\numberwithin{equation}{section}
\newtheorem{theo}{Theorem}[section]
\newtheorem{assum}[theo]{Assumption}
\newtheorem{lemma}[theo]{Lemma}
\newtheorem{prop}[theo]{Proposition}
\newtheorem{corol}[theo]{Corollary}
\newtheorem{definition}[theo]{Definition}
\date{\today}
\def\beal{\begin{align*}}
\def\eeal{\end{align*}}
\def\beq{\begin{equation}}
\def\eeq{\end{equation}}
\def\beqa{\begin{eqnarray}}
\def\eeqa{\end{eqnarray}}
\def\bpm{\begin{pmatrix}}
\def\epm{\end{pmatrix}}
\def\cC{{\cal C}}
\def\cE{{\cal E}}
\def\cF{{\cal F}}
\def\cG{{\cal G}}
\def\cL{{\cal L}}
\def\cZ{{\cal Z}}
\def\eps{\varepsilon}
\def\tJ{{\tilde{J}}}
\def\hpsi{\psi}
\def\spsi{\tilde{\psi}}
\def\ov{\overline{v}}
\def\hlam{\hat{\lambda}}
\def\oQ{\overline{Q}}
\def\valpha{\vec{\alpha}}
\def\vk{\vec{k}}
\def\vv{\vec{v}}
\def\vw{\vec{w}}
\def\bN{{\bf N}}
\def\bT{{\bf T}}
\def\mbbR{\mathbb{R}}
\def\RR{\mathbb{R}}
\def\LL{\mathbb{L}}
\def\NN{\mathbb{N}}
\def\fil{{{\it f}}}
\newcommand{\Frac}[2]{\frac{\textstyle #1}
                           {\textstyle #2}}
\newcommand{\leavethisout}[1] {}
\newcommand{\norm}[1]{\left\lVert#1\right\rVert}
\newcommand{\draft}[1]{}
\numberwithin{equation}{section}
\newcommand\bt[1]{\begin{tabular}{#1}}
\newcommand\et{\end{tabular}}
\def\bc{\begin{center}}
\def\ec{\end{center}}
\newtheorem{Theorem}{Theorem}
\newtheorem{Def}[Theorem]{Definition}
\def\vkappa{\vec{\kappa}}
\def\vz{\vec{z}}
\def\Hbl{H}
\def\bL{\mathbb{L}}
\def\RR{\mathbb{R}}
\def\LL{\mathbb{L}}
\begin{document}

\title{Pearling Bifurcations in the strong Functionalized Cahn-Hilliard Free Energy}

\author{Noa Kraitzman\thanks{Department of Mathematics, University of Utah, noa@math.utah.edu} \,\&\, Keith Promislow\thanks{Department of Mathematics, Michigan State University, kpromisl@math.msu.edu}}

\maketitle
\begin{abstract}
The Functionalized Cahn-Hilliard free energy supports phase separated morphologies of distinct codimension, including codimension-one
bilayer and codimension-two filament morphologies. We characterize the linear stability of bilayer and filament morphologies associated to
hypersurfaces within the strong functionalization scaling. In particular we show that the onset of the pearling instability, which
triggers fast in-plane oscillations associated to bifurcation to higher codimensional morphology, is controlled by the functionalization parameters
and the spatially constant value of the far-field chemical potential.  Crucially, we show that onset of pearling is independent of the shape
of the defining  hypersurface.
\end{abstract}

\section{Introduction}\label{Sec:FCHFE}

Models of amphiphilic materials date to the empirical analysis  of Teubner and Strey \cite{teubner1987origin} and Gompper and Schick \cite{gompper1990correlation} who
studied the small-angle x-ray scattering (SAXS) data of microemulsions of oil, water, and surfactant. For regimes with a predominance of water and a minority phase
comprised of surfactant and oil, they arrived at a free energy landscape
for the surfactant-oil volume fraction $u\in H^2(\Omega)$
\beq\label{GS-energy}
\cF_{\rm GS}(u) = \int_\Omega \frac{\eps^4}{2} |\Delta u|^2 - \eps^2 G_1(u)\Delta u + G_2(u)\, dx,
\eeq
where the function $G_1$ encodes the material amphiphilicity, taking distinct signs in the surfactant-oil phase $u\approx 1$ and in the water phase $u\approx 0$,
and $G_2$ describes the free energy of spatially homogeneous blends.  The parameter $\eps\ll1$ scales homogeneously with space and
denotes the ratio of the length of the amphiphilic (surfactant) molecule to the domain size $\Omega\subset{\RR}^d$.
Completing the square in $\Delta u$ leads to an equivalent formulation
\beq
\label{GS-energy-sq}
\cF_{\rm GS}(u) = \int_\Omega \frac12 \left( \eps^2\Delta u -G_1(u)\right)^2  +P(u)\, dx,
\eeq
where the residual term $P(u) := G_2(u)-\frac12 G_1^2(u)$. The Functionalized Cahn-Hilliard
(FCH) free energy corresponds to the special case in which the residual is asymptotically small with respect to $\eps$
and the term  $G_1(u)=W'(u)$ where $W$  is a double-well potential with minima at $u=b_\pm$
whose unequal depths are normalized so that $W(b_-)=0>W(b_+)$ and is non-degenerate in the sense that $\alpha_\pm:=W''(b_\pm)>0$.
Here $u=b_-$ is associated to a bulk solvent phase, while the quantity $u-b_->0$ is proportional to the density of the amphiphilic phase. More specifically
we consider the distinguished limit in which the residual term scales as $\eps^p$, and the resulting form is a linear combination of the ``quadratic'' and the ``functionalization'' terms,
\beq\label{e:FCHFE}
 \cF(u) := \int\limits_\Omega \frac {1}{2} \left(\eps^2\Delta u-W^\prime(u) \right)^2-
 \eps^p \left(\frac{\eps^2\eta_1}{2}|\nabla u|^2+\eta_2W(u)\right)\, dx.
 \eeq
 The dominant quadratic term corresponds to the square of the variational derivative of a Cahn-Hilliard free energy, \cite{Cahn1958free},
 \beq\label{e:CHFE}
 \cE(u) = \int_\Omega \frac{\eps^2}{2}|\nabla u|^2 + W(u)\, dx,
 \eeq
 whose \emph{minimizers} over $H^2(\Omega)$, subject to prescribed volume fraction, are related to minimal surfaces.
 The zeros of the quadratic term within the FCH are precisely the \emph{critical points}, in particular the saddle points,
 of the associated Cahn-Hilliard free energy $\cE$: the minimization of the FCH free energy
 is achieved by searching for the critical points of the Cahn-Hilliard free energy $\cE$ which minimize the functionalization terms of $\cF$.
 The functionalization terms, parameterized by $\eta_1>0$ and $\eta_2\in\RR$  are analogous to the surface and
volume energies typical of models of charged solutes in confined domains, see \cite{scherlis2006unified} and particularly equation (67) of \cite{andreussi2012revised}.
The minus sign in front of $\eta_1$ is of considerable significance --  it incorporates the propensity of the
amphiphilic surfactant phase to drive the creation of interface. Indeed, experimental tuning of solvent quality
shows that  morphological instability in amphiphilic mixtures is associated to (small) negative values of
surface tension, \cite{zhu2009tuning} and \cite{zhu2012interfacial}. There are two natural choices for the exponent $p$ in the functionalization terms.
In the strong functionalization, $p=1$, the functionalization terms dominate the Willmore corrections from the squared variational term.
The weak functionalization, corresponding to $p=2$, is the natural scaling for the $\Gamma$-limit as the curvature-type
Willmore terms appear at the same asymptotic order as the functional terms. In this paper we focus on the strong scaling of the FCH free energy.

For a cubical domain $\Omega=[0,L]^d\subset{\RR}^d$ subject to periodic boundary conditions, the first variation of~$\cF$ at $u\in H^4(\Omega)$,
is denoted by the chemical potential $\mu$ and takes the form
\begin{align}\label{e:mu}
	\mu:=\frac{\delta \cF}{\delta u}(u) &= (\eps^2\Delta-W''(u)+\eps\eta_1)(\eps^2\Delta u-W'(u))+\eps\eta_dW'(u),
\end{align}
where~$\eta_d:=\eta_1-\eta_2$.  The Functionalized Cahn-Hilliard equation arises as the $H^{-1}$ gradient flow of the FCH free energy
\beq
\label{e:FCH-eq}
u_t = \Delta \mu(u),
\eeq
and is typically considered in conjunction with periodic boundary conditions on the cubical domain $\Omega$.
This work is the first of a two-part effort that addresses the slow, curvature-driven dynamics and the linear stability of families
of codimension one bilayer and codimension two filament morphologies.  In this paper we construct families of admissible bilayer and filament
morphologies and address their linear stability with respect to the pearling bifurcation within the context of the strong FCH gradient flow,
(\ref{e:FCH-eq}).  Admissibility, defined rigorously in Definition\,\ref{def:admissible} and \ref{def:admissible-2}, most significantly requires that
the morphology is sufficiently far from self-intersection. In the companion paper, \cite{Christlieb2017Competition} we present a multiscale analysis which shows that well separated
filament and bilayer morphologies evolve according to a competitive quenched mean-curvature driven flow mediated through the common value
of the spatially constant far-field chemical potential, $\mu$.  The far-field value of the chemical potential serves as a bifurcation parameter, potentially triggering two classes of instabilities for each codimension of morphology.
Indeed, in \cite{HAYRAPETYAN2014SPECTRA} it was shown rigorously for the weak FCH that the pearling instabilty and geometric meander, also called the fingering instability, are the only possible instabilities for the
bilayer morphology. The pearling instability is the focus of this paper, when triggered it is manifest on a fast $O(\eps)$ time-scale and
leads to a periodic modulation of the width of the corresponding bilayer or filament morphology.  Conversely, the geometric meander drives the
shape of the underlying morphology on the slow $O(\eps^{-1})$ time-scale, and leads either to motion by curvature or motion against curvature, depending upon the value of the far-field chemical potential, $\mu$,
measured against a morphology specific reference value.  Motion by curvature decreases surface area, while motion against curvature
induces buckling or meander type evolution that if unchecked generically leads to self-intersection of the underlying morphology and the
generation of finger like protrusions. Both the pearling instability and the onset of motion against curvature are typically associated to
the transition of a morphology to a higher codimention, although pearled morphologies have been shown to exist as equilibrium when the
underlying interface is circular or flat, \cite{PW-15}. Significantly, the geometric evolution couples to the pearling
bifurcation as the growth of interface leads to a change in the far-field value of $\mu$ on the slow time scale, which we show can trigger the pearling bifurcation, see Figure\,\ref{f:Manpearl} for an example of motion against curvature triggering a pearling bifurcation in a simulation of the FCH equation (\ref{e:FCH-eq}).

\begin{figure}[h!]
\begin{center}
\begin{tabular}{ccc}
    \includegraphics[width=2.0in]{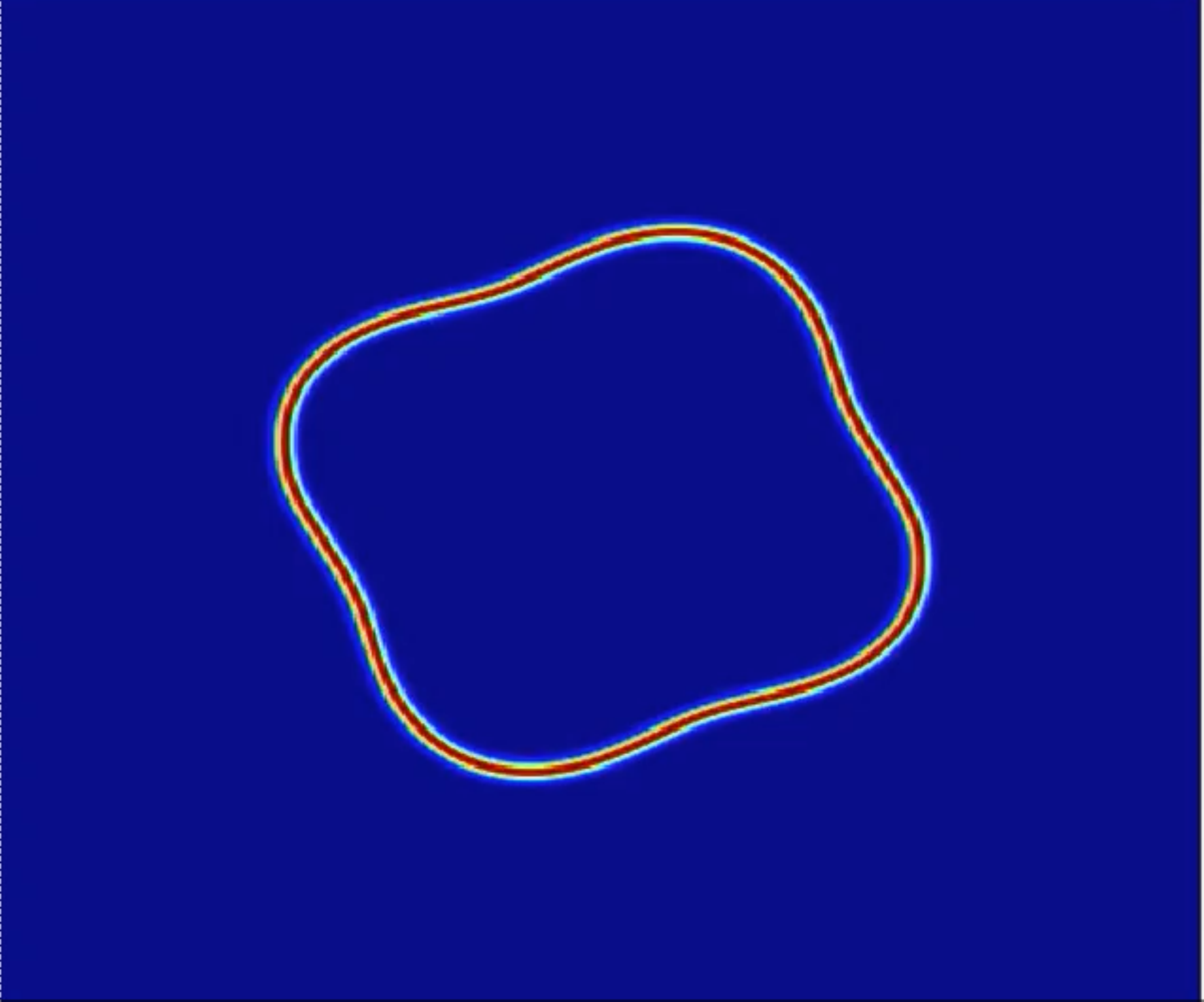}&
    \includegraphics[width=2.0in]{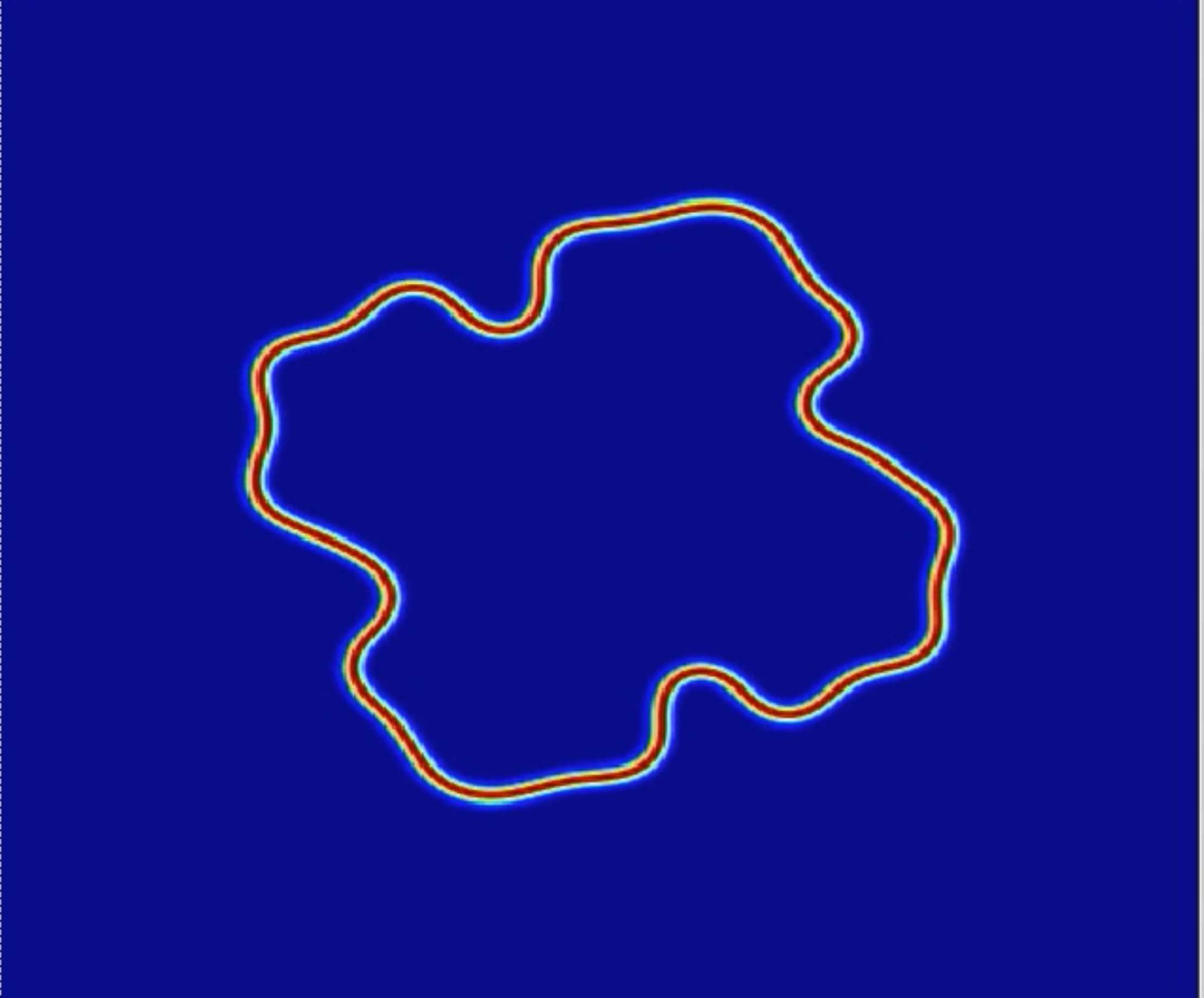}&
      \includegraphics[width=2.0in]{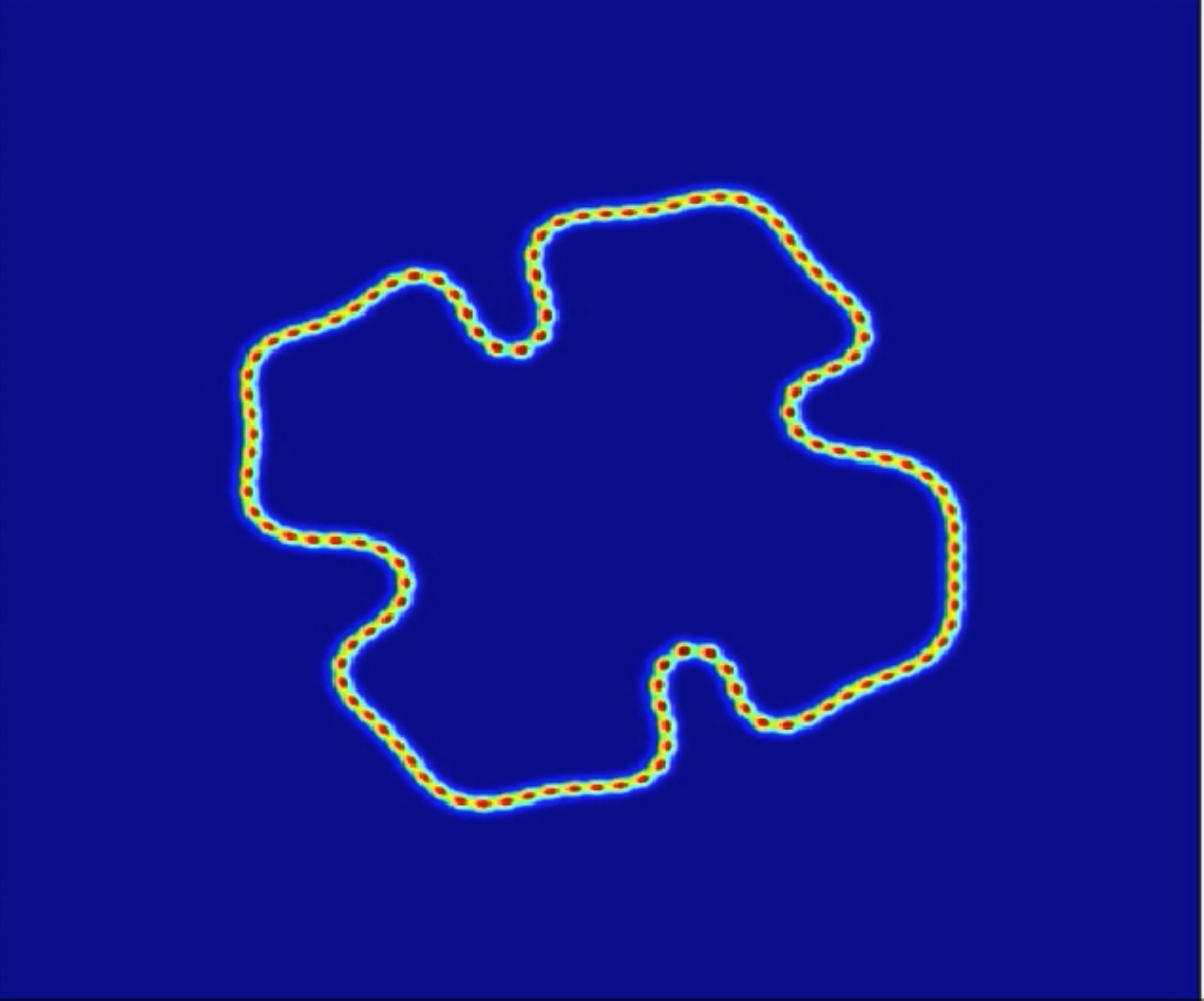}
  \end{tabular}
  \end{center}
\caption{Simulations of of the strong FCH gradient flow, (\ref{e:FCH-eq}) on a 2D domain. The initial data (left) is a codimension one bilayer morphology with an elevated value of the far field chemical potential corresponding to motion against curvature. The result is a meandering evolution that lengthens the curve and reduces the far field chemical potential (center), eventually triggering a pearling instabilty, (right).}
 \label{f:Manpearl}
\end{figure}


In this paper we investigate the spectrum of the linearization of the right-hand side of the FCH equation about bilayer and filament morphologies
constructed from admissible hypersurfaces, and determine the dependence of the spectrum upon the functionalization parameters and the
spatially constant value of the far-field chemical potential.  Within the context of our construction the far-field chemical potential is proportional to the
far-field density of free amphiphilic material, $u-b_-$, and serves as the long-range communication mechanism between the spatially localized
bilayer and filament morphologies. The linearization of the FCH equation about a profile $u\in H^4(\Omega)$ takes the form $\Delta \LL$, where the second variational derivative of the FCH free energy takes the form
\beq
\label{e:LL}
	\LL:=\frac{\delta^2\cF}{\delta u^2}(u) = \left(\eps^2\Delta-W''(u)+\eps\eta_1\right)\left(\eps^2\Delta-W''(u)\right)-\left(\eps^2\Delta u-W'(u)\right)W'''(u)+\eps\eta_dW''(u).
\eeq
The spectrum of both $\LL$ and $\Delta \LL$ are purely real, and small eigenvalues of $\Delta \LL$ have an
explicit mapping onto those of $\LL$, see \cite{Doelman2014meander}, so that the bifurcation of unstable modes in $\Delta \LL$ is controlled by
the zero crossing of eigenvalues in $\LL$. The central result of this paper is a rigorous analysis of the eigenvalue problem
 \beq
    \LL\Psi = \Lambda\Psi,
 \eeq
 about bilayer and filament morphologies in the context of the strong FCH free energy. In particular, we establish explicit pearling stability conditions for both bilayers in $\RR^d$ and filaments in $\RR^3$ that apply uniformly to  all admissible morphologies. This extends the results of \cite{Doelman2014meander}, which considered the case of constant single-curvature bilayer morphologies that are exact equilibria of the FCH system. The key obstacle to this extension is to retain estimates that are uniformly valid over the of pearling eigenvalues that are both asymptotically large in number and asymptotically close together as they interact through the nonconstant interfacial curvatures.

 Each bilayer morphology is associated to a codimension one interface $\Gamma_b$ embedded in $\Omega$.  Within the reach of
 the interface, the $\eps$-scaled signed-distance $z$ to the interface is well defined and smooth, and the leading order bilayer profile is defined as the solution $\phi_b$ of the second order equation
\beq
\label{e:BLcp}
 \partial_z^2 \phi_b = W'(\phi_b),
 \eeq
 which is homoclinic to the left well $b_-$ of $W$. The leading-order bilayer morphology $U_b=U_b(\cdot;\Gamma_b)\in H^4(\Omega)$
 associated to $\Gamma_b$, also called the \emph{dressing} of $\Gamma_b$ with $\phi_b$, takes the values $U_b(x)=\phi_b(z(x))$
 within the reach and is extended smoothly to the constant value $b_-$ on $\Omega_\pm$, see Figure\,\ref{f:SpectrumBilayer} (left) for a graphical depiction and Section\,\ref{sec:coordinates} for a precise definition. The second variation of the FCH, given in (\ref{e:LL}), evaluated at $u=U_b$, is denoted $\LL_b$. It inherits considerable structure from the form of the FCH free energy; indeed it is a perturbation of the square of self-adjoint operator. To illuminate this structure we introduce the linearization of (\ref{e:BLcp}) about $\phi_b$,
 \beq
 \label{e:L0-def}
 L_{b,0}:=\partial_z^2 - W''(\phi_b).
 \eeq
When acting on functions with support within the reach of $\Gamma_b$, it is instructive to think of the operator $\LL_b$ in the form
\beq
\LL_b =  (L_{b,0}+\eps^2 \Delta_s)^2 + O(\eps),
\eeq
where $\Delta_s$ is the Laplace-Beltrami operator associated to $\Gamma_b$. The operator $\LL_b$ is positive except
where the positive eigenspaces of $L_{b,0}$ balance against the negative Laplace-Beltrami operator, in which case the $O(\eps)$
perturbations become relevant. Viewed as on operator on $L^2(\RR)$, $L_{b,0}$ is Sturmian with a positive ground state eigenvalue $
\lambda_{b,0}>0$ and eigenfunction $\hpsi_{b,0}>0$ and a translational eigenmode $\lambda_{b,1}=0$ associated to the eigenfunction $
\hpsi_{b,1}$ which is a rescaling of the translational mode $\phi_b^\prime.$  In \cite{HAYRAPETYAN2014SPECTRA} it was shown that for each
set of admissible codimension-one interfaces there exists $\sigma_K>0$, which may be chosen independent of $\eps>0$  such that the set of
eigenmodes associated to $\bL_b$ with eigenvalue $\lambda<\sigma_K$ is finite dimensional and is composed of two subsets, the
\emph{pearling eigenmodes} $\{\Psi_{b;0,n}\}_{n=N_1}^{N_2}$ and the \emph{meander eigenmodes} $\{\Psi_{b;1,n}\}_{n=0}^{N_3}$.
Moreover these eigenmodes are localized within the reach of $\Gamma_b$ and at leading order are a tensor product of an eigenmode of
$L_{b,0}$ times an eigenmode $\Theta_n$ associated to the Laplace-Beltrami operator, $-\Delta_s$ of interface $\Gamma_b$.
 \begin{figure}[h!]
\begin{center}
\begin{tabular}{cc}
    \includegraphics[width=2.7in]{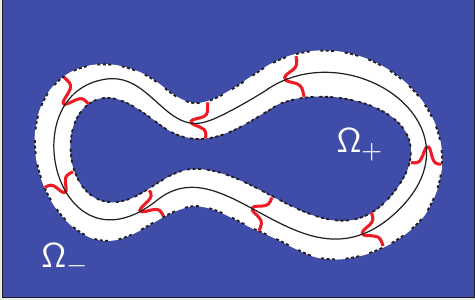} &
    \includegraphics[width=3.3in]{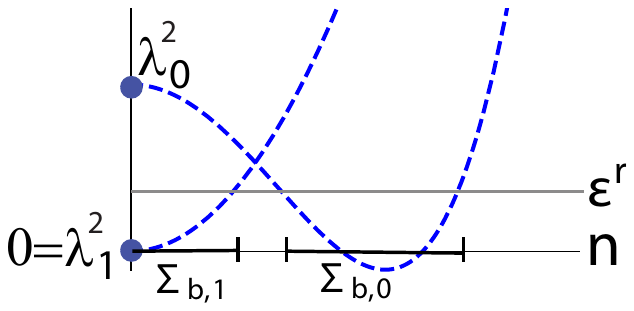}
\end{tabular}
  \end{center}
\caption{(Left) An admissible codimension one interface $\Gamma_b$, denoted by the thin black line, with its reach denoted in white. The remainder of $\Omega$ is comprised of the domains $\Omega_\pm$ exterior to the reach. The dressing of $\Gamma_b$ with the bilayer profile $\phi_b$, is sketched via the red curves within the reach. (Right)  A depiction of the pearling, $\Sigma_{b,0}(r)$, and meander, $\Sigma_{b,1}(r)$ index sets associated to the spectrum of $\LL_b$. The vertical and horizontal axis denote the eigenvalue (real) and the Laplace-Beltrami
wave-number $n$. These center-unstable spectra control the geometric dynamics and triggers the pearling instabilities of bilayer interfaces.}
 \label{f:SpectrumBilayer}
\end{figure}
That is the eigenmodes take the form
\beq
\label{e:BL-tensor}
     \Psi_{b; j,n} = \hpsi_{b,j}(z)\Theta_n(s) +O(\eps),
 \eeq
for $j=0, 1$ and $n$ ranging over the index set $\Sigma_{b,j}(r)$ defined via the Laplace-Beltrami eigenvalues $\beta_n\geq 0$
associated to $-\Delta_s$,
\beq
\label{e:SigmaBl}
\Sigma_{b,j}(r):= \left\{n\in\NN_+\,\bigl|~(\lambda_{b,j}-\eps^2\beta_n)^2\leq \eps^r \right\},\qquad j=0,1,
\eeq
for some fixed $r\in(0,1)$. The value of $r$ controls both the size of the index sets and the coercivity of $\LL_b$ on the orthogonal
complement of the associated linear spaces; the determination of its optimal value is the culmination of the technical analysis of this paper.
The pearling spectrum, associated to $\Sigma_{b,0}(r)$, arises through the balance of the  ground-state eigenvalue $\lambda_{b,0}>0$
of $L_{b,0}$ against appropriately large Laplace-Betrami wave number eigenspaces of the Laplace-Beltrami operator, and induces the
oscillations of the bilayer width seen in Figure\,\ref{f:Manpearl} (right).  The meander spectrum, associated to $\Sigma_{b,1}(r)$, corresponds to
lower Laplace-Beltrami wave numbers that balance the translational eigenvalue $\lambda_{b,1}=0$, and governs the slow geometric
motion of the underlying interface, evidenced in the transition from Figure\,\ref{f:Manpearl} (left) to (center). Weyl's asymptotics for the Laplace-Beltrami eigenvalues  establish that $\beta_n\sim n^{\frac{2}{d-1}},$ so that $N_{b,j}:=|\Sigma_{b,j}(r)|$ grows as a negative power of $\eps>0$. Indeed, there exist  constants $a_0,b_0\in\RR_+$ independent of $\eps$ such that $n\in\Sigma_{b,0}(r)$ satisfy
\beq
\eps^{1-d}(a_0-b_0\eps^{r(d-1)})\leq n \leq \eps^{1-d}(a_0+b_0\eps^{r(d-1)}),
\label{Nj0-asymp}
\eeq
while $n\in\Sigma_{b,1}(r)$ satisfy $0<n<\eps^{(1-d)(1-\frac{r}{4})}b_1,$ for some $b_1>0.$ In particular, $\Sigma_{b,0}(r)$ and $\Sigma_{b,1}(r)$
are disjoint for $\eps$ sufficiently small and $r>0$, see Figure\,\ref{f:SpectrumBilayer} (right).

 In \cite{Doelman2014meander} bilayer equilibria of the strong FCH where explicitly constructed as dressings of single-curvature codimension-one interfaces in $\RR^d$,
 corresponding to cylinders and spheres in $\RR^3$. The onset of both the pearling and the meander instability where characterized in terms of the functionalization parameters,  with the analysis of the meander instability hinging on a very degenerate, higher-order expansion. In \cite{PW-15},  it was shown that the onset of the pearling instability for constant curvature interfaces in $\RR^2$ is associated to the creation of a pearled bilayer equilibrium, modulo a non-degeneracy condition.
 Conversely, for single curvature bilayer equilibria in $\RR^2$, that is the dressings of radially symmetric interfaces, it was shown in \cite{HAYRAPETYAN2016Nonlinear} that for the weak FCH the assumption of pearling stability implies the full nonlinear stability of the underlying radially symmetric bilayer, and that the relaxation to the underlying equilibrium circle is along the family of bilayer morphologies constructed herein following a geometric flow corresponding to a linearized version of the mass-preserving Willmore flow derived formally in \cite{dai2013geometric}.

We construct bilayer morphologies obtained as dressings of admissible interfaces for which the far-field
chemical potential is specified. These are approximate solutions of
\beq\label{e:FCH-static}
\mu(u)=\eps\hlam,
\eeq
where the scaled $\hlam\in\RR$ is simultaneously the Lagrange multiplier associated to prescribed total mass and the kernel of the Laplacian prefactor in (\ref{e:FCH-eq}).   The approximate solutions incorporate corrections to the bilayer morphology and take the form
\beq \label{e:ub-exp}
 u_b (x;\Gamma_b,\hlam) = U_b(x)+ \eps U_{b,1}(x),
 \eeq
where $U_{b,1}=U_{b,1}(\cdot;\Gamma_b,\hlam)$ includes a far-field modification to the value of $u_b$
\beq \label{e:ub-ff}
u_{b}(x) = b_-+ \eps \Frac{\hlam}{\alpha_-^2},
\eeq
outside of the reach of $\Gamma_b$. These bilayer morphologies are quasi-equilibria of (\ref{e:FCH-eq}) and include the equilibrium
solutions considered in \cite{Doelman2014meander}, however approximation of the asymptotically large sets of pearling and meander
eigenvalues is complicated by the coupling induced between the eigenvalues due to the non-constant curvatures of the admissible interfaces.
This result, established in section \ref{Sec:Pearling-Bl}, is summarized below and given in more precision in Theorem\,\ref{PB-PearlingCond}.

\begin{theo}\label{Thm:Main-bl}
Fix the functionalization parameters $\eta_1>0$ and $\eta_2\in\RR$, and double well $W$ with the associated constant $S_b$ defined in (\ref{e:S-bl}).
Then for any family of admissible codimension-one interfaces,  see Definition \ref{def:admissible}, there exists $\eps_0>0$ such that
for all $\eps\in(0,\eps_0)$ the bilayer morphology, $u_b(\cdot;\Gamma_b, \hlam),$ constructed  in~(\ref{e:ub-exp}) by the dressing of an admissible codimension-one interface, $\Gamma_b$,
is spectrally stable with respect to pearling if and only if the scaled Lagrange multiplier~$\hlam$ introduced in (\ref{e:FCH-static}),
satisfies the bilayer pearling stability condition
\beq\label{e:pearl-cond-bl}
 \hlam S_b + \lambda_{b,0}(\eta_1-\eta_2)\|\hpsi_{b,0}\|_{L^2(\RR)}^2<0,
\eeq
where $\psi_{b,0}$ is the ground state eigenfunction of the operator $L_{b,0}$ introduced in (\ref{e:L0-def}) with eigenvalue $\lambda_{b,0}>0.$
The quantity $\hlam$ is related to the far-field value of the bilayer morphology through (\ref{e:ub-ff})
\end{theo}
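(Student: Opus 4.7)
The plan is a Lyapunov--Schmidt reduction of the eigenvalue problem $\LL_b\Psi=\Lambda\Psi$ onto the pearling subspace
\[
X_{b,0}:=\mathrm{span}\bigl\{\hpsi_{b,0}(z)\Theta_n(s)~:~ n\in\Sigma_{b,0}(r)\bigr\},
\]
followed by an explicit expansion of the resulting reduced matrix problem to order $\eps$. The reduction is motivated by the structural decomposition
\[
\LL_b = (L_{b,0}+\eps^2\Delta_s)^2 + \eps\mathcal{R}_1 + \eps^2\mathcal{R}_2 + \ldots
\]
valid on functions supported within the reach of $\Gamma_b$: the squared block has near-kernel exactly $X_{b,0}$, while $\mathcal{R}_1$ collects the first-order corrections, including the multipliers $\eta_1$ and $\eta_d$ from (\ref{e:LL}), the curvature-induced $\partial_z$-couplings produced by expanding $\eps^2\Delta-\partial_z^2-\eps^2\Delta_s$, and the $\hlam$-dependence inherited from the far-field correction $U_{b,1}$ in (\ref{e:ub-exp}).

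The first step is to establish coercivity of $\LL_b$ on the orthogonal complement of $X_{b,0}\oplus X_{b,1}$ with a spectral gap asymptotically larger than $\eps$, adapting the decomposition of \cite{HAYRAPETYAN2014SPECTRA} (developed there for the weak scaling) to the present strong-scaling regime. A Schur-complement argument then reduces the pearling eigenvalue problem to a symmetric matrix $\mathbf{M}_b\in\RR^{N_{b,0}\times N_{b,0}}$ acting on the coefficient vector $(c_n)_{n\in\Sigma_{b,0}(r)}$. Substituting the two-scale ansatz $\Psi=\hpsi_{b,0}(z)\Theta_n(s)+\eps\Psi_{n,1}(x)+\ldots$ into $\LL_b\Psi=\Lambda\Psi$ and matching the $O(1)$ and $O(\eps^2)$ orders identifies the leading diagonal entry of $\mathbf{M}_b$ as $(\lambda_{b,0}-\eps^2\beta_n)^2$. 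Imposing the Fredholm solvability of the $O(\eps)$ equation against $\hpsi_{b,0}\Theta_n$ produces a common additive shift of the form $\eps\,\Pi$, with
\[
\Pi = -\bigl[\hlam S_b + \lambda_{b,0}(\eta_1-\eta_2)\|\hpsi_{b,0}\|_{L^2(\RR)}^2\bigr],
\]
where the first summand packages the $\hlam$-dependence entering through $U_{b,1}\sim\hlam/\alpha_-^2$ into the constant $S_b$ defined at (\ref{e:S-bl}), and the second assembles the bulk pairings of $\eta_1 L_{b,0}$ and $\eta_d W''(\phi_b)$ against $\hpsi_{b,0}$.

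Next I would control the off-diagonal entries of $\mathbf{M}_b$, which arise from geometric coefficients in the Laplace--Beltrami expansion and from weights in the change of variables between $(z,s)$ and $x$. Using the definite parity of $\hpsi_{b,0}$ in $z$ against the odd-in-$z$ normal-curvature coefficients, together with orthogonality of the $\Theta_n$, the off-diagonal entries of $\mathbf{M}_b$ are $o(\eps)$ uniformly in $n,m\in\Sigma_{b,0}(r)$, so that
\[
\mathbf{M}_b = \mathrm{diag}\bigl((\lambda_{b,0}-\eps^2\beta_n)^2\bigr) + \eps\,\Pi\, I + o(\eps).
\]
By Weyl's law and (\ref{Nj0-asymp}) the infimum of $(\lambda_{b,0}-\eps^2\beta_n)^2$ over $n\in\Sigma_{b,0}(r)$ is $O(\eps^{2(d-1)})=o(\eps)$; hence the sign of the smallest eigenvalue of $\mathbf{M}_b$ coincides with the sign of $\Pi$, which is precisely the pearling stability condition (\ref{e:pearl-cond-bl}).

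The main obstacle is the uniform $o(\eps)$ bound on off-diagonal coupling in the presence of $O(\eps^{1-d})$ nearly-degenerate diagonal entries, where standard mode-by-mode perturbation theory breaks down because consecutive near-kernel eigenvalues of the squared block are separated by gaps asymptotically smaller than $\eps$. The technical heart of the argument is therefore a uniform matrix estimate exploiting curvature-averaging against $\hpsi_{b,0}^2(z)$, with the exponent $r\in(0,1)$ tuned to simultaneously secure the coercivity gap on $(X_{b,0}\oplus X_{b,1})^\perp$ and absorb the residual geometric couplings.
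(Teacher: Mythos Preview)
Your outline is the paper's strategy: block-decompose $\LL_b$ over $X_b=X_{b,0}\oplus X_{b,1}$ and its $L^2$-complement, invoke the coercivity of $\LL_b$ on $X_b^\perp$ from \cite{HAYRAPETYAN2014SPECTRA} to reduce to a finite symmetric matrix, expand the pearling diagonal to $\eps\bigl(P_{k,0}^2+\Pi\bigr)$ via the $O(\eps)$ correction $\LL_1$, and conclude once the off-diagonal part is shown to be $o(\eps)$ in operator norm.

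The one place the sketch is genuinely loose is the implication ``the off-diagonal entries of $\mathbf{M}_b$ are $o(\eps)$ uniformly in $n,m$, so that $\mathbf{M}_b=\mathrm{diag}+\eps\Pi I+o(\eps)$.'' An $l^\infty$ entry bound of size $\delta$ on an $N_{b,0}\times N_{b,0}$ matrix only controls the $l^2$-operator norm by $N_{b,0}\,\delta$, and here $N_{b,0}\sim\eps^{1-d}$, so entrywise $o(\eps)$ is far from sufficient. You flag this at the end, but the resolution is not ``curvature-averaging.'' The paper's Lemma~\ref{l:bdM0} observes that after the parity cancellations you describe, the surviving $O(\eps^2)$ off-diagonal entries have the structured form
\[
\eps^2\!\int_{\Gamma_b} f(\vec{k}_b(s))\,\Theta_j(s)\Theta_k(s)\,J_0\,ds,
\]
i.e.\ they are the matrix elements, in the Laplace--Beltrami eigenbasis, of multiplication by the bounded function $\eps^2 f(\vec{k}_b)$. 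A single application of H\"older then bounds the $l^2$-operator norm by $\eps^2\|f(\vec{k}_b)\|_{L^\infty}$, \emph{independently of $N_{b,0}$}. The unstructured $O(\eps^{5/2})$ remainders are absorbed by the crude $l^\infty\!\to l^2_*$ estimate of Lemma~\ref{l-MCor}, which still gives $o(\eps)$ after paying the $\eps^{-q_*}$ factor; and the pearling--meander cross block $M^{0,1}$ needs a separate integration-by-parts argument exploiting the spectral gap between $\Sigma_{b,0}(r)$ and $\Sigma_{b,1}(r)$ (the second half of Lemma~\ref{l:bdM0}). Once you make this multiplication-operator structure explicit, and tune $r=r_b^*$ to balance the coercivity gap against the Schur-complement error as in (\ref{e:M-cLb}), your argument coincides with the paper's.
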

In the analysis of \cite{Doelman2014meander}, a family
of double-wells was considered for which the constant $S_b$ was uniformly positive; numerical evaluation of $S_b$ shows that it can be of
either sign, and the switching of the sign of $S_b$ has a significant impact on association of pearling stability regimes with the far-field chemical potential, and is addressed in detail in the companion paper, \cite{Christlieb2017Competition}.

We also construct filament morphologies from dressings of codimension-two hypersurfaces, and derive associated
pearling stability conditions. This however requires two assumptions. The first, Assumption\,\ref{CS-Assum:Kernel}, specifies the size of the
positive eigenspace and the kernel of the linearization restricted to two subspaces. For the codimension-one case, this assumption is naturally
satisfied through the Sturm-Liouville theory. The second assumption, Assumption\,\ref{CS-Assum:coercive},  asserts the uniform coercivity
of the linearization on the perpendicular of the tensor product space, $X_\fil$, defined in (\ref{e:Xfil}), associated to the codimension-two pearling
and meander eigenvalues. This result was rigorously established for the codimension-one case in \cite{HAYRAPETYAN2014SPECTRA}, but is
outside the scope of this paper. Modulo these assumptions, we establish in section \ref{Sec:Pearling-Pr} that the sign of the pearling and
meander eigenvalues can be characterized for an entire class of admissible codimension-two filaments in terms of the functionalization parameters and the far-field chemical potential. This result is summarized below and given in full precision in Theorem\,\ref{PP-PearlingCond}.

\begin{theo}\label{Thm:Main-pr}
Fix the functionalization parameters $\eta_1>0$ and $\eta_2\in\RR$, and double well $W$ with associated constant $S_\fil$ defined in
(\ref{e:S-pr}), for which Assumptions \ref{CS-Assum:Kernel} and \ref{CS-Assum:coercive} hold. Then for any family of admissible codimension-two filaments, see Definition \ref{def:admissible-2}, there exists $\eps_0>0$ such that for all $\eps\in(0,\eps_0)$ the filament morphology
constructed  in~(\ref{e:PR}) by the dressing of an admissible codimension-two hypersurface,
are spectrally stable with respect to pearling if and only if the scaled Lagrange multiplier $\hlam$, introduced in (\ref{e:FCH-static}),
satisfies the filament pearling stability condition
\beq\label{e:pearl-cond-pr}
 \hlam S_\fil + (\eta_1-\eta_2)\left(\norm{\hpsi^\prime_{\fil,0}}_{L^2(\RR)}^2+\lambda_{\fil,0}\norm{\hpsi_{\fil,0}}_{L^2(\RR)}\right) <0,
\eeq
where $\psi_{\fil,0}$ is the ground state eigenfunction of the operator $L_{\fil,0}$ introduced in (\ref{e:Lf0-def}) with eigenvalue $\lambda_{\fil,0}>0.$
\end{theo}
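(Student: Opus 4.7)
The plan is to mirror, in the codimension-two setting, the strategy used for Theorem \ref{Thm:Main-bl}: construct a quasi-equilibrium filament morphology whose far-field value encodes $\hlam$, expand the linearization $\LL_\fil:=\delta^2\cF/\delta u^2(u_\fil)$ in a local tubular frame, perform a Lyapunov--Schmidt reduction onto the finite-dimensional pearling eigenspace $X_\fil$ guaranteed by Assumptions \ref{CS-Assum:Kernel}--\ref{CS-Assum:coercive}, and then read the sign of the smallest eigenvalue of the reduced matrix. First I would set up whiskered coordinates $(s,\vec z)$ in an $\eps$-scaled tube around the codimension-two filament $\Gamma_\fil\subset\RR^3$, with $s$ an arclength variable along the curve and $\vec z\in\RR^2$ a scaled transverse variable. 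In this frame the leading profile $\phi_\fil(|\vec z|)$ solves the radial ground-state equation $\Delta_{\vec z}\phi_\fil=W'(\phi_\fil)$ on $\RR^2$, and the linearization decomposes as $\LL_\fil=(L_{\fil,0}+\eps^2\partial_s^2)^2+\eps\,\cM+O(\eps^2)$ within the reach, where $L_{\fil,0}=\Delta_{\vec z}-W''(\phi_\fil)$ and $\cM$ collects the curvature corrections, the $\eta_1,\eta_d$ contributions in (\ref{e:LL}), and the contribution from the first correction $U_{\fil,1}$ that enforces the far-field value (\ref{e:ub-ff}) with parameter $\hlam$.

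Next I would identify the pearling eigenspace. By Assumption \ref{CS-Assum:Kernel} the positive part of $L_{\fil,0}$ on $L^2(\RR^2)$ consists of the radially symmetric ground state $\hpsi_{\fil,0}$ with $L_{\fil,0}\hpsi_{\fil,0}=\lambda_{\fil,0}\hpsi_{\fil,0}$, and the translational modes at $\lambda=0$ are spanned by $\partial_{z_1}\phi_\fil,\partial_{z_2}\phi_\fil$. Writing $\{\Theta_n\}$ for the eigenfunctions of $-\partial_s^2$ on $\Gamma_\fil$ with eigenvalues $\beta_n$, the pearling index set is $\Sigma_{\fil,0}(r):=\{n\in\NN_+\,|\,(\lambda_{\fil,0}-\eps^2\beta_n)^2\leq\eps^r\}$; by Weyl's law this set contains $O(\eps^{-1})$ modes clustered near $\beta_n\sim\lambda_{\fil,0}/\eps^2$. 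The tensor product modes $\Psi_{\fil;0,n}=\hpsi_{\fil,0}(\vec z)\Theta_n(s)+O(\eps)$ span $X_\fil$. Assumption \ref{CS-Assum:coercive} yields the uniform lower bound $\LL_\fil\geq c\eps^r$ on $X_\fil^{\perp}$, so the full spectrum below $\eps^r$ is captured by the reduced matrix $M\in\RR^{N_\fil\times N_\fil}$ with entries $M_{nm}=\langle\Psi_{\fil;0,n},\LL_\fil\Psi_{\fil;0,m}\rangle$, up to an $O(\eps^{1+\alpha})$ error from Schur-complement estimates against the coercive complement.

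The core computation is the $O(\eps)$ expansion of $M$. Substituting the local expansion of $\LL_\fil$, integrating out the transverse variable against $\hpsi_{\fil,0}$, and using that $U_{\fil,1}$ contributes through the pointwise term $-W'''(\phi_\fil)\,U_{\fil,1}$ multiplied by $\hpsi_{\fil,0}^2$, the diagonal entries take the form
\begin{equation*}
M_{nn}=(\lambda_{\fil,0}-\eps^2\beta_n)^2+\eps\Bigl[\hlam\,S_\fil+(\eta_1-\eta_2)\bigl(\|\hpsi'_{\fil,0}\|_{L^2(\RR)}^2+\lambda_{\fil,0}\|\hpsi_{\fil,0}\|_{L^2(\RR)}^2\bigr)\Bigr]+o(\eps),
\end{equation*}
where $S_\fil$ is the constant defined in (\ref{e:S-pr}); the gradient term $\|\hpsi'_{\fil,0}\|^2$ in the codimension-two setting (absent from the bilayer condition (\ref{e:pearl-cond-bl})) arises from the extra radial derivative produced when the $\eps\eta_1\Delta$ factor from (\ref{e:LL}) acts on the transverse profile. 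Off-diagonal entries $M_{nm}$ with $n\neq m$ are generated only through the nonconstant curvatures of $\Gamma_\fil$ and pick up an extra smallness factor, which bounds them by $O(\eps^{1+\delta})$; since on $\Sigma_{\fil,0}(r)$ the diagonal spread $(\lambda_{\fil,0}-\eps^2\beta_n)^2$ is at most $\eps^r$, the Gershgorin circle theorem gives that the smallest eigenvalue of $M$ differs from $\min_nM_{nn}$ by a controlled error, and pearling stability is equivalent to the bracketed quantity being negative, i.e. (\ref{e:pearl-cond-pr}). The hardest step is the uniform control of the off-diagonal entries: the pearling modes become exponentially numerous and asymptotically closely spaced as the curvature varies along $\Gamma_\fil$, so the natural expansion of the transverse profile in the nonorthogonal whiskered frame generates curvature-weighted mixing terms between $\Theta_n$ that must be estimated delicately enough to beat the $\eps^r$ gap between adjacent diagonal entries — this is precisely where the optimal choice of $r\in(0,1)$ must be made to close the perturbation argument.
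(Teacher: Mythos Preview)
Your overall architecture matches the paper's: whiskered coordinates, the expansion $\LL_\fil=\cL_\fil^2+\eps\LL_{\fil,1}+O(\eps^2)$, projection onto a tensor-product space, and a Schur-complement localization of the small eigenvalues. The diagonal computation you sketch is essentially Proposition~\ref{p:M0pr} (modulo a sign: the bracketed quantity enters $M_{nn}$ with a \emph{minus} sign, which is why its negativity yields positivity of the smallest diagonal entry).

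There is, however, a genuine gap in how you set up the reduction. You take $X_\fil$ to be spanned by the pearling modes $\hpsi_{\fil,0}\Theta_n$ only and then invoke Assumption~\ref{CS-Assum:coercive} on $X_\fil^\perp$. But that assumption is stated for the complement of the \emph{full} space (\ref{e:Xfil}), which also contains the meander modes built from the translational kernel of $L_{\fil}$. If you omit them, those meander modes sit in your $X_\fil^\perp$ and carry $O(\eps)$ eigenvalues of $\LL_\fil$ (this is exactly (\ref{e:pr-M11-asymp})), so $\LL^\perp$ is \emph{not} coercive at level $\eps^r$ for any $r<1$, and the Schur-complement correction $B(\LL^\perp-\Lambda)^{-1}B^T$ is no longer $o(\eps)$ in the $\Lambda=O(\eps)$ range you must resolve.

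The fix is to include the meander modes in $X_\fil$, as the paper does; but then your Gershgorin argument breaks down. The pearling--meander cross-block $M^{0,1}$ has individual entries of size $O(\eps)$, see (\ref{e:pr-M01}), and there are $N_{\fil,1}\gg1$ of them per row, so the Gershgorin radius swamps the $O(\eps)$ diagonal. The paper replaces Gershgorin by a structural $l^2$-operator-norm bound (Lemma~\ref{l:prM0}): a matrix with entries $A_{jk}=\int_{\Gamma_\fil}f(\vkappa)\Theta_j\Theta_k\,ds$ is nothing but the matrix of the multiplication operator $v\mapsto fv$ in the orthonormal basis $\{\Theta_k\}$, hence $\|A\|_{l^2_*}\leq\|f\|_{L^\infty}$ \emph{independently of the block size}. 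For the cross-block an integration-by-parts against the wavenumber gap between $\Sigma_{\fil,0}$ and $\Sigma_{\fil,1}$ produces an additional factor $\eps^s$. These two observations, combined in Proposition~\ref{PP-TheoCor}, give the $o(\eps)$ control on the off-diagonal part of $M$; this multiplication-operator estimate, rather than a Gershgorin count, is the mechanism that ``beats the $\eps^r$ gap'' you correctly flag at the end.
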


In section \ref{sec:coordinates} we introduce the local codimension one and two coordinate systems, define the admissible interfaces and
hypersurfaces, and construct the corresponding bilayer and filament morphologies via the dressing process.
In sections \ref{Sec:Pearling-Bl} and \ref{Sec:Pearling-Pr} we characterize the pearling spectra of the linearizations about the corresponding bilayer and filament morphologies, respectively.

\section{Local coordinate systems and the dressing process}
\label{sec:coordinates}

The stability and slow evolution of localized solutions within the FCH gradient flow (\ref{e:FCH-eq}) is strongly influenced by the value of the far-field chemical potential, $\mu$. Within
the construction of the bilayer and filament morphologies we parameterize the far-field chemical potential by the
scaled Lagrange-multiplier $\hlam$ introduced in (\ref{e:FCH-static}). Indeed, under the FCH gradient flow (\ref{e:FCH-eq}) the interactions
between spatially localized structures are not dominated by exponentially weak tail-tail interactions, but rather are mediated
through the dynamics of $\hlam$. Since the geometric motion of the interfaces and the evolution of $\hlam$ occurs on the
relatively slow $O(\eps^{-1})$ time scale, and the pearling instabilities are manifest on the quick $O(\eps)$
time-scale, it is self-consistent to view the bilayer and filament morphologies as static on the time-scale of
bifurcation. Indeed we rewrite (\ref{e:FCH-static}) as a vector system in which the $\hlam$ scaling is natural
\beqa
\eps^2 \Delta u -W'(u) &=& \eps v, \label{QE-1}\\
(\eps^2\Delta -W''(u))v & =& \left(-\eps^2\eta_1\Delta u+\eta_2 W'(u)\right)+\hlam.\label{QE-2}
\eeqa
The bilayer and filament morphologies render the residual $\mu(u)-\eps\hlam = O(\eps^2)$ in the $L^2(\Omega)$ norm and are constructed from (\ref{QE-1})-(\ref{QE-2}) as a perturbation of the quasi-equilibria of the associated Cahn-Hilliard free energy, that is solutions of
\beq
\label{e:CH-eq}
  \eps^2\Delta u -W'(u) = O(\eps).
\eeq
The bilayer and filament morphologies, defined more precisely in the sequel, are constant off of the reaches of the respective
underlying interface and hypersurface, and limit to a common far-field value parameterized by $\hlam$. We construct the bilayers and filaments
separately, and piece them together additively so long as the respective reaches are disjoint.

\subsection{Admissible codimension-one interfaces and their dressings}

Given a smooth, closed $d-1$ dimensional manifold $\Gamma_b$ immersed in $\Omega\subset\RR^d$,
we define the local  ``whiskered''  coordinates system in a neighborhood of $\Gamma_b$ via the mapping
\beq
\label{e:codim1-cov}
 x=\rho(s,z):= \zeta_b(s)+\eps \nu(s)z,
\eeq
where $\zeta_b:S\mapsto\RR^d$ is a local parameterization of $\Gamma_b$ and $\nu(s)$ is the outward unit normal to $\Gamma_b.$
The variable $z$ is often called the $\eps$-scaled, signed distance to $\Gamma_b$, while the variables $s=(s_1,\ldots, s_{d-1})$ parameterize
the tangential directions of $\Gamma_b$.
\begin{Def}\label{def:admissible}
For any $K, \ell > 0$ the family, $\cG_{K,\ell}^b$,  of admissible interfaces is comprised of closed (compact and without boundary),
oriented $d-1$ dimensional manifolds $\Gamma_b$  embedded in $\mbbR^d$, which are far from self-intersection and with a smooth second fundamental form.  More precisely,
\vskip 0.01in
\begin{tabular}{lp{5.0in}}
 (i)& The  $W^{4,\infty}(S)$ norm of the 2nd Fundamental form of $\Gamma_b$ and its principal curvatures are bounded by $K$.\\
 (ii)& The whiskers of length $3\ell < 1/K$, in the unscaled distance, defined for each~$s_0\in S$ by, $w_{s_0}:=\{x: s(x)=s_0, |z(x)|<3\ell/\eps\}$,  neither intersect each-other
nor $\partial\Omega$ (except when considering periodic boundary conditions). \\
(iii)& The surface area, $|\Gamma_b|$, of $\Gamma_b$ is bounded by $K$.
\end{tabular}
\end{Def}

For an admissible codimension-one interface $\Gamma_b$ the change of variables $x \rightarrow \rho(s,z)$ given by (\ref{e:codim1-cov}) is a $C^4$ diffeomorphism
on the reach of $\Gamma_b$, defined as the set
\beq
\label{e:whiskNbd}
\Gamma_b^\ell := \left\{ \rho(s,z) \in \mbbR^d \Bigl| s \in S, -\ell/\eps \le z \le \ell/\eps \right\}\subset\Omega.
\eeq
On the reach we may view $x=x(s,z)$ and equivalently $(s,z)=(s(x),z(x))$; within the whiskered coordinate system
the Cartesian Laplacian takes the form
\beq\label{e:LaplacianBl}
 \eps^2\Delta_x = \partial_z^2 + \eps (\partial_zJ)/(\eps J)\partial_z + \eps^2 J^{-1} \sum\limits_{i,j=1}^2 \frac{\partial}{\partial s_i}G^{ij}J\frac{\partial}{\partial s_j}=\partial_z^2+\eps H(s,z)\partial_z +\eps^2 \Delta_G,
\eeq
where $J$ is the Jacobian of the change of variables,
\beq
\label{e:extcurv}
 H:=\partial_zJ/(\eps J)=H_0(s)+\eps z H_1(s) +O(\eps^2),
\eeq
is the extended curvature, given at leading order by the mean curvature $H_0=H_0(s)$,
 and ${\bf G}=G_{ij}$ is the metric tensor whose inverse has components $G^{ij}$. The operator $\Delta_G$ takes the form
\beq\label{e:DeltaG}
\Delta_G = \Delta_s +\eps zD_{s,2},
\eeq
where $\Delta_s$ is the usual Laplace-Beltrami operator on $\Gamma_b$ and~$D_{s,2}$ is a relatively bounded perturbation of~$\Delta_s$ on~$H^2_c(\Gamma_{b}^\ell)$, the
subset of $H^2(\Omega)$ comprised of functions with compact support within~$\Gamma_{b}^\ell$.
In particular we have the expansion
\beq \label{def-D2}
D_{s,2} = \sum_{i,j=1}^d d_{ij}(s,z) \frac{\partial^2}{\partial_{s_i}\partial_{s_j}} + \sum_{j=1}^d d_j(s,z) \frac{\partial}{\partial_{s_j}},
\eeq
where the coefficients satisfy the bounds
\beq \label{e:HP-est}
 \max_{ij} \left( \| \partial_z^m \nabla_s^k d_{ij}\|_{L^\infty(\Gamma^\ell_b)}, \|\partial_z^m \nabla_s^kd_j\|_{L^\infty(\Gamma^\ell_b)} \right) \leq C\eps ^m,
 \eeq
 for $m,~k\in\{0, 1, 2, 3, 4\}$ and $C>0$ depending only upon the choice of $\cG_{K,\ell}^b$, see section 6 of \cite{HAYRAPETYAN2014SPECTRA} for details.

We factor the Jacobian as $J=J_0(s)\tJ(s,z)$ where
\beq\label{def-tJ}
 \tJ(s,z):= \eps \prod\limits_{i=1}^{d-1}(1-\eps z k_i(s)) = \sum_{j=0}^d \eps^{j+1}z^j K_j(s),
\eeq
and $K_j$ is the sum of the $j$'th order monomials in the curvatures.
With this factored form, the Laplace-Beltrami operator is self-adjoint in the $J_0$ weighted integral over $\Gamma_b$, and for any $f,g\in L^2(\Omega)$ with
support inside the reach, $\Gamma^\ell_b$, of $\Gamma_b$ we write
\beq
\left(f,g\right)_{L^2(\Omega)} = \int_{\Gamma_b}\int_{-\ell/\eps}^{\ell/\eps} f(s,z)g(s,z)J(s,z)dz\, ds = \int_{\Gamma_b} (f,g)_\tJ(s) J_0(s) ds,
\eeq
where we have introduced the inner product
\beq
\label{def-tJip}
(f,g)_\tJ(s):= \int_{-\ell/\eps}^{\ell/\eps} f(s,z)g(s,z)\tJ(s,z)\, dz.
\eeq

\begin{definition} Given an admissible codimension-one interface $\Gamma_b\in\cG_{K,\ell}^b$ and $f:\RR\rightarrow\RR$ which tends to constant value~$f_{\infty}$ at an
exponential rate as~$z\rightarrow\pm\infty$,  then we define the $H^2(\Omega)$ function
\beq
\label{def:bl-dress}
    f_{\Gamma_b}(x):=\left\{ \begin{array}{cl} f(z(x))\chi(|z(x)|/\ell)+f_\infty(1-\chi(|z(x)|/\ell)) & x\in\Gamma_b^\ell \\
                                                                                       f_\infty & x\in\Omega\backslash\Gamma_b^\ell
                                                                                       \end{array} \right.,
\eeq
where~$\chi:\RR\rightarrow\RR$ is a fixed, smooth cut-off function which takes values one on~$[0, 1]$ and $0$ on $[2,\infty)$.
We call $ f_{\Gamma_b}$ the {\bf dressing of~$\Gamma_b$ with~$f$},  and by abuse of notation will drop the $\Gamma_b$ subscript
when doing so creates no confusion.
\end{definition}

In the whiskered coordinates, the Cahn-Hilliard Euler-Lagrange relation~(\ref{e:CH-eq}) reduces at leading order to a second-order ODE in $z$,
for the one-dimension profile~$\phi(z)$, given in (\ref{e:BLcp}). Since the double-well $W$ has unequal depth wells $0=W(b_-)>W(b_+)$,
a simple phase-plane analysis shows that this equation supports a unique solution $\phi_b$ which is {\em homoclinic} to $b_-$ as $z\to\pm\infty$. To
each admissible codimension-one interface $\Gamma_b$ we associate the bilayer dressing $U_b(\cdot;\Gamma_b)$ of $\Gamma_b$ with $\phi_b$ as defined
by (\ref{def:bl-dress}).
To form the bilayer morphology we incorporate the $O(\eps)$ corrections
to $u_b$, as in (\ref{e:ub-exp})
where $U_{b,1}$ is chosen to render the chemical potential $\mu(u_b)=O(\eps^2).$
Focusing on the reach, $\Gamma_b^\ell$ of the interface and
inserting the expansions (\ref{e:LaplacianBl}) and (\ref{e:ub-exp}) into (\ref{e:mu}) we find that
$U_{b,1}(x)=\phi_{b,1}(z(x))$ where
\beq
  L_{b,0}^2 \phi_{b,1} = -\eta_d W'(\phi_b) +\hlam = -\eta_d \partial_z^2 \phi_b +\hlam,
  \eeq
and the linearization $L_{b,0}$ of (\ref{e:BLcp}) about $\phi_b$ is given by (\ref{e:L0-def}).
The linearization of (\ref{e:BLcp}) about $u_b$ is denoted
\beq\label{e:L1-def}
L_{b,1} := \partial_z^2 -W''(U_b+\eps U_{b,1}),
\eeq
and the function $\Phi_{b,j}:=L_{b,0}^{-j} 1$ for $j=1, 2$ whose associated $\Gamma_b$-dressing
leads to expression
\beq \label{e:Ub1}
U_{b,1} = \hlam \Phi_{b,2} - \eta_d L_{b,0}^{-1}\left(\frac{z}{2}\partial_z \phi_b\right).
\eeq
Here, and in the sequel, we treat the inversion of the 1D operator $L_{b,0}$ as if it acts on the natural extension of the corresponding function to $L^2(\RR).$
The difference between these inversions and those based upon a finite domain $[-\ell/\eps,\ell/\eps]$ subject to Neumann boundary conditions
is on the order of $O(e^{-\ell\alpha_-/\eps})$ and is immaterial to our analysis, see \cite{chen1994spectrum} for  detailed discussion
in the context of single-layer solutions of the Allen-Cahn equation.
\subsection{Admissible codimension-two hypersurfaces and their dressings}\label{sec:coord-pr}

We construct filament morphologies of the FCH free energy by the dressing of an admissible codimension-two manifold immersed in $\Omega\subset\RR^3$.
The construction is based upon a foliation of a neighborhood of a smooth, closed, non-self intersecting one dimensional manifold $\Gamma_\fil$ immersed in $\Omega$
and parameterized by $s\in S\mapsto\zeta_\fil(s)\in\Omega$. The whiskered
coordinate system takes the form
\beq
  x = \rho_\fil(s,z_1,z_2) = \zeta_\fil(s)+\eps\left(z_1N_1(s)+z_2N_2(s)\right),
\eeq
where $N_1(s)$ and $N_2(s)$ are orthogonal unit vectors which are also orthogonal to the tangent vector $\zeta_\fil^\prime(s)$, defined by
\beq
    \frac{\partial \bN^i}{\partial s} = -\kappa_i\bT,\quad i=1,2,
\eeq
where
\beq\label{CS-eq:VecKappa}
    \vec{\kappa}(s,z) := (\kappa_1,\kappa_2)^T,
\eeq
is the normal curvature vector with respect to~$\{\bN^1,\bN^2\}$ and $z=(z_1,z_2)$. The Jacobian associated to the change of variables takes the form
\beq\label{e:PrJ}
    J_\fil = \eps^2\tilde J_\fil,
\eeq
where~$\tilde J_\fil:=1-\eps z\cdot\vkappa$.

\begin{Def}\label{def:admissible-2}
For any $K, \ell > 0$ the family, $\cG_{K,\ell}^\fil$,  of admissible hypersurfaces is comprised of closed (compact and without boundary),
oriented $1$ dimensional manifolds $\Gamma_\fil$  embedded in $\mbbR^3$, which are far from self-intersection and with a smooth second fundamental form.  More precisely,
\vskip 0.01in
\begin{tabular}{lp{5.0in}}
 (i)& The  $W^{4,\infty}(S)$ norm of the 2nd Fundamental form of $\Gamma_\fil$ and its principal curvatures are bounded by $K$.\\
 (ii)& The whiskers of length $3\ell < 1/K$, in the unscaled distance, defined for each~$s_0\in S$ by, $w_{s_0}:=\{x: s(x)=s_0, |z(x)|<3\ell/\eps\}$,  neither intersect each-other
nor $\partial\Omega$ (except when considering periodic boundary conditions). \\
(iii)& The length, $|\Gamma_\fil|$, of $\Gamma_\fil$ is bounded by $K$.
\end{tabular}
\end{Def}
Defining the reach, $\Gamma_\fil^\ell$, of $\Gamma_\fil$ in a manner analogous to (\ref{e:whiskNbd}), then within the reach
the Laplacian admits the local form
 \beq\label{e:PrClosedLap}
    \eps^2\Delta_x= \Delta_z -\eps D_z+\eps^2\partial_G^2,
\eeq
where we introduce the operators
\begin{align}
  D_z:= & \frac{\vkappa}{\tilde J_\fil}\cdot\nabla_z, \label{e:Dz}\\
  \partial_G^2:= &\frac{1}{\tJ_\fil}\left(\partial_s \left(\frac{1}{\tJ_\fil}\partial_s\right)\right)= \frac{1}{\tilde J_\fil^2}\partial_s^2+\eps\frac{z\cdot\partial_s\vkappa}{\tilde J_\fil^3}\partial_s.\label{e:partialG2}
\end{align}
For any $f,g\in L^2(\Omega)$ with support inside the reach, $\Gamma^\ell_\fil$, of $\Gamma_\fil$ we write
\beq
\left(f,g\right)_{L^2(\Omega)} = \int_{\Gamma_\fil}\int_{0}^{\ell/\eps} f(s,z)g(s,z)\eps^2 \tJ_\fil(s,z)dz\, ds = \eps^2 \int_{\Gamma_\fil} (f,g)_{\tJ_\fil}(s) ds,
\eeq
where we have introduced the filament inner product
\beq
\label{def-Jipfil}
(f,g)_{\tJ_\fil}(s):= \int_{0}^{\ell/\eps} f(s,z)g(s,z)\tJ_\fil(s,z)\, dz.
\eeq

When acting on functions with radial symmetry with respect to the codimension-two filament $\Gamma_f$ it
is convenient to write the Laplacian in the equivalent form
\beq
\eps^2\Delta_x = \Delta_R-\eps D_z+\eps^2\partial_G^2.
\eeq
where $\Delta_R$ is the usual polar Laplacian in $(R,\theta)$ corresponding to the scaled normal distances $z=(z_1,z_2),$
see \cite{DAI2015COMPETITIVE} for further details on these coordinate changes.

\begin{definition} Given an admissible codimension-two filament $\Gamma_\fil\in\cG_{K,\ell}^\fil$ and a smooth function~$f:\RR_+\rightarrow\RR$ which tends to constant
value~$f_\infty$ at an $O(1)$ exponential rate as~$R\rightarrow\infty$, we define the $H^2(\Omega)$ function

\beq
    f_{\Gamma_\fil}(x):=\left\{
               \begin{array}{cl}
                 f(z(x))\chi(|R(x)|/\ell)+f_\infty(1-\chi(|R(x)|/\ell)) & x\in\Gamma_\fil^\ell \\
                f_\infty & x\in \Omega\backslash \Gamma_\fil^\ell.
                 \end{array}\right.,
\eeq
where~$\chi:\RR\rightarrow\RR$ is a fixed, smooth cut-off function taking values one on~$[0, 1]$, and zero on $[2,\infty)$.
We call $f_{\Gamma_\fil}$ the {\bf dressing of $\Gamma_\fil$ with~$f$}, and by abuse of notation we will drop the $\Gamma_\fil$ subscript when doing so creates no confusion.
\end{definition}

In the codimension-two whiskered coordinates, the Cahn-Hilliard Euler Lagrange relation~(\ref{e:CH-eq}) reduces at leading order to a second-order
non-autonomous ODE in $R$, for the one-dimension profile~$\phi_\fil(R)$,
\beq
\label{e:FLcp}
  \partial_R^2 \phi_\fil+\frac{1}{R}\partial_R\phi_\fil - W'(\phi_\fil)=0,
\eeq
to which we impose the boundary conditions $\partial_R \phi_\fil(0)=0$ and $\phi_\fil\to b=b_-+\eps\gamma_1+O(\eps^2)$ as $R\to\infty$.
We denote by $U_\fil$ the dressing of $\Gamma_\fil$ by $\phi_\fil$, and mirroring the steps for the bilayer morphology,
we construct the filament morphology
\beq
\label{e:PR}
u_\fil(x;\Gamma_\fil,\hlam):= U_\fil(x)+\eps U_{\fil,1}(x),
\eeq
where the correction term takes the form
\beq \label{e:Ub1fil}
U_{\fil,1} = \hlam \Phi_{\fil,2} - \eta_d L_{\fil,0}^{-1}\left(\frac{R}{2}\phi_\fil'\right).
\eeq
As in the codimension-one case, we have introduced the operator
\beq \label{e:Lf0-def}
L_{\fil,0} := \partial_R^2 +\frac{1}{R}\partial_R-W''(\phi_\fil),
\eeq
corresponding to the linearization of (\ref{e:FLcp}) about $\phi_{\fil}$ and the functions
$\Phi_{\fil,j}:=L_{\fil,0}^{-j} 1$ for $j=1, 2$ and their $\Gamma_{\fil}$ dressings. Here, and in the sequel, the inversion of $L_{\fil,0}$ is understood
as acting on the natural extension it argument to $L^2(\RR_+).$  We also introduce
\beq
\label{e:Lf0-uf-def}
L_{\fil,0}^{u_\fil}:=  \partial_R^2 +\frac{1}{R}\partial_R-W''(u_\fil),
\eeq
the linearization of (\ref{e:FLcp}) about $u_\fil.$
In particular $u_\fil$ admits the far-field value
\beq \label{e:uf-ff}
u_{\fil}(x) = b_-+\eps \Frac{\hlam}{\alpha_-^2}, \quad {\rm for}\, x\in \Omega\backslash\Gamma_\fil^{3\ell}.
\eeq
Unlike in the codimension-one regime, the radial symmetry of the filament structures does not extend to the
eigenfunctions, which may have a non-trivial $\theta$ dependence. Consequently, we will have need for the full operators
\beq \label{e:Lf-def}
L_{\fil} := \Delta_z-W''(\phi_\fil),
\eeq
and
\beq
\label{e:Lf-uf-def}
L_{\fil}^{u_\fil}:=  \Delta_z-W''(u_\fil),
\eeq
and their restrictions to the certain invariant subspaces.


\section {Pearling eigenvalues of bilayer morphologies}\label{Sec:Pearling-Bl}

 The second variational derivative of $\cF$ at a generic function $u$ was introduced in (\ref{e:LL}). When $u$ is a bilayer morphology $u_b$
associated to an admissible codimension-one interface  $\Gamma_b$, as defined in (\ref{e:ub-exp}), the second variational derivative, denoted $\LL_b$,
takes a simplified form when acting on functions $u\in H^4(\Omega)$ whose support lies within the reach, $\Gamma_b^\ell$, of $\Gamma_b$.
On this subspace the operator admits the exact expression
$$
\bL_{b}= \left(L_{b,1} +\eps H\partial_z +\eps^2 \Delta_G\right)^2 +\eps \left(\eta_1(\partial_z^2+\eps H\partial_z+ \eps^2\Delta_G) -\eta_2 W''(u_b)\right)
- \left(\partial_z^2u_b-W'(u_b)+\eps H\partial_zu_b\right)W'''(u_b),
$$
where $L_{b,1}$ is defined in (\ref{e:L1-def}). The two dominant operators in $\LL_b$ are $\partial_z^2$ and $\eps^2\Delta_s$, with the Laplace Beltrami operator forming the principle part of $\Delta_G$,
see (\ref{e:DeltaG}).
This observation suggests the introduction of
\beq
\label{def:cLb}
\cL_b:=L_{b,0}+ \eps^2\Delta_s,
\eeq
which balances the sum of the mostly negative Sturm-Liouville operator $L_{b,0}$, introduced in (\ref{e:L0-def}), and the non-positive
 Laplace-Beltrami operator associated to $\Gamma_b$. With this notation, we may write $\bL_b$ as the square of
$\cL_b$ plus lower order terms
\beq\label{PB-mbbL}
	\LL_b = \cL_b^2+\eps \LL_1+ O(\eps^2),
\eeq
where the first correction term takes the form
\begin{align}
	\LL_1 &:= \cL_b\circ (H_0\partial_z-W'''(U_b)U_{b,1})+(H_0\partial_z-W'''(U_b)U_{b,1})\circ \cL_b + \eta_1 \partial_z^2 -\eta_2 W''(U_b) \label{eq:L1}\\
	     	  & \quad   -(L_{b,0} U_{b,1}+H_0 U_b')W'''(U_b).\nonumber
\end{align}
The operator $\cL_b$ is self-adjoint in the $J_0$ weighted inner product on the reach of $\Gamma_b$, and the unbounded terms in the operators~$\LL_i$
for $i\geq1$ are relatively compact in $H^1_0(\Gamma_b^\ell)$ with respect to $\cL_b^2$.
In this section we analyze the spectrum of $\LL_b$ on spaces where the positive spectra of $L_{b,0}$ balance the negative spectra of $\eps^2\Delta_s$.
\begin{definition}\label{CS-def:ScaledEigenfunctions}
We define the \textbf{scaled eigenfunctions}~$\spsi_{b,k}:=\chi(z)\tilde J^{-1/2}\hpsi_{b,k}$, where~$\hpsi_{b,k}$ is the~$k^{th}$ eigenfunction of~$L_{b,0}$ and~$\chi(z)$ is a
$\cC^\infty$ cut-off function that takes values $1$ for $|z|<\ell/(2\eps)$, $0$ for $|z|>\ell/\eps$ and is smooth and monotone between.
\end{definition}

In \cite{Doelman2014meander}, the localized spectrum of the linearization about FCH equilibria arising as the dressing of single-curvature interfaces was presented.
We extend these results to include the bilayer morphologies arising as the dressing of admissible codimension one interfaces,
which also incorporate perturbations of the back-ground state, focusing on the pearling eigenvalues that are the dominant modes of
instability. For an admissible codimension-one interface,~$\Gamma_b$, we consider the eigenvalue problem
\beq\label{e:EVP}
    \LL_b\Psi_{b} = \Lambda_{b}\Psi_{b},
\eeq
associated to the second variation, $\LL_b$ of $\cF$ about a bilayer morphology $u_b$.
The spectrum of $\LL_b$ cannot be localized by a regular perturbation expansion since the eigenvalues are asymptotically
close together. A perturbation analysis requires bounds on the spectrum that are uniform in $\eps\ll 1$. To this end we recall the
tensor product formulation of the pearling and meander eigenmodes, (\ref{e:BL-tensor}), and introduce the
$L^2(\Omega)$ orthogonal projection~$\Pi$ onto the space
\beq\label{e:spaceCorrespond2SmallEV}
X_b(\Gamma_b) := {\rm span} \left\{ \spsi_j(z) \Theta_n(s) \bigl | \, j=0, 1, \,\, {\rm and}\,\, n\in \Sigma_{b,j}(r)\,{\rm respectively}\right \},
\eeq
which approximates the eigenspaces of $\LL_b$ corresponding to pearling $(j=0)$ and meander $(j=1)$ eigenmodes.
We denote the dimension of $X_b$ by $N_b=N_{b,0}+N_{b,1},$ see (\ref{e:SigmaBl}), and remark that the basis of $X_b$
is orthonormal in $L^2(\Omega)$, up to exponentially small terms. Since the basis elements are localized on $\Gamma_b$ their inner product can be written as
\beq
\left(\spsi_k\Theta_n, \spsi_j\Theta_m\right)_{L^2(\Omega)}=\int_{\Gamma_b}\int_{-\frac{\ell}{\eps}}^\frac{\ell}{\eps} \spsi_j(z,s)\spsi_k(z,s)\Theta_n(s)\Theta_m(s)J(z,s)dz\,ds,
\eeq
and from Definition\,\ref{CS-def:ScaledEigenfunctions} of the scaled eigenfunctions and the factored form (\ref{def-tJ}) of the Jacobian
we have
\beq
\left(\spsi_k\Theta_n, \spsi_j\Theta_m\right)_{L^2(\Omega)} = \left(\int_{\Gamma_b}\Theta_n(s)\Theta_m(s) J_0(s)\,ds\right)
                                                                                \left(\int_{-\frac{\ell}{\eps}}^\frac{\ell}{\eps}\hpsi_j(z)\hpsi_k(z)\,dz\right) = \delta_{nm}\delta_{jk}
                                                                                +O(e^{-\ell/\eps}).
\eeq

To address the eigenvalue problem (\ref{e:EVP}), we fix an admissible codimension-one interface, $\Gamma_b\in\cG^b_{K,\ell}$ and expand $\LL_b$
as in  (\ref{PB-mbbL}). We localize the pearling and geometric eigenvalues of $\LL_b$  via an analysis of its projection onto $X_b$,
searching for solutions of the eigenvalue problem,~(\ref{e:EVP}), via the decomposition
\beq
\label{e:EVP-decomp}
    \Psi_b = v_b+ \Psi_b^\perp,
\eeq
where
\beq
\label{e:v-alpha}
v_b=\sum_{k\in\Sigma_{b,0}(r)}\alpha_{0,k}\spsi_{b,0}\Theta_k+\sum_{k\in\Sigma_{b,1}(r)}\alpha_{1,k}\spsi_{b,1}\Theta_k \in X_b
\eeq
and  $\Psi_{b}^\perp\in X_b^\bot$.  We use the projections $\Pi$ and $\tilde{\Pi}=I-\Pi$ to decompose the operator~$\LL_b$ into the $2\times2$ block form,
\beq\label{PB-BreakL}
    \begin{bmatrix}
        M & B           \\[0.3em]
        B^T & \LL^\perp
      \end{bmatrix},
\eeq
where
\begin{align}
\label{def-BreakL}
    M:=\Pi\LL_b\Pi,\quad
    B:=\Pi\LL_b\tilde{\Pi},\quad
    \LL^\perp:=\tilde \Pi\LL_b\tilde \Pi.
\end{align}
From \cite{HAYRAPETYAN2014SPECTRA}, the restricted operator $\LL^\perp$ is $L^2(\Omega)$ coercive on $X_b^\bot$.
In section~\ref{PB-sec:BoundM} we analyze the spectrum of $M$, and show that its spectrum coincides to
leading order with the small eigenvalues of $\LL_b$.

\subsection{Eigenvalues of the pearling matrix $M:=\Pi\LL_b\Pi$}\label{PB-sec:BoundM}
Denote~$v\in X_b$ by
\beq
\label{e:Xb-expand}
v(s,z)=\overline{v}_0(s)\spsi_{b,0}+\ov_1(s)\spsi_{b,1},
\eeq
where for $j=0, 1$ we have introduced
\beq
\ov_j:=\sum_{k\in\Sigma_{b,j}(r)}\alpha_{j,k}\Theta_k.
\eeq

Representing the coefficients of $v$ by  $\valpha_j=(\alpha_{j,k})_{k\in\Sigma_{b,j}(r)}$, for $j=0, 1$ and $\valpha=(\valpha_0,\valpha_1)^T$,
 the action of $\LL_b$  on $v$ can be represented by
the matrix $M\in\mbbR^{N_b\times N_b}$  with entries
\beq\label{PB-Def:M}
    M_{jk}:=(\LL_b\spsi_{b,I(j)}\Theta_j,\spsi_{b,I(k)}\Theta_k)_{L^2(\Omega)},
\eeq
where the index function $I(j)$ takes the value $k$ if $j\in\Sigma_{b,k}(r)$ for $k=0, 1$.
From the expansion of~$\LL_b$, (\ref{PB-mbbL}), we may fix $q\in\NN_+$ and
group terms in $M$ into two classes
\beq\label{PB-def:M}
    M= M^0 +\eps^q\tilde M,
\eeq
where
\begin{align}
    \label{PB-M0}M^0_{jk}&=(\cL_b^2\spsi_{b,I(j)}\Theta_j,\spsi_{b,I(k)}\Theta_k)_{L^2(\Omega)}+
     \sum_{i=1}^q\eps^i(\LL_i\spsi_{b,I(j)}\Theta_j,\spsi_{b,I(k)}\Theta_k)_{L^2(\Omega)},\\
    \tilde M_{jk}&=\sum_{i\geq q}\eps^{(i-q)}(\LL_i\spsi_{b,I(j)}\Theta_j,\spsi_{b,I(k)}\Theta_k)_{L^2(\Omega)}.
\end{align}

The following Lemma shows that for space dimension $d\leq 3$, $r>0$, and $q=2$, an $O(1)$ bound on the $l^\infty(N_b\times N_b)$
norm of $\tilde{M}$ implies a $o(\eps)$ bound on the $l^2(N_b)\mapsto l^2(N_b)$ operator norm, denoted $l^2_*$, of $\eps^q\tilde{M}.$

\begin{lemma}\label{l-MCor}
The dimension $N_b$ of $X_b$ scales as  $N_b\sim \eps^{(1-d)(1-\frac{r}{4})}$. Moreover, there exists $C>0,$ independent of $\eps>0$, such that
for any matrix $ A\in \RR^{N_b\times N_b}$ we have the operator norm bound
   \beq       \|A\|_{l^2_*}  \leq C \eps^{-q_*} \|A\|_{l^\infty(\RR^{N_b}\times \RR^{N_b})}, \eeq
where  $q_*(d,r):= \frac{d-1}{2}(1-\frac{r}{4}).$
\end{lemma}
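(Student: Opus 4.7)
The plan is to establish the two assertions separately.

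For the dimension count $N_b = N_{b,0} + N_{b,1}$, I would apply Weyl's asymptotics $\beta_n \sim c_W n^{2/(d-1)}$ to the Laplace--Beltrami operator on the closed $(d-1)$--dimensional manifold $\Gamma_b$, noting that $c_W$ depends only on $|\Gamma_b|$ and is therefore controlled uniformly on $\cG^b_{K,\ell}$ by Definition~\ref{def:admissible}. Since $\lambda_{b,1}=0$, the meander condition $(\eps^2\beta_n)^2\leq\eps^r$ collapses to $\beta_n\leq\eps^{r/2-2}$, and inversion of Weyl yields $N_{b,1}\leq C\eps^{(1-d)(1-r/4)}$. The pearling condition $|\lambda_{b,0}-\eps^2\beta_n|\leq\eps^{r/2}$ pins $\beta_n$ to an $O(\eps^{r/2-2})$ window centered at $\lambda_{b,0}\eps^{-2}$; the mean value theorem applied to the inverse of Weyl's function at this interior point transfers the window into the narrower count $N_{b,0}\leq C\eps^{-(d-1)(1-r)}$ already quoted in~(\ref{Nj0-asymp}). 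Comparing exponents shows the meander count dominates for every $r\in(0,1)$ and $d\geq 2$, so $N_b\sim\eps^{(1-d)(1-r/4)}=\eps^{-2q_*}$.

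For the matrix--norm bound I would apply Schur's interpolation
\beq
\|A\|_{\ell^2_*} \leq \sqrt{\|A\|_{\ell^1\to\ell^1}\,\|A\|_{\ell^\infty\to\ell^\infty}},
\eeq
together with the elementary observation that every entry $|A_{ij}|$ is bounded by the induced norm $\|A\|_{\ell^\infty(\RR^{N_b}\times\RR^{N_b})}$; summing along a column gives $\|A\|_{\ell^1\to\ell^1}\leq N_b\,\|A\|_{\ell^\infty(\RR^{N_b}\times\RR^{N_b})}$, and the geometric mean then produces $\|A\|_{\ell^2_*}\leq\sqrt{N_b}\,\|A\|_{\ell^\infty(\RR^{N_b}\times\RR^{N_b})}$. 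Combining with the dimension estimate $N_b\sim\eps^{-2q_*}$ delivers the claimed $\eps^{-q_*}$ scaling, with the constant $C$ inheriting its uniformity over $\cG^b_{K,\ell}$ from the Weyl constant.

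The main obstacle is the pearling count, because the relevant $\beta_n$ lie deep in the Laplace--Beltrami spectrum where the Weyl remainder term is not automatically small relative to the narrow window $O(\eps^{r/2-2})$. I would sidestep this by appealing to the eigenvalue density--the derivative of Weyl's counting function--rather than to the counting function itself; the density is uniformly controlled on $\cG^b_{K,\ell}$ through the bounds on the second fundamental form, the surface area, and the injectivity radius supplied by admissibility. Once this uniform local density is secured, the remaining matrix--norm estimate reduces to the short Schur--test computation above.
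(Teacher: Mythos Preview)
Your Weyl-asymptotic treatment of the dimension count is sound and matches the paper's intent; the paper's one-line proof simply cites ``the usual $l^\infty(N_b\times N_b)$ to $l^2_*$ estimate and the Weyl asymptotics,'' and your identification of the meander count $N_{b,1}$ as the dominant contribution, together with your caveat about the Weyl remainder on the pearling window, is the right level of care.

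The Schur-test step, however, contains a genuine gap. You correctly bound the column-sum norm $\|A\|_{\ell^1\to\ell^1}\leq N_b\,\|A\|_{\ell^\infty(\RR^{N_b}\times\RR^{N_b})}$, but the second factor in the interpolation, the row-sum norm $\|A\|_{\ell^\infty\to\ell^\infty}$, obeys only the \emph{same} bound $\leq N_b\,\|A\|_{\ell^\infty(\RR^{N_b}\times\RR^{N_b})}$, not $\leq\|A\|_{\ell^\infty(\RR^{N_b}\times\RR^{N_b})}$. The geometric mean therefore yields $N_b\,\|A\|_{\ell^\infty}$, i.e.\ $\eps^{-2q_*}$, rather than the claimed $\sqrt{N_b}\,\|A\|_{\ell^\infty}=\eps^{-q_*}$. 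The all-ones matrix $A_{ij}\equiv 1$ shows that $N_b$ is sharp: its maximum entry is $1$ while its $\ell^2$ operator norm equals $N_b$, so no inequality of the form $\|A\|_{\ell^2_*}\leq C\sqrt{N_b}\,\|A\|_{\ell^\infty}$ can hold for \emph{arbitrary} $A\in\RR^{N_b\times N_b}$. Thus your argument as written does not deliver the stated exponent $q_*$; either the lemma's exponent is meant to be $2q_*$ (which is what the crude entrywise estimate genuinely gives), or additional structure on $A$---of the $\Theta_j\Theta_k$-factored type treated separately in Lemma~\ref{l:bdM0}---is being invoked implicitly, and your Schur argument does not exploit it.
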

\begin{proof}
    This is the concatenation of the usual  $l^\infty(N_b\times N_b)$ to  $l^2_*$ estimate and the Weyl asymptotics which control the size of $N_b.$
  \end{proof}

We further divide $M^0$ into sub-blocks
\beq\label{PB-M0-sub}
M^0 =    \begin{bmatrix}
        M^{0,0} & M^{0,1}           \\[0.3em]
        M^{1,0} & M^{1,1}
      \end{bmatrix},
\eeq
with the $M^{j,k}$ sub-block corresponding the inner products with entries from $\Sigma_{b,j}(r)$ and $\Sigma_{b,k}(r)$ for $j,k=0,1.$
The $M^{0,0}$ and $M^{1,1}$ sub-blocks are called the pearling and geometric sub-blocks respectively.

The following proposition characterizes the leading order entries of $M^0$ in terms of system dependent parameters
\begin{prop}
\label{p:M0}
For an admissible bilayer morphology $u_b(\cdot; \Gamma_b, \hlam)$ with far-field parameter $\hlam$,  the entries of the pearling, geometric,
and cross-term sub-blocks of $M^0$ defined in (\ref{PB-M0-sub}) take the form
\beq \label{e:M00-asymp}
    M^{0,0}_{jk}=
    \begin{cases}
        \eps\left(P_{k,0}^2-\hlam S_b-\eta_d\lambda_{b,0}\|\hpsi_{b,0}\|_2^2\right) + O(\eps\sqrt{\eps}) &~\text{ if }j=k,\\
      - \eps^2 \int_{\Gamma_b} (\|\partial_z\hpsi_{b,0}\|_{L^2}^2 K_1^2 +S_{1,0} \Hbl_1)\Theta_k\Theta_jJ_0\,ds
        +O(\eps^2\sqrt{\eps})&~\text{ if }j\neq k,
    \end{cases}
\eeq
for $j,k\in\Sigma_{b,0}(r),$ where $S_b$ and $S_{1,0}$ are given in (\ref{e:S-bl}) and (\ref{e:S1j-def}) respectively,
\beq \label{e:M11-asymp}
    M^{1,1}_{jk} =
    \begin{cases}
        \eps P_{k,1}^2 + O(\eps^2) &~\text{ if }j=k,\\
       - \eps^2 \int_{\Gamma_b} (\|\partial_z\hpsi_{b,1}\|_{L^2}^2 K_1^2 +S_{1,1} \Hbl_1)\Theta_k\Theta_jJ_0\,ds
         +O(\eps^2\sqrt{\eps})&~\text{ if }j\neq k,
    \end{cases}
\eeq
for $j,k\in\Sigma_{b,1}(r),$ where $S_{1,1}$ is given in (\ref{e:S1j-def}), and
\beq
    M^{0,1}_{jk} =    \eps S_2 \int_{\Gamma_b} H_0 \Theta_j\Theta_k J_0\, ds + \cL_{jk}+
    O(\eps^2),
\eeq
for $j\in\Sigma_{b,0}(r)$, $k\in\Sigma_{b,1}(r),$ where $S_2$ is given in (\ref{e:S2-def}) and $\cL_{jk}=O(\eps^{\frac32})$ is given in (\ref{e:cLb-asymp}).
 The mean and quadratic curvatures $H_0$ and $H_1$ of $\Gamma_b$ are defined in (\ref{e:extcurv}), $K_1$ is defined in (\ref{def-tJ}),
while the detuning constants,
\beq\label{PB-Pk}
    P_{k,I(k)} := \eps^{-1/2}(\lambda_{b,I(k)}-\eps^2\beta_k),
\eeq
arise in the definition of $\Sigma_{b,I(k)}(r)$ in (\ref{e:SigmaBl}).
\end{prop}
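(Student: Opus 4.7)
The plan is to compute each entry of $M^0$ by direct expansion in $\eps$, exploiting both the tensor-product structure of the basis $\spsi_{b,I(j)}(z)\Theta_j(s)$ and the factorization $J=J_0(s)\tJ(s,z)$ of the Jacobian. First I would rewrite the inner products in whiskered coordinates: because $\spsi_{b,k}=\chi(z)\tJ^{-1/2}\hpsi_{b,k}$, the $\tJ^{-1/2}$ cancels against $\tJ$ in the volume form, so up to $O(e^{-\ell/\eps})$ cutoff errors every entry reduces to an iterated integral whose inner part is a $z$-integral of $\hpsi_{b,I(j)}$, $\hpsi_{b,I(k)}$ against operator-specific kernels, and whose outer part is an $s$-integral against $\Theta_j\Theta_k J_0$. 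This separation is what ultimately yields the quoted formulas, with the distinguished $z$-constants $S_b, S_{1,0}, S_{1,1}, S_2$ multiplying purely geometric $s$-integrals over $\Gamma_b$.

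The $\cL_b^2$ contribution is handled via $\cL_b=L_{b,0}+\eps^2\Delta_s$ together with $L_{b,0}\hpsi_{b,I(k)}=\lambda_{b,I(k)}\hpsi_{b,I(k)}$ and $\Delta_s\Theta_k=-\beta_k\Theta_k$; modulo its commutator with $\tJ^{-1/2}$, the operator $\cL_b$ acts on $\spsi_{b,I(k)}\Theta_k$ as multiplication by $\sqrt\eps P_{k,I(k)}$, which gives the diagonal piece $\eps P_{k,I(k)}^2\delta_{jk}$. The commutator is $\eps$ times curvature-weighted first-order operators in $z$, and its leading off-diagonal contribution (after integration in $z$) picks up the $K_1$ factor in (\ref{def-tJ}). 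For the $\eps\LL_1$ contribution I would substitute the explicit $\LL_1$ from (\ref{eq:L1}) and $U_{b,1}$ from (\ref{e:Ub1}). On the diagonal the $s$-integrals reduce to $\int\Theta_k^2 J_0\,ds=1$, and the surviving $z$-integrals---those involving $W'''(\phi_b)\hpsi_{b,0}^2\Phi_{b,2}$, $\eta_1(\partial_z\hpsi_{b,0})^2$, $\eta_2 W''(\phi_b)\hpsi_{b,0}^2$, and the identity $L_{b,0}\hpsi_{b,0}=\lambda_{b,0}\hpsi_{b,0}$---collapse into $-\hlam S_b-\eta_d\lambda_{b,0}\|\hpsi_{b,0}\|_{L^2}^2$, which defines $S_b$ (and analogously $S_{1,0},S_{1,1},S_2$ from the off-diagonal and cross-block computations). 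Off-diagonal, the leading $s$-integrals vanish by orthonormality of $\{\Theta_k\}$, so the first nonzero contribution arises at $O(\eps^2)$ from the $s$-dependent coefficients $H_1$ and $K_1$ in (\ref{e:extcurv}) and (\ref{def-tJ}), producing the displayed $\eps^2\int_{\Gamma_b}(\,\cdot\,)\Theta_j\Theta_k J_0\,ds$ terms.

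The principal obstacle will be uniformity. Because $\Gamma_b$ is a generic admissible hypersurface, the $\Theta_k$ are only implicit Laplace-Beltrami eigenfunctions and the bookkeeping must be curvature-indexed rather than explicit; moreover, the remainders $O(\eps\sqrt\eps)$ on the diagonal and $O(\eps^2\sqrt\eps)$ off-diagonal must be controlled uniformly across $j,k\in\Sigma_{b,0}(r)\cup\Sigma_{b,1}(r)$, whose cardinalities grow like negative powers of $\eps$ by (\ref{Nj0-asymp}). To render the remainders harmless once they are passed through Lemma \ref{l-MCor} with $q=2$, I would expand $J,\tJ,H,\Delta_G$ consistently to two orders beyond the leading contribution and use the $W^{4,\infty}(S)$ control on the second fundamental form from Definition \ref{def:admissible}, together with elliptic regularity for $\Theta_k$ in the $J_0$ metric, to bound each neglected term uniformly in the eigenmode indices.
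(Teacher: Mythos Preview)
Your overall strategy matches the paper's: expand in whiskered coordinates, factor $J=J_0\tJ$, separate the $z$- and $s$-integrals, and identify the resulting constants. Two points deserve attention.

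First, a technical refinement. For the $\cL_b^2$ block the paper uses a slightly cleaner device than your commutator expansion: since $\cL_b$ is self-adjoint in the $J_0$-weighted, flat-in-$z$ inner product, one writes
\[
(\cL_b^2\spsi_{I(j)}\Theta_j,\spsi_{I(k)}\Theta_k)_{L^2(\Omega)}
=\int_{\Gamma_b}\!\int \cL_b\!\Bigl(\tfrac{\hpsi_{I(j)}\Theta_j}{\sqrt{\tJ}}\Bigr)\,\cL_b\!\bigl(\hpsi_{I(k)}\Theta_k\sqrt{\tJ}\,\bigr)\,dz\,J_0\,ds,
\]
which makes the $K_1$ correction from $\partial_z\tJ^{\pm1/2}$ immediate. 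Your commutator formulation is equivalent but less transparent.

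Second, a genuine gap. Your justification for the same-block off-diagonal entries being $O(\eps^2)$ is incomplete. You write that ``the leading $s$-integrals vanish by orthonormality of $\{\Theta_k\}$,'' but $\LL_1$ already contains $s$-dependent pieces at leading order, notably $-H_0(s)\phi_b'(z)W'''(\phi_b)$ from the last line of (\ref{eq:L1}); the resulting $s$-integral $\int_{\Gamma_b}H_0\Theta_j\Theta_kJ_0\,ds$ does \emph{not} vanish for $j\neq k$. What actually kills this term when $I(j)=I(k)$ is $z$-parity: $\phi_b'$ is odd, $\hpsi_{b,0}^2$ is even, so the $z$-integral is zero. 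Conversely, when $I(j)\neq I(k)$ the opposite parities of $\hpsi_{b,0}$ and $\hpsi_{b,1}$ let this term survive, producing the leading $\eps S_2\int_{\Gamma_b}H_0\Theta_j\Theta_kJ_0\,ds$ in $M^{0,1}$. The paper organizes this by decomposing $\LL_1$ into its $z$-even and $z$-odd parts; you need the same argument.

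A minor correction: $S_b$ arises from the $-(L_{b,0}U_{b,1})W'''(U_b)$ piece of $\LL_1$, and $L_{b,0}(\hlam\Phi_{b,2})=\hlam\Phi_{b,1}$, so the relevant integral is $\int_\RR\Phi_{b,1}W'''(\phi_b)\hpsi_{b,0}^2\,dz$, not one with $\Phi_{b,2}$.
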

\begin{proof}
The scaled eigenfunctions have support within $\Gamma_b^\ell$ and hence we may change to the whiskered coordinates in the first term of (\ref{PB-M0}). Using the factored form of the Jacobian, and the self adjointedness of $\cL_b$  in the $J_0$ weighted inner product we have
\beq
 \cL_{jk}:=(\cL_b^2\spsi_{b,I(j)}\Theta_j,\spsi_{b,I(k)}\Theta_k)_{L^2(\Omega)} =
 \int_{\Gamma_b}\int_{-\ell/\eps}^{\ell/\eps} \cL_b\left( \Frac{\hpsi_{b,I(j)}\Theta_j}{\sqrt{\tJ}}\right) \cL_b\left( \hpsi_{b,I(k)} \Theta_k \sqrt{\tJ}\right)\, dz\,J_0(s) ds.
 \eeq
 To expand this we deduce from (\ref{def-tJ}) that $\tJ^p$ is $O(\eps^p)$ for any $p\in\RR$, while
\begin{align}
 \partial_z \tJ & =\eps^2 K_1+O(\eps^3), \\
 \nabla_s \tJ& =\eps^2z \nabla_s K_1+O(\eps^3),\\
\Delta_s \tJ & =\eps^2 z\Delta_s K_1+O(\eps^3),\\
\partial_z^2\tJ & = 2\eps^3K_2+O(\eps^4).
\end{align}
Moreover
 \beq
 \cL_b \hpsi_{b,I(j)}\Theta_j = (\lambda_{b,I(j)}-\eps^2 \beta_j)\hpsi_{b,I(j)}\Theta_j = \eps^{\frac12} P_{j,I(j)} \hpsi_{b,I(j)}\Theta_j,
\eeq
so we identify the leading order terms
\begin{align}
 \cL_b\left( \Frac{\hpsi_{b,I(j)}\Theta_j}{\sqrt{\tJ}}\right) &= \eps^{\frac12}P_{j,I(j)}  \Frac{\hpsi_{b,I(j)}\Theta_j}{\sqrt{\tJ}}-\eps^\frac{1}{2}\partial_z\hpsi_{b,I(j)}K_1\Theta_j
  +O(\eps^\frac{3}{2}),\\
 \cL_b\left( \hpsi_{b,I(k)}\Theta_k\sqrt{\tJ}\right) &= \eps^{\frac12}P_{k,I(k)}  \hpsi_{b,I(k)}\Theta_k \sqrt{\tJ} + \eps^\frac{3}{2} \partial_z\hpsi_{b,I(k)}K_1\Theta_k +
 O(\eps^{\frac{5}{2}}),
 \end{align}
where the error is in $L^2(\Omega)$ and we used the estimate $\|\nabla_s \Theta_j\|_{L^2(\Gamma_b)} =O(\eps^{-1})$ to bound the $\nabla_s \Theta_j\cdot \nabla_s\tJ$ terms. Combining these,
and using the parity considerations to eliminate the $z$ integrals of $\hpsi \partial_z \hpsi$ we find
\beq
\label{e:cLb-asymp}
\cL_{jk}=
 \begin{cases}
  \eps P_{j,I}P_{k,I}\delta_{jk}  -\eps^2 \|\partial_z\hpsi_{b,I}\|_{L^2}^2 \int_{\Gamma_b} K_1^2 \Theta_j\Theta_kJ_0\,ds + O(\eps^{\frac{7}{2}}) & \text{if}~I(j)=I(k)=I,\\
  \eps^{\frac{3}{2}}(P_{j,I(j)}+P_{k,I(k)})\int_\RR \hpsi_{b,I(j)}\partial_z \hpsi_{b,I(k)}\,dz \int_{\Gamma_b}K_1\Theta_j\Theta_kJ_0\,ds +O(\eps^\frac52)& \text{if}~ I(j)\neq I(k).
  \end{cases}
 \eeq

 The reduction of the $\LL_1$ inner products are considered case by case. For $j=k\in\Sigma_{b,0}(r)$ we only require leading-order terms.
 Since the leading order term in the Jacobian, $\tJ$, is constant, the calculation reduces to the constant curvature case addressed in
 Lemma 4.3 and Corollary 4.9 of  \cite{Doelman2014meander},
 \beq
  \left( \LL_1 \spsi_{b,I(k)}\Theta_k, \spsi_{b,I(k)}\Theta_k\right)_{L^2(\Omega)} = -(\hlam S_b+\eta_d\lambda_{b,0}\|\hpsi_{b,0}\|_2^2) + O(\sqrt{\eps}),
 \eeq
where we have introduced the 'background factor'
 \beq\label{e:S-bl}
    S_b:=\int_\RR\Phi_{b,1}W'''(\phi_b)\hpsi_{b,0}^2\,dz,
\eeq
which characterizes the impact of the far-field value of $u_b$ on the pearling eigenvalues.
This expansion, in conjunction with (\ref{e:cLb-asymp}) yields the $j=k$ case of (\ref{e:M00-asymp}).
 For $j\neq k$, the $\eps^2\Delta_s$ term in $\LL_b$ induce lower order contributions to the inner product.
This is clear, unless the term falls entirely upon $\Theta_i$ or upon $\Theta_j$,
in which case it becomes $\eps^2\beta_i$ or $\eps^2\beta_j$ (which are $O(1)$) but the integral is lower
order because of orthonormality. The term $\eps^2 \nabla_s\Theta_i\cdot \nabla_s \Theta_j$ might formally appear to be leading order, yet an integration by parts returns us to the prior case.
Therefore, the inner product takes the form
 \beq
  \left( \LL_1 \spsi_{b,I(j)}\Theta_j, \spsi_{b,I(k)}\Theta_k\right)_{L^2(\Omega)} = \int_{\Gamma_b} \left( \LL_1 \frac{\hpsi_{b,I(j)}}{\sqrt{\tJ}},  \frac{\hpsi_{b,I(k)}}{\sqrt{\tJ}}\right)_\tJ \Theta_j\Theta_k J_0(s)\, ds,
 \eeq
which is zero unless the $\tJ$ inner product has non-trivial $s$ dependence. The only leading order term in $\LL_1$ with non-trivial
$s$ dependence is $-H\phi_b^\prime=-H_0(s)\phi_b^\prime(z) + O(\eps)$, however $\phi_b^\prime$ is odd in $z$ so parity issues yield
a non-zero $z$-integral only if $I(j)\neq I(k)$, for which we find
\beq \label{e:M01-est}
 \left( \LL_1 \spsi_{b,I(j)},  \spsi_{b,I(k)}\right)_\tJ =  S_2 H_0(s)  
 + O(\eps), \quad I(j)\neq I(k),
\eeq
where we have introduced the constant
\beq
\label{e:S2-def}
S_2= \int_\RR \hpsi_{b,0}\hpsi_{b,1}\phi_b^\prime W'''(\phi_b)\,dz>0.
\eeq

For $I(j)=I(k)$ the leading order terms occur at next order, and we seek terms which introduce $s$ dependence whose combination
preserves even parity in $z$.  The operator $\LL_1$ can be decomposed into ``even'' operators which preserve $z$ parity and ``odd'' operators
which map odd parity functions to even ones, and conversely. The odd component is the single term
$$[\LL_1]_{\rm odd}=H_0U_b^\prime W'''(\phi_b)+O(\eps^2),$$
while its even component can be further separated into terms, $\LL_1^0$ with no $s$ dependence and a single term
$$[\LL_1]_{\rm even}=[\LL_1^0]_{\rm even}+ [H\phi_b^\prime W'''(\phi_b)]_{\rm even}= [\LL_1^0]_{\rm even}+\eps H_1z\phi_b^\prime W'''(\phi_b) +O(\eps^3).$$
Viewing the integrand in $ \left( \LL_1 \spsi_{b,I(j)},  \spsi_{b,I(k)}\right)_\tJ $ as the action of $\LL_1$ on $\spsi_{b,I(j)}$ subsequently multiplied by $\spsi_{b,I(k)}$ and $\tJ$,  we recall (\ref{def-tJ}) and expand
\beq \label{e:psibJ-exp}
\spsi_{b,I(j)}=\hpsi_{b,I(j)}\tJ^{-\frac12} = \hpsi_{b,I(j)}\left(\eps^{-\frac12} -\frac12 \eps^{\frac12} zK_1(s) +\eps^{\frac32}z^2\left(\frac38 K_1^2(s)-\frac12 K_2(s)\right) + O(\eps^{\frac52})\right).
\eeq
If we use the even part of $\LL_1$ then non-trivial $s$ dependence and overall even parity in $z$ requires either going to third order in one of the three functions, going to second order (odd) in two of the functions, or using the second-order even part of $\LL_1$ on leading order even parts of the three
functions. The first two options introduce two factors of $\eps$, while the third option only introduces one, yielding the leading order term
\beq\label{e:LL1-even}
  \left([\LL_1]_{\rm even} \spsi_{b,I(j)},  \spsi_{b,I(k)}\right)_\tJ = \eps S_{1,I(j)} H_1 + O(\eps^2),
\eeq
where we have introduced the system dependent quantity
\beq
\label{e:S1j-def}
    S_{1,j}:=\int_\RR W'''(\phi_b)\phi_b'\hpsi_{b,I(j)}^2z\,dz,
 \eeq
 for $j=0,1.$  For the odd part of $\LL_1$, overall even $z$ parity requires one odd term from $\spsi_{b,I(j)}$, $\spsi_{b,I(k)}$ or $\tJ$; these
 contribute two copies of $-\frac 12 z K_1$ and one copy of $z K_1$ whose sum cancels, consequently
\beq \label{e:LL1-odd}
  \left([\LL_1]_{\rm odd} \spsi_{b,I(j)},  \spsi_{b,I(k)}\right)_\tJ =O(\eps^2),
\eeq
and combining (\ref{e:LL1-even}) and (\ref{e:LL1-odd}) yields the $j\neq k$ cases of both (\ref{e:M00-asymp}) and (\ref{e:M11-asymp}).
\end{proof}

The diagonal entries, $M^0_{\text{diag}}$, of $M^0$ are $O(\eps)$, and we wish to show that the matrix of off-diagonal entries,
$M^0_{\text{off-diag}}:=M^0-M^0_{\rm diag}$ has an $o(\eps)$ $l^2_*$ norm, so that the eigenvalues of
$M^0$ correspond with its diagonal entries to leading order. From Lemma \ref{l-MCor} we have appropriate operator norm bounds for all
off-diagonal entries of $O(\eps^2\sqrt{\eps})$. The remainder of the off-diagonal entries  can be combined into matrices with $\Theta_j\Theta_k$
factored entries that enjoy bounds independent of the size of the space $X_b.$

\begin{lemma}\label{l:bdM0}
    Let~$\Gamma_b\in\cG^b_{K,\ell}$ be an admissible interface, then its curvatures $\vec k_b\in W^{2,\infty}(S)$. Let~$f:\RR^{d-1}\rightarrow\RR$ be a smooth function, and then there exists $C>0$ such that for any pair of index sets $\Sigma_j\subset\NN_+$ with $|\Sigma_j|=N_j$ for $j=1,2$,
  the matrix~$A\in\RR^{N_1\times N_2}$, defined by its entries
    \beq\label{PB-eq:TheoM}
        A_{ij} = \int_{\Gamma_b} f(\vec k_b)\Theta_i\Theta_j\,J_0\,ds,
    \eeq
    where $\{\Theta_k\}_{k=0}^\infty$ are the eigenfunctions of Laplace-Beltrami operator associated to $\Gamma_b$; satisfies
    \beq\label{l:A-normbdd}
        \norm{A}_{\RR^{N_2}\mapsto \RR^{N_1}}\leq C.
    \eeq
 If moreover $\Sigma_1=\Sigma_{b,0}(r)$ and $\Sigma_2=\Sigma_{b,1}(r)$, 
 then we have the estimate
 \beq
 \label{l:A-normbdd01}
  \norm{A}_{\RR^{N_2} \mapsto \RR^{N_1}}\leq C\eps^s,
 \eeq
 where $s:=\frac{3-d}{2}+r\frac{d}{8}>0,$ for $d\leq 3$ and $r>0$ as in (\ref{e:SigmaBl}).
\end{lemma}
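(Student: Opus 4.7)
The plan is to split the argument by the two claims: the uniform bound (\ref{l:A-normbdd}) follows from $L^2$-boundedness of a multiplication operator, whereas the refined bound (\ref{l:A-normbdd01}) exploits the spectral gap between the pearling and meander index sets $\Sigma_{b,0}(r)$ and $\Sigma_{b,1}(r)$.

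For part (i), I would recognize the matrix $A$ as the representation of the multiplication operator $M_f: u \mapsto f(\vec k_b(s))u$ acting on $L^2(\Gamma_b, J_0\,ds)$ in the basis $\{\Theta_k\}$, restricted to the finite-dimensional subspaces indexed by $\Sigma_1$ and $\Sigma_2$. Since the $\Theta_k$ are orthonormal in the $J_0$-weighted inner product, $A = P_{\Sigma_1}M_f P_{\Sigma_2}$ as matrices, and because orthogonal projections are contractions, $\|A\|_{\ell^2\to\ell^2} \leq \|M_f\|_{L^2\to L^2} = \|f(\vec k_b)\|_{L^\infty(\Gamma_b)}$. Admissibility bounds $\vec k_b$ in $W^{2,\infty}(S)\hookrightarrow L^\infty(S)$ and $f$ is smooth, so the right-hand side is bounded uniformly in $\Gamma_b\in\cG^b_{K,\ell}$.

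For part (ii), the key observation is the spectral gap: for $i\in\Sigma_{b,0}(r)$ one has $\beta_i=\eps^{-2}\lambda_{b,0}(1+O(\eps^{r/2}))$, while for $j\in\Sigma_{b,1}(r)$ one has $\beta_j=O(\eps^{r/2-2})$, so $\beta_j/\beta_i\leq C\eps^{r/2}$. Exploiting self-adjointness of $-\Delta_s$ in the $J_0$-weighted inner product, iterated integration by parts shifts powers of $\beta_i$ onto the product $f(\vec k_b)\Theta_j$:
\beq
\beta_i^k A_{ij} = \int_{\Gamma_b}\Theta_i\,(-\Delta_s)^k\bigl(f(\vec k_b)\Theta_j\bigr)\,J_0\,ds.
\eeq
Testing against $\alpha\in\ell^2(\Sigma_2)$ via $u=\sum_{j\in\Sigma_2}\alpha_j\Theta_j$, applying Parseval on the $\{\Theta_i\}$ side and the Leibniz estimate $\|(-\Delta_s)^k(fu)\|_{L^2(J_0)} \leq C\|f\|_{W^{2k,\infty}}\|u\|_{H^{2k}} \leq C\max_{j\in\Sigma_{b,1}(r)}\beta_j^k\,\|u\|_{L^2}$, I obtain $\|A\|_{\ell^2\to\ell^2}\leq C(\max_j\beta_j/\min_i\beta_i)^{k}\leq C\eps^{rk/2}$.

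Taking $k=1$, which requires only the $W^{2,\infty}$ regularity of $\vec k_b$ granted by admissibility, already yields $\|A\|\leq C\eps^{r/2}$; this dominates the target exponent $s=(3-d)/2+rd/8$ in dimension $d=3$. For $d=2$ the operator-norm estimate alone is not sharp enough uniformly in $r\in(0,1)$, and I would supplement it with a Frobenius-type estimate that incorporates the Weyl counts $|\Sigma_{b,0}(r)|\sim\eps^{1-d+r(d-1)}$ and $|\Sigma_{b,1}(r)|\sim\eps^{(1-d)(1-r/4)}$ recalled in the introduction, trading the factor $\sqrt{|\Sigma_1||\Sigma_2|}$ against the per-entry decay produced by the commutator identity. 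The main obstacle I anticipate is aligning these powers of $\eps$ uniformly in $d\in\{2,3\}$ to reach exactly the exponent $s$ without exceeding the $W^{2,\infty}$ regularity ensured by admissibility; a secondary technical check is that $\{\Theta_k\}$ is orthonormal with respect to $J_0\,ds$ and that $-\Delta_s$ is self-adjoint in this weighted inner product, so the iterated integration by parts produces no spurious lower-order boundary or weight terms.
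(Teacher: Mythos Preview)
Your argument for part~(i) is correct and coincides with the paper's: both identify $A$ as the compression of the multiplication operator $u\mapsto f(\vec k_b)u$ between finite-dimensional subspaces of $L^2(\Gamma_b,J_0\,ds)$, bound its norm by $\|f(\vec k_b)\|_{L^\infty}$, and invoke admissibility to control the latter uniformly.

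For part~(ii) your route diverges from the paper's and is only completed for $d=3$. The paper does not use an operator-level Parseval estimate; it works entrywise throughout. From the single commutator identity
\[
(\beta_i-\beta_j)A_{ij}=\int_{\Gamma_b}\Theta_i\bigl(\Delta_s f(\vec k_b)\,\Theta_j+2\nabla_s f(\vec k_b)\cdot\nabla_s\Theta_j\bigr)J_0\,ds,
\]
it extracts the per-entry bound $|A_{ij}|\le C\sqrt{\beta_j}/(\beta_i-\beta_j)$, and then invokes Lemma~\ref{l-MCor} (the $l^\infty\to l^2_*$ estimate that packages the Weyl counts) to obtain the operator-norm bound $\eps^s$ uniformly in $d\le3$. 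Your Parseval argument yields the cleaner bound $\|A\|\le C\eps^{r/2}$, which for $d=3$ is in fact sharper than the stated $\eps^{3r/8}$ and so suffices; but for $d=2$ it is strictly weaker than $\eps^{1/2+r/4}$ for every $r\in(0,1)$, and increasing $k$ cannot repair this since $\eps^{rk/2}\to 1$ as $r\to0$ while $s\to\tfrac12$. Your fallback---per-entry decay from the commutator plus a Frobenius/dimensional count---is exactly the paper's method, so the only genuine gap in your proposal is that the $d=2$ case is left as an outline rather than carried through; doing so (with Lemma~\ref{l-MCor} in place of a raw Frobenius bound) recovers the stated exponent.
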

\begin{proof}
The $\RR^{N_2}\mapsto \RR^{N_1}$ norm of $A$ is defined by
\beq\label{PB-eq:OperNormDef}
    \norm{A}_{\RR^{N_2}\mapsto \RR^{N_1}}:=\inf \{c>0~\Big|~ |(A\vv,\vw)|\leq c\norm{\vv}_{l^2}\norm{\vw}_{l^2}, \text{ for all }\vv\in\RR^{N_2},
      \vw\in\RR^{N_1}\}.
\eeq
Let $\vv\in\RR^{N_1}$ and $\vw\in\RR^{N_2}$, using the definition of~$A$,~(\ref{PB-eq:TheoM}), we can write
\beq
\label{PB-eq:Theo1b}
    \left|(A\vv,\vw)\right| = \left|\sum_{i,j}\int_{\Gamma_b} f(\vec{k}(s))\Theta_iv_i\Theta_jw_j\,J_0ds\right| =\left|\int_{\Gamma_b} f(\vec{k}(s))v(s) w(s)\,J_0ds\right|,
\eeq
where $v:=\sum\Theta_iv_i$ and $w:=\sum\Theta_iw_i$  reside in $L^2(\Gamma_b)$. Applying H$\ddot{\text{o}}$lder's inequality to the last integral yields
\beq\label{PB-eq:Theo1}
    \left|(A\vv,\vw)\right|\leq\norm{f(\vec{k})}_{L^\infty(\Gamma_b)}\norm{v}_{L^2(\Gamma_b)}\norm{w}_{L^2(\Gamma_b)}
                             \leq\norm{f(\vk\,)}_{L^\infty(\Gamma_b)}\norm{\vv}_{l^2}\norm{\vw}_{l^2},
\eeq
where the last inequality follows from the orthonormality  of the Laplace-Beltrami eigenfunctions in the $J_0$-weighted ${\Gamma_b}$-inner product.
For a class of admissible interfaces, $\vec{k}$ is uniformly bounded in $L^\infty$, and since $f$ is smooth, we may find a constant $C>0$, depending
only on $K,\ell$ such that $\norm{f(\vec{k})}_{L^\infty(\Gamma_b)}\leq C$.  The result (\ref{l:A-normbdd}), follows.
To establish (\ref{l:A-normbdd01}) we assume $\Sigma_1=\Sigma_{b,0}$ and use the gap between $\beta_i$ for $i\in \Sigma_{b,0}$ and $\beta_j$ for $j\in\Sigma_{b,1}$
to bound the entries $A_{ij}$ directly. Indeed
\begin{align}
\beta_i A_{ij} &= \int_{\Gamma_b} f(\vk\,) \left(\Delta_s \Theta_i\right)\Theta_k J_0\,ds =
                      \int_{\Gamma_b} \Theta_i \Delta_s\left( f(\vk\,)\Theta_k \right)J_0\,ds, \\
                      & =\beta_j A_{ij} + \int_{\Gamma_b}\Theta_i \left(\Delta_s f(\vk\,) \Theta_j + 2\nabla_s f(\vk\,)\cdot \nabla_s \Theta_j\right)J_0\,ds.
\end{align}
However $\|\Delta_sf(\vk\,)\|_{L^\infty}$ and $\|\nabla_s f(\vk\,)\|_{L^\infty}$ are both uniformly bounded, while $\|\nabla_s\Theta_j\|_{L^2(\Gamma_b)}=\sqrt{\beta_j}$,
and we deduce that
\beq
 |A_{ij}|\leq  \frac{C\sqrt{\beta_j}}{\beta_i-\beta_j}\leq C \eps^{1+\frac{r}{8}},
 \eeq
where the last inequality follows from the bounds  $\beta_i\geq \alpha \eps^{-2}$ for $i\in\Sigma_{b,0}$  and $\beta_j< \alpha \eps^{-2+r/4}$ for $j\in\Sigma_{b,1}$ and
some $\alpha>0$ independent of $\eps>0.$
Applying Lemma\,\ref{l-MCor} yields (\ref{l:A-normbdd01}).
\end{proof}

We have established the following result.
\begin{prop}\label{PB-TheoCor}
Fix  a set $\cG^b_{K,\ell}$ of admissible interfaces. Then there exists $\gamma=\gamma(d,r)>1$ such that for all
$\eps$ sufficiently small, the eigenvalues of $M$, defined in (\ref{PB-Def:M}) are given, to $O(\eps^\gamma)$, where
$$\gamma=\frac{5-d}{2} + r\frac{d}{8}>1,$$
by its diagonal elements as indicated in (\ref{e:M00-asymp}). In particular the pearling eigenvalues are given to leading
order by the diagonal elements of $M^{0,0}$ given in (\ref{e:M00-asymp}).
\end{prop}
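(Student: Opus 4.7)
The plan is to treat the eigenvalue statement as a perturbation result for symmetric matrices. Writing $M = M_{\rm diag} + M_{\rm off}$, I would first observe that $M$ is symmetric, since $\LL_b$ is self-adjoint on $L^2(\Omega)$ and the basis of $X_b$ is $L^2$-orthonormal up to exponentially small corrections in $\eps$. Weyl's inequality for symmetric matrices then gives
\beq
\max_k \bigl|\lambda_k(M) - \lambda_k(M_{\rm diag})\bigr| \leq \|M_{\rm off}\|_{l^2_*},
\eeq
and because the eigenvalues of $M_{\rm diag}$ are precisely its diagonal entries, listed in Proposition \ref{p:M0}, the theorem reduces to the operator-norm bound $\|M_{\rm off}\|_{l^2_*} = O(\eps^\gamma)$.

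Next, I would fix $q = q(d,r)$ in the decomposition $M = M^0 + \eps^q \tilde M$ large enough that Lemma \ref{l-MCor} yields $\|\eps^q \tilde M\|_{l^2_*} \leq C\eps^{q - q_*} \leq C\eps^\gamma$, absorbing the $\tilde M$ contribution. It then remains to control the off-diagonal of $M^0$, which splits into within-block and cross-block pieces. The within-block entries of $M^{0,0}$ and $M^{1,1}$ have, by Proposition \ref{p:M0}, the form $\eps^2$ times an integral matrix whose entries read $\int_{\Gamma_b} f(\vec k)\Theta_j\Theta_k J_0\,ds$ for a smooth function $f$ of the bounded curvatures; the uniform bound (\ref{l:A-normbdd}) of Lemma \ref{l:bdM0} controls this by $O(\eps^2)$. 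The $O(\eps^{5/2})$ entry-level remainders are handled generically via Lemma \ref{l-MCor} at $O(\eps^{5/2 - q_*})$, which remains below $\eps^\gamma$ in the admissible range $r \in (0,1)$, $d \leq 3$.

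The principal obstacle is the cross-block $M^{0,1}$ (with $M^{1,0} = (M^{0,1})^T$), whose leading entries $\eps S_2 \int_{\Gamma_b} H_0 \Theta_j\Theta_k J_0\,ds$ are of the same order $\eps$ as the diagonal entries of $M$. A naive application of Lemma \ref{l-MCor} would only give $O(\eps^{1 - q_*})$, failing to decouple the pearling and meander sub-blocks. The essential saving is the refined estimate (\ref{l:A-normbdd01}) of Lemma \ref{l:bdM0}, which exploits the Laplace--Beltrami wavenumber gap between $\Sigma_{b,0}(r)$ (where $\beta_j \gtrsim \eps^{-2}$) and $\Sigma_{b,1}(r)$ (where $\beta_k \lesssim \eps^{-2 + r/4}$) to extract the extra factor $\eps^s$ with $s = (3-d)/2 + rd/8$. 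Applied to the $\eps S_2 H_0$ integral this produces operator-norm $O(\eps^{1+s}) = O(\eps^\gamma)$; the subleading $\cL_{jk}$ contribution from (\ref{e:cLb-asymp}) factors through a $K_1$-matrix of the same type and carries an extra $P_j + P_k = O(\eps^{r/2 - 1/2})$, hence lies at smaller order, and any $O(\eps^2)$ cross-block remainder that shares the $\int f(\vec k)\Theta_j\Theta_k J_0\,ds$ structure is absorbed at $O(\eps^{2+s})$. Combining contributions yields $\|M_{\rm off}\|_{l^2_*} = O(\eps^\gamma)$; in particular, the $\eps^\gamma$-separation between the $M^{0,0}$ and $M^{1,1}$ sub-blocks identifies the pearling eigenvalues with the diagonal of $M^{0,0}$, as claimed.
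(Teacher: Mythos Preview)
Your proposal is correct and follows essentially the same route as the paper, which simply cites Lemmas~\ref{l:bdM0} and~\ref{l-MCor} together with Proposition~\ref{p:M0} to bound the off-diagonal operator norm by $O(\eps^\gamma)$; you have merely unpacked those citations, made the Weyl-inequality step explicit, and separated the within-block, cross-block, and $\tilde M$ contributions. One small point worth flagging: the paper's text fixes $q=2$ when remarking that $\eps^q\tilde M$ is $o(\eps)$, but for $d=3$ this only yields $O(\eps^{1+r/4})$, which is coarser than $\gamma=1+3r/8$; your choice to take $q$ large enough that $q-q_*\geq\gamma$ is the correct reading and is consistent with the paper's ``we may fix $q\in\NN_+$''.
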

\begin{proof}
 Lemmas\,\ref{l:bdM0} and \ref{l-MCor}, in conjunction with the estimates in Proposition\,\ref{p:M0} establish an $O(\eps^\gamma)$ bound on the operator
norm of the off-diagonal components of $M$.
\end{proof}

\subsection{Bounds on the off-diagonal operators}\label{PB-sec:BoundOffDiag}

The off-diagonal operator, $B=\Pi\LL_b\tilde\Pi$, is defined in (\ref{def-BreakL}). Recalling the expansion (\ref{PB-mbbL}) of $\LL_b$
into its dominant part $\cL_b^2$ and asymptotically small, relatively bounded perturbations, our  first step is to bound the dominant part,
$B_0:=\Pi\cL_b^2\tilde\Pi$, of $B$. Since, $B_0$, and $B_0^T$ enjoy the same bounds, for  simplicity we address the latter.

\begin{corol}\label{PB-equivnorm}
Fix  a set $\cG^b_{K,\ell}$ of admissible interfaces, then for each $m\in\NN_+$ there exists $C>0$ such that for each $\Gamma_b\in\cG^b_{K,\ell}$ and all $\vv\in\RR^{N_b}$ we have
\beq \label{EN-z}
\|\partial_z^{2m} v\|_{L^2(\Omega)} + \| \eps^{2m}\Delta_s^m v\|_{L^2(\Omega)} \leq C \|\vv\,\|_{l^2},
\eeq
where $v \in X_b(\Gamma_b)$ takes the form (\ref{e:Xb-expand}).
\end{corol}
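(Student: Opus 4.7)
The plan is to exploit the tensor-product structure of elements of $X_b$, convert to whiskered coordinates on the reach $\Gamma_b^\ell$ (where $v$ is supported), and estimate each of the two terms by distributing derivatives via Leibniz, keeping careful track of the $\eps$-scaling. Writing
\[
 v(s,z) = \bar{v}_0(s)\spsi_{b,0}(s,z) + \bar{v}_1(s)\spsi_{b,1}(s,z),\qquad \spsi_{b,j} = \chi(z)\negJ\,\hpsi_{b,j}(z),
\]
with $\bar v_j = \sum_{k\in\Sigma_{b,j}(r)}\alpha_{j,k}\Theta_k$, I would use the factored form $\|u\|_{L^2(\Omega)}^2 = \int_{\Gamma_b}\!\int_{-\ell/\eps}^{\ell/\eps} u^2\,\tJ\,dz\,J_0\,ds$ together with the $L^2(J_0\,ds)$-orthonormality of $\{\Theta_k\}$ to reduce bounds on $\|\cdot\|_{L^2(\Omega)}$ to pointwise-in-$s$ bounds on $\tJ$-weighted norms of the $z$-profiles.

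For the $z$-derivative term I would apply Leibniz to $\partial_z^{2m}(\chi\negJ\hpsi_{b,j})$. From the factored form (\ref{def-tJ}) every $\partial_z^k\tJ$ for $k\ge 1$ satisfies $\partial_z^k\tJ = O(\eps^{k+1})$, and $\hpsi_{b,j}$ is a Schwartz-like eigenfunction of the Sturmian operator $L_{b,0}$. Hence the leading contribution is $\chi\negJ\partial_z^{2m}\hpsi_{b,j}$, whose $\tJ$-weighted squared norm on each fiber collapses (via $\negJ\cdot\negJ\cdot\tJ = 1$) to $\|\chi\partial_z^{2m}\hpsi_{b,j}\|_{L^2(\RR)}^2$, bounded by a constant depending only on $W$. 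The remaining Leibniz terms carry extra powers of $\eps$. Summing over $j$ and using orthonormality yields $\|\partial_z^{2m}v\|_{L^2(\Omega)}^2 \le C\|\valpha\|_{l^2}^2$.

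For the Laplace--Beltrami term I expand
\[
 \eps^{2m}\Delta_s^m(\bar v_j\spsi_{b,j}) = \eps^{2m}(\Delta_s^m\bar v_j)\spsi_{b,j} + \sum_{k=1}^{m}\binom{\cdots}{\cdots}\eps^{2m}\bigl(\nabla_s^{2(m-k)}\bar v_j\bigr)\cdot\bigl(\nabla_s^{2k}\spsi_{b,j}\bigr).
\]
The dominant term uses $\Delta_s^m\bar v_j = \sum_{k\in\Sigma_{b,j}(r)}(-\beta_k)^m\alpha_{j,k}\Theta_k$. The defining inequality $(\lambda_{b,j}-\eps^2\beta_k)^2\le\eps^r$ in (\ref{e:SigmaBl}), together with $\lambda_{b,0}=O(1)$ and $\lambda_{b,1}=0$, forces $\eps^2\beta_k = O(1)$ uniformly on $\Sigma_{b,j}(r)$. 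Therefore $(\eps^{2m}\beta_k^m)^2\le C$, and orthonormality yields $\|\eps^{2m}\Delta_s^m\bar v_j\|_{L^2(J_0\,ds)}^2 \le C\|\valpha_j\|_{l^2}^2$. Multiplying by $\|\spsi_{b,j}\|_{\tJ L^2} = O(1)$ gives the desired bound for the dominant term.

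For the cross terms, $s$-derivatives on $\spsi_{b,j}$ fall on $\negJ$ alone, and a direct computation shows $\nabla_s^\ell \ln \tJ$ is uniformly bounded in $L^\infty(\Gamma_b)$ for $\ell\le 2m$ because the admissibility class $\cG^b_{K,\ell}$ controls the curvatures in $W^{4,\infty}(S)$; consequently $\|\nabla_s^{2k}\spsi_{b,j}\|_{\tJ L^2}$ is $O(1)$. Each $\nabla_s$ on $\bar v_j$ costs at most $\sqrt{\beta_k}$, and on $\Sigma_{b,j}(r)$ the product $\eps^{2k}\beta_k^{m-k}\le (\eps^2\beta_k)^{m-k}\eps^{2k\cdot 0}$ combined with the $\eps^{2m}$ prefactor remains bounded; in fact every such cross term picks up a positive power of $\eps$ and is therefore negligible. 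Adding the two contributions and summing over $j=0,1$ establishes (\ref{EN-z}).

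The main obstacle is the bookkeeping in the Leibniz expansion of $\eps^{2m}\Delta_s^m$: one must verify that every cross term involving $\nabla_s$-derivatives of $\tJ^{-1/2}$ is controlled uniformly across the asymptotically large and tightly packed index sets $\Sigma_{b,0}(r)$ and $\Sigma_{b,1}(r)$. The two ingredients that make this work uniformly in $\eps$ are the admissibility bound on the curvatures, which keeps the $s$-derivatives of $\tJ$ harmless, and the defining inequality of $\Sigma_{b,j}(r)$, which keeps $\eps^2\beta_k$ bounded regardless of the wave-number.
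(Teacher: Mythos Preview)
Your argument is correct, but it differs from the paper's in the mechanism used for the $\partial_z^{2m}$ term. The paper does not expand by Leibniz at all; instead it observes that
\[
\|\partial_z^{2m}v\|_{L^2(\Omega)}^2 \;=\; \sum_{i,j}\alpha_{I(i),i}\,\alpha_{I(j),j}\int_{\Gamma_b}\!\bigl(\partial_z^{2m}\spsi_{b,I(i)},\partial_z^{2m}\spsi_{b,I(j)}\bigr)_{\tJ}\,\Theta_i\Theta_j\,J_0\,ds,
\]
recognizes the inner $\tJ$-integral as a smooth function $f_{I(i),I(j)}(\vk_b)$ of the curvatures, and then invokes Lemma~\ref{l:bdM0} to bound the resulting quadratic form by $C\|\vv\|_{l^2}^2$. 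This packages the $j=0,1$ cross terms and the dependence of $\tJ^{-1/2}$ on $s$ into a single matrix estimate, whereas your Leibniz route handles these by the triangle inequality and explicit fiberwise bounds. Both are valid; the paper's version is shorter once Lemma~\ref{l:bdM0} is in hand, while yours is self-contained. For the $\eps^{2m}\Delta_s^m$ term the two approaches coincide in spirit---both rest on the uniform bound $\eps^2\beta_k=O(1)$ over $\Sigma_{b,j}(r)$---but the paper simply writes $\eps^{2m}\Delta_s^m v=\sum_j \eps^{2m}\beta_j^m v_j\Theta_j\spsi_{b,I(j)}$ and appeals to orthonormality, effectively suppressing the $s$-dependence of $\spsi_{b,I(j)}$; your explicit treatment of the Leibniz cross terms coming from $\nabla_s\tJ^{-1/2}$ is in fact more thorough than what the paper records. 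One small caveat: your bound on $\nabla_s^{2k}\ln\tJ$ in $L^\infty$ implicitly requires $2m$ tangential derivatives of the curvatures, while admissibility only supplies $W^{4,\infty}$ control; this limits the stated ``for each $m\in\NN_+$'' to $m\le 2$, which is all that is actually used downstream.
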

\begin{proof}
The bound on the first term on the left-hand side of  (\ref{EN-z}) follow from applications of Lemma \ref{l:bdM0} with the choices
 $f_{j,k}(\vk_b) = (\partial_z^{2m}\spsi_{b,j},\partial_z^{2m}\spsi_{b,k})_\tJ$
 for $j,k$ running over the values $0, 1$. For the second term we observe from the form of $v$  that
 \beq
    \eps^{2m}\Delta_s^m v = \sum_{j\in\Sigma_{b}} \eps^{2m}\beta_j^m v_j\Theta_j  \spsi_{b,I(j)},
 \eeq
 and hence $\eps^{2m}\Delta_s^m v$ is localized on $\Gamma_b$ and indeed lies in $X_b$.
 From the form, (\ref{e:SigmaBl}), of $\Sigma_{b,0}$ and $\Sigma_{b,1}$ it follows that $\eps^{2m}\beta_j^m=\lambda_{b,I(j)}^m+O(\eps^{r/2})$.
 The result (\ref{EN-z}) follows from the orthonormality of the Laplace-Beltrami eigenmodes in the $J_0$-weighted $L^2(\Gamma_b)$.
\end{proof}

We use this to establish the following bound.

\begin{prop}\label{PB-prop:bound}
Fix $\Gamma_b\in\cG_{k,\ell}^b$, then for $\eps>0$ sufficiently small there exists a constant $C>0$ such that
\beq
\label{e:tPiLL-bnd}
\| \tilde\Pi\LL_b v\|_{L^2(\Omega)} \leq C \eps \|\vv\|_{l^2},
\eeq
for all $\vv\in\RR^{N_{b}}$ with corresponding $v\in X_b$ given by (\ref{e:Xb-expand}).
\end{prop}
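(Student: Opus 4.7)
The plan is to decompose $\LL_b = \cL_b^2 + \eps\LL_1 + R_\eps$ as in (\ref{PB-mbbL}), where $R_\eps$ collects $O(\eps^2)$ smooth-coefficient remainder terms on the reach of $\Gamma_b$, and then estimate each of the three resulting pieces of $\tilde\Pi\LL_b v$ separately. For the term $R_\eps v$, the bound $\|R_\eps v\|_{L^2(\Omega)}\le C\eps^2\|\vv\|_{l^2}$ is immediate from the uniform boundedness of the coefficients of $R_\eps$ and the (approximate) $L^2$-orthonormality of the basis of $X_b$. For $\eps\LL_1 v$, the explicit form (\ref{eq:L1}) consists of $\eta_1\partial_z^2 v$, $\eta_2 W''(U_b)v$, a zero-order term, and two compositions involving $\cL_b$ with the bounded first-order operator $H_0\partial_z - W'''(U_b)U_{b,1}$. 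Using Corollary~\ref{PB-equivnorm} to control $\|\partial_z^2 v\|_{L^2}$ and $\|\eps^2\Delta_s v\|_{L^2}$ by $C\|\vv\|_{l^2}$, and noting that when $\cL_b$ is applied to a function of the form $g(s,z)\partial_z^\alpha v$ the $\eps^2\Delta_s$ component produces factors $-\eps^2\beta_k$ which are $O(1)$ on $\Sigma_{b,j}(r)$ by (\ref{PB-Pk}) and (\ref{e:SigmaBl}), I obtain $\|\eps\LL_1 v\|_{L^2(\Omega)}\le C\eps\|\vv\|_{l^2}$, and the perpendicular projection inherits the same bound trivially.

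The heart of the proof is controlling $\tilde\Pi\cL_b^2 v$, for which the total $\|\cL_b^2 v\|_{L^2}$ is only $O(\eps^{r/2})$ and hence larger than $\eps$; the projection must extract the additional smallness. The key observation is that $\cL_b$ maps $X_b$ almost back into $X_b$. Writing $\spsi_{b,I(k)}\Theta_k = \chi(z)\tJ^{-1/2}(s,z)\hpsi_{b,I(k)}(z)\Theta_k(s)$ and applying $\cL_b = L_{b,0} + \eps^2\Delta_s$, I would establish an expansion
\beq
\cL_b\bigl(\spsi_{b,I(k)}\Theta_k\bigr) \;=\; \bigl(\lambda_{b,I(k)} - \eps^2\beta_k\bigr)\,\spsi_{b,I(k)}\Theta_k \;+\; \cR_k,
\eeq
where the remainder $\cR_k$ comes from $z$- and $s$-derivatives of $\tJ^{-1/2}$, the commutator $[L_{b,0},\chi\tJ^{-1/2}]$, and the cutoff $\chi$ (which contributes only exponentially small terms because $\hpsi_{b,I(k)}$ is localized). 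The expansion $\tJ^{-1/2} = \eps^{-1/2}(1 + \tfrac12\eps z(\sum k_i) + O(\eps^2 z^2))$ derived from (\ref{def-tJ}) shows that $\cR_k$ is $o(1)$ in $L^2(\Omega)$ uniformly in $k$. Applying $\cL_b$ a second time,
\beq
\cL_b^2(\spsi_{b,I(k)}\Theta_k) \;=\; (\lambda_{b,I(k)} - \eps^2\beta_k)^2\,\spsi_{b,I(k)}\Theta_k \;+\; (\lambda_{b,I(k)} - \eps^2\beta_k)\,\cR_k \;+\; \cL_b \cR_k.
\eeq
The first term lies in $X_b$ and is annihilated by $\tilde\Pi$. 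The second is bounded by $\eps^{r/2}\|\cR_k\|_{L^2}$, far below $\eps$. For the final piece $\cL_b\cR_k$, the same mechanism that controlled $\eps\LL_1 v$ --- namely, the fact that $\eps^2\beta_k = \lambda_{b,I(k)}+O(\eps^{r/2}) = O(1)$ on $\Sigma_{b,j}(r)$ --- delivers uniform $L^2$ boundedness. Summing $\sum_k \alpha_{j,k}\cL_b^2(\spsi_{b,I(k)}\Theta_k)$ and exploiting the near-orthogonality of distinct Laplace--Beltrami modes $\Theta_k$ then yields $\|\tilde\Pi\cL_b^2 v\|_{L^2}\le C\eps\|\vv\|_{l^2}$, which combined with the previous two pieces gives (\ref{e:tPiLL-bnd}).

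The principal technical obstacle I anticipate is the bookkeeping required to bound $\cL_b\cR_k$ uniformly in $k\in\Sigma_b(r)$: the index sets $\Sigma_{b,j}(r)$ are asymptotically large, the curvatures $\vec k_b$ of $\Gamma_b$ couple different $\Theta_k$ through $\tJ^{-1/2}$, and the $\eps^2\Delta_s$ portion of $\cL_b$ could in principle reintroduce large $\nabla_s\Theta_k$ terms. Closing the argument relies essentially on the $W^{4,\infty}$ control of the curvatures and of the second fundamental form imposed by Definition~\ref{def:admissible}, which provides precisely the regularity of $\tJ^{-1/2}$ needed to match each factor of $\nabla_s\Theta_k$ against an explicit $\eps$ arising from the expansion of $\tJ^{-1/2}$. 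Once this is done, the three contributions $\tilde\Pi\cL_b^2 v$, $\eps\tilde\Pi\LL_1 v$, and $\tilde\Pi R_\eps v$ add to give (\ref{e:tPiLL-bnd}).
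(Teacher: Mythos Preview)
Your core idea---that the leading part of $\cL_b^2 v$ lies in $X_b$ and is annihilated by $\tilde\Pi$, leaving only curvature-induced corrections---matches the paper's. But two steps in your execution do not close as written.

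First, the summation over $k$. The remainders $\cR_k$ (and $\cL_b\cR_k$) are \emph{not} near-orthogonal across $k$: each carries $s$-dependent factors such as $K_1(s)$ coming from derivatives of $\tJ^{-1/2}$, so $\langle\cR_j,\cR_k\rangle_{L^2(\Omega)}\neq 0$ for $j\neq k$, and a term-by-term bound multiplied by $N_b\sim\eps^{1-d}$ blows up. The paper's remedy is not orthogonality but reorganization: it writes $\tilde\Pi\cL_b^2 v = \eps^{1/2}(Q_0\ov_0 + Q_1\ov_1)$ where each $Q_j = \sum_{m\leq 3}\eps^m Q_{j,m}$ is a differential operator in $\nabla_s$ with bounded coefficients, then integrates out $z$ and applies elliptic regularity together with the bound $\eps^m\|(1-\Delta_s)^{m/2}\ov_j\|_{L^2(\Gamma_b)}\leq C\|\valpha_j\|_{l^2}$, which follows from $\eps^2\beta_k=O(1)$ on $\Sigma_{b,j}(r)$. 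Your final paragraph correctly senses that each $\nabla_s\Theta_k$ must be paired with an explicit $\eps$, but the mechanism that makes the sum converge is ``operator acting on $\ov_j$'', not mode orthogonality.

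Second, your treatment of the tail $R_\eps$. It is not a multiplication operator with bounded coefficients; it contains genuine differential pieces such as $\eps^3(\cL_b\circ zD_{s,2} + zD_{s,2}\circ\cL_b)$ arising from $\Delta_G-\Delta_s$, which are fourth-order in $(s,z)$. The paper avoids any direct estimate by writing
\[
\LL_b = (\cL_b^2-\lambda_*) + \eps\,\tilde\LL_b(\cL_b^2-\lambda_*)^{-1}(\cL_b^2-\lambda_*) + \lambda_*,
\]
for $\lambda_*\in\rho(\cL_b^2)$, using relative boundedness of $\tilde\LL_b:=\LL_b-\cL_b^2$ with respect to $\cL_b^2$ so that $\tilde\LL_b(\cL_b^2-\lambda_*)^{-1}$ is $L^2$-bounded, and then controlling $\|(\cL_b^2-\lambda_*)v\|$ via the already-proved estimate $\|\tilde\Pi\cL_b^2 v\|\leq C\eps\|\vv\|_{l^2}$ together with the $O(\eps)$ bound on $\Pi\cL_b^2 v$ from Proposition~\ref{PB-TheoCor}. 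This resolvent trick collapses the entire lower-order tail onto the single estimate~(\ref{e:cL2-bnd}), and is considerably cleaner than the term-by-term bookkeeping your plan would require for $\eps\LL_1$ and $R_\eps$.
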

\begin{proof}

Any $v \in X_b$ has the form (\ref{e:Xb-expand}), which from (\ref{e:psibJ-exp}) admits the expansion
\beq
v = \eps^{-\frac12}\left( \ov_0(s)\hpsi_{b,0} + \ov_1(s)\hpsi_{b,1}\right)\left(1 - \frac12 \eps z K_1(s) +  O(\eps^2)\right).
\eeq
Turning to the expansion (\ref{PB-mbbL}) of $\LL_b$, the action of $\LL_b$ on $v$ has leading order term $\cL_b^2(\ov_0\hpsi_{b,0} + \ov_1\hpsi_{b,1})\in X_b$ which lies in the kernel of $\tilde\Pi$, and only higher order terms, in which at least one $\partial_z$ or $\nabla_s$ derivative falls upon $\tJ$ remain. Consequently we may write
\beq \label{e:Pi0cLv}
\tilde\Pi_0\cL_b^2 v = \eps^{\frac12}\left(Q_0\ov_0+Q_1\ov_1\right),
\eeq
where for $j=0, 1$ the differential operators $Q_j$ admit an expansion of the form
 \beq \label{e:Qj-form}
 Q_j:= Q_{j,0} + \eps Q_{j,1}+\eps^2 Q_{j,2} +\eps^3 Q_{j,3},
 \eeq
and each $Q_{j,k}$ is a $k$'th order differential operator in $\nabla_s$ with coefficients
that are smooth and decay exponentially in $z$ and have uniform $L^\infty(\Gamma_b)$  bounds in $s$ that are independent of
$\eps>0$ sufficiently small. Taking the $L^2(\Omega)$ norm of (\ref{e:Pi0cLv}), we transform to the local variables and integrate out
the $z$ dependence, the result is an expression of the form
\beq
 \|\tilde\Pi\cL_b^2 v \|_{L^2(\Omega)}^2 =  \eps^2 \int_{\Gamma_b}\left( \oQ_0\ov_0+\oQ_1\ov_1\right)^2\, J_0\, ds,
 \eeq
 where the $\oQ_j$ are third order differential operators of the form (\ref{e:Qj-form}) with $L^\infty(\Gamma_b)$ coefficients that only depend
 upon $s$.
In particular we deduce that
\beq
\| \eps^{k} \oQ_{j,k}\ov_j \|_{L^2(\Gamma_b)} \leq \|Q_{j,k}(1-\Delta_s)^{-k/2}\|_{L_*^2(\Omega)} \|\eps^{k} (1-\Delta_s)^{k/2} \ov \|_{L^2(\Gamma_b)},
\eeq
where the norm $\|\cdot\|_{L^2_*(\Omega)}$ denotes the induced $L^2\mapsto L^2$ operator norm on $L^2(\Gamma_b)$. From
classic elliptic regularity theory, the operator norm may be bounded, independent of $\eps>0$ sufficiently small,
while Lemma\,\ref{l-MCor} implies the existence of $C>0$ such that
\beq
\eps^{k}\|(1-\Delta_s)^{k/2}\ov\|_{L^2(\Gamma_b)} \leq C \|\vv\|_{l^2},
\eeq
for $k=0,1, 2, 3.$
In particular we deduce the existence of $C>0$, chosen independent of $\eps>0$ sufficiently small that
\beq
\label{e:cL2-bnd}
 \|\tilde{\Pi} \cL^2_b v \|_{L^2(\Omega)} \leq C \eps \|\vv\|_{l^2}.
\eeq

To extend this bound to  the full operator, we fix~$\lambda_*\in\rho(\cL_b^2)$, the resolvent set of $\cL_b^2$, and
use the expansion (\ref{PB-mbbL}) to write $\LL_b$ as
\beq
    \LL_b = (\cL_b^2-\lambda_*)+\eps\tilde\LL_b(\cL_b^2-\lambda_*)^{-1}(\cL_b^2-\lambda_*)+\lambda_*,
\eeq
where $\tilde\LL_b:=\LL_b-\cL_b^2$ denotes the lower order, relatively compact terms in $\LL_b.$
Act $\LL_b$ on $v\in X_b$ and project with $\tilde\Pi$, recalling that $\tilde{\Pi}\Pi =0$, we have the estimate
\beq
    \norm{\tilde\Pi\LL_b\Pi v}_{L^2(\Omega)}   \leq \norm{\tilde\Pi\cL_b^2v}_{L^2(\Omega)}+\eps\norm{\tilde\Pi\tilde\LL_b(\cL_b^2-\lambda_*)^{-1}(\Pi+\tilde\Pi)(\cL_b^2-\lambda_*)v}_{L^2(\Omega)}.
\eeq
Since the perturbation $\tilde{\LL}_b$ is relatively compact with respect to $\cL_b^2$ it follows that the composition
$\tilde{\LL}_b(\cL_b^2-\lambda_*)^{-1}$ is bounded uniformly in the $L^2(\Omega)$ operator norm, and from (\ref{e:cL2-bnd}) we
have an $O(\eps)$ contribution from the $\tilde{\Pi}\cL_b^2$ terms while a uniform bound on $\Pi\cL_b^2$ acting on
$X_b$ follows from Proposition\,\ref{PB-TheoCor} and the scaling (\ref{e:M00-asymp}) of the diagonal elements of the matrix $M^0$.
The result, (\ref{e:tPiLL-bnd}) follows.
\end{proof}

\subsection{Localization of the geometric and pearling eigenvalues of $\LL_b$}\label{PB-sec:Spectrum}

The estimates from the previous section permit the localization of the small eigenvalues of $\LL_b$ as perturbations of
pearling and geometric eigenvalues of $M$. While the off-diagonal terms $B$ and $B^T$ have $O(\eps)$ operator
bounds, their contributions to the small spectrum of $\LL_b$ is muted by the fact that
the restricted operator $\LL^\perp$ is uniformly $L^2(\Omega)$ coercive on $X_b^\bot$, satisfying $\sigma(\LL^\perp)\subset (C\eps^r, \infty)$
for some $C>0$, where $\eps^r$ is the bound in the definition of $\Sigma_{b,0}(r)$ and $\Sigma_{b,1}(r)$, see~\cite{HAYRAPETYAN2014SPECTRA} for details.

To localize the pearling eigenvalues of $\LL_b$ we consider $\Lambda\in\sigma(\LL_b)\cap(-\infty, C\eps^r)\subset \rho(\LL^\perp)$ for which the
resolvent operator $R(\Lambda,\LL^\perp)=(\Lambda-\LL^\perp)^{-1}$ is boundedly invertible.
We project the eigenvalue problem (\ref{e:EVP}) with $\Pi$ and $\tilde{\Pi}$ and decompose the eigenfunction $\Psi_b$
according to (\ref{e:EVP-decomp}). Using the invertability of $\LL^\perp-\Lambda$  we solve for $\Psi^\perp$, reducing
the eigenvalue problem for the system (\ref{PB-BreakL}) to
the equivalent finite dimensional system for $v_1$,
\beq\label{PB-eq:reducedEq}
   (M-\Lambda)v_1 = B(\LL^\perp-\Lambda)^{-1}B^T v_1.
\eeq

 We take the $l^2$-norm of both sides of~(\ref{PB-eq:reducedEq}),
and from the norm estimate (\ref{e:cL2-bnd}) on $B$ and $B^T$ we obtain
\beq\label{PB-eq:boundl2norm}
    \|(M-\Lambda)v_1 \|_{l^2}\leq c\eps^2 \norm{R(\Lambda,\LL^\perp)}_{L^2(\Omega)}\|v_1\|_{l^2}.
\eeq
Since $\LL^\perp$ is self-adjoint, we have the standard estimate
\beq\label{PB-eq:boundR}
    \norm{R(\Lambda;\LL^\perp)}_{L^2(\Omega)}\leq \left(\text{dist}(\Lambda,\sigma(\LL^\perp))\right)^{-1}\leq\frac{1}{|\Lambda-C\eps^r|}.
\eeq
However, $M$ is also self-adjoint and we have the lower bound,
\beq\label{PB-eq:DistBound}
    \|(M-\Lambda)v_1 \|_{l^2} \geq \textrm{dist}(\sigma(M),\Lambda)\|v_1 \|_{l^2},
\eeq
and combining the upper and lower bounds yields the localization
\beq
\label{e:M-cLb}
    \text{dist}(\sigma(M),\Lambda)\leq \frac{c\eps^2}{|\Lambda-C\eps^r|}\leq c\eps^{2-r},\quad \text{for } \Lambda<\frac{C}{2}\eps^r.
\eeq
In particular the difference between the pearling spectrum of $\LL_b$ and the eigenvalues of $M$ is smaller than their generically
$O(\eps)$ size. The optimal control on the eigenvalues of $\LL_b$ is achieved when the $2-r$ exponent in (\ref{e:M-cLb}) balances with the $\gamma$ given
in Proposition\,\ref{PB-TheoCor}, which occurs for the choice $r=r^*_b=4\frac{d-1}{d+8}.$ Moreover the converse inclusion also holds,
 if $u$ is an eigenfunction of $M$ corresponding to an eigenvalue $\lambda_0<\delta$, then by classical regular perturbation results there is
 a nearby $v_1=u+O(\eps^2)$ and $\lambda=\lambda_0+O(\eps^2)$ which satisfies (\ref{PB-eq:reducedEq}). We have established the following result.

\begin{theo}\label{PB-PearlingCond}
For space dimension $d=2, 3$, fix a class $\cG_{K,\ell}^b$ of admissible codimension-one interfaces, and let $\LL_b$ be the second variation of
$\cF$ about the associated bilayer morphology $u_b(\cdot; \Gamma_b, \hlam)$ given by (\ref{e:ub-exp}). Fix $r=r_b^*:=4 \frac{d-1}{d+8}$ in the
definition (\ref{e:SigmaBl}) of $\Sigma_{b,0}(r)$ and $\Sigma_{b,1}(r)$.  Then there exists $C>0$ such that for all $\eps>0$ sufficiently small,
the set $\sigma(\LL_b)\cap (-\infty, C\eps^{r_b^*})$ consists of the union of pearling eigenvalues
$ \bigl\{\Lambda_{b;0,k}\,\bigl |\, k\in\Sigma_{b,0}(r_b^*)\bigr\}$
and the geometric eigenvalues $\bigl\{\Lambda_{b;1,k}\,\bigl | \, k\in\Sigma_{b,1}(r_b^*)\bigr\}$,
which satisfy the asymptotic expansions
\begin{align}
\label{e:Pevs-Bl}
	\Lambda_{b;0,k} &= \eps\left(\frac{P_{k,0}^2-\hlam S_b}{\|\hpsi_{b,0}\|_{L^2(\RR)}^2}-\eta_d\lambda_{b,0}\right)+O\!\left(\eps^{2-r_b^*}\right), & {\rm for\,} k\in\Sigma_{b,0}(r_b^*),\\
\label{e:Gevs-Bl}
		\Lambda_{b;1,k} &= \eps \frac{P_{k,1}^2}{\|\spsi_{b,1}\|_{L^2(\RR)}^2} + O\!\left(\eps^{2-r_b^*}\right), & {\rm for}\, k\in\Sigma_{b,1}(r_b^*),
\end{align}
where all quantities are as defined in Proposition\,\ref{p:M0}.  Moreover,
the associated codimension-one bilayer interface is stable with respect to the pearling eigenvalues, if
\beq\label{BP-eq:PearlingStableCond}
	\hlam S_b+\eta_d\lambda_{b,0}\|\hpsi_{b,0}\|_{L^2(\RR)}^2<0,
\eeq
and is pearling unstable if this quantity is positive.
\end{theo}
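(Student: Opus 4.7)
The plan is to synthesize the three pillars already established in this section: the Schur--complement reduction of the eigenvalue problem to the finite--dimensional system (\ref{PB-eq:reducedEq}), the $O(\eps)$ operator norm bound on $B^T=\tilde\Pi\LL_b\Pi$ from Proposition \ref{PB-prop:bound}, and the diagonal approximation of $M$ from Proposition \ref{PB-TheoCor}, combined with the uniform coercivity $\sigma(\LL^\perp)\subset(C\eps^r,\infty)$ on $X_b^\perp$ cited from \cite{HAYRAPETYAN2014SPECTRA}. Concretely, I would first observe that for $\Lambda\in\sigma(\LL_b)\cap(-\infty,\tfrac{C}{2}\eps^r)$ the resolvent $R(\Lambda,\LL^\perp)$ is boundedly invertible with $\|R(\Lambda,\LL^\perp)\|\leq c\eps^{-r}$, so that the reduction (\ref{PB-eq:reducedEq}) combined with the $O(\eps)$ bounds on $B$ and $B^T$ yields the localization $\mathrm{dist}(\sigma(M),\Lambda)\leq c\eps^{2-r}$ already derived in (\ref{e:M-cLb}). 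Combining this with the bound $\mathrm{dist}(\sigma(M),\sigma(M^0_{\mathrm{diag}}))\leq C\eps^\gamma$ from Proposition \ref{PB-TheoCor}, where $\gamma=\tfrac{5-d}{2}+\tfrac{rd}{8}$, the total error $O(\eps^{2-r}+\eps^\gamma)$ is optimized by solving $2-r=\gamma$, which for $d\in\{2,3\}$ gives the stated value $r_b^*=\tfrac{4(d-1)}{d+8}<1$.

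Next I would convert the diagonal entries of $M$ in (\ref{e:M00-asymp}) and (\ref{e:M11-asymp}) to genuine eigenvalues by accounting for the Gram-matrix normalization: although the basis $\{\spsi_{b,I(j)}\Theta_j\}$ of $X_b$ is orthogonal, its $L^2(\Omega)$ squared norms are $\|\hpsi_{b,I(j)}\|_{L^2(\RR)}^2$ modulo terms of order $e^{-\ell\alpha_-/\eps}$, as the factored Jacobian $J=J_0\tJ$ in (\ref{def-tJ}) exactly cancels the $\tJ^{-1/2}$ weights in $\spsi_{b,I(j)}$. Dividing the diagonal entries of $M^{0,0}$ and $M^{1,1}$ by these norms converts (\ref{e:M00-asymp}) into (\ref{e:Pevs-Bl}) and (\ref{e:M11-asymp}) into (\ref{e:Gevs-Bl}). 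For the converse direction (that every diagonal entry below $\tfrac{C}{2}\eps^{r_b^*}$ perturbs to an actual eigenvalue of $\LL_b$), I would apply regular perturbation theory to (\ref{PB-eq:reducedEq}) viewed as a compact perturbation of $Mv=\Lambda v$, since $B(\LL^\perp-\Lambda)^{-1}B^T=O(\eps^{2-r})$ is uniformly small on $X_b$.

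Finally, the pearling stability criterion follows by inspecting the sign of (\ref{e:Pevs-Bl}) across $k\in\Sigma_{b,0}(r_b^*)$. Since $P_{k,0}^2\geq 0$ and by the definition (\ref{e:SigmaBl}) the index set admits detuning parameters $P_{k,0}^2$ that cluster down toward zero as $\eps\to 0$, the infimum of $\{\Lambda_{b;0,k}\}$ is achieved, to leading order, at the minimal $P_{k,0}^2$, yielding $\inf_k\Lambda_{b;0,k}=\eps\bigl(-\hlam S_b/\|\hpsi_{b,0}\|^2-\eta_d\lambda_{b,0}\bigr)+o(\eps)$. Positivity of this infimum for all $k\in\Sigma_{b,0}(r_b^*)$, which characterizes pearling stability, is therefore equivalent to $\hlam S_b+\eta_d\lambda_{b,0}\|\hpsi_{b,0}\|_{L^2(\RR)}^2<0$, while the strict reverse inequality yields at least one negative $\Lambda_{b;0,k}$ and hence instability. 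The main obstacle in this program is not any single step of the synthesis but the uniform control required to extend through an index set of cardinality $\sim\eps^{-(d-1)(1-r/4)}$; this obstacle has already been absorbed into the $l^\infty\to l^2_*$ operator estimate of Lemma \ref{l-MCor} and the curvature-dependent off-diagonal bounds of Lemma \ref{l:bdM0}, so the remaining assembly is essentially mechanical.
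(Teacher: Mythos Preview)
Your proposal is correct and follows essentially the same route as the paper: the same Schur-complement reduction (\ref{PB-eq:reducedEq}), the same $O(\eps)$ bound on $B$ from Proposition~\ref{PB-prop:bound}, the same resolvent estimate $\|R(\Lambda,\LL^\perp)\|\leq c\eps^{-r}$ from the coercivity of \cite{HAYRAPETYAN2014SPECTRA}, the same localization (\ref{e:M-cLb}), and the same balancing of $2-r$ against $\gamma$ to select $r_b^*$. Your treatment in fact makes two steps more explicit than the paper does---the Gram-matrix normalization converting the diagonal entries of $M^{0,0}$ into (\ref{e:Pevs-Bl}), and the argument that $\inf_k P_{k,0}^2\to 0$ forces the sign of the infimal pearling eigenvalue to be governed by (\ref{BP-eq:PearlingStableCond})---both of which the paper leaves to the reader.
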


\section{Pearling eigenvalues of filament morphologies}\label{Sec:Pearling-Pr}

In this section we localize the pearling eigenvalues associated to filament morphologies obtained from the dressing  of admissible
codimension-two hypersurfaces embedded in $\RR^3$. Fixing an admissible codimension-two hypersurface, $\Gamma_\fil$, we study the second variational derivative, $\LL$, of $\cF$ evaluated at the associated filament morphology $u_\fil$  defined in (\ref{e:PR}). While the general form of $\LL$ is given by (\ref{e:LL}),
when the linearization acts on functions $u\in H^4(\Omega)$ whose support lies in the reach, $\Gamma_\fil^\ell$ of $\Gamma_\fil$, we may exploit the notation of section \ref{sec:coord-pr} to express  $\LL$ in the equivalent form
\beq
    \LL_\fil = (L_{\fil}^{u_\fil}-\eps D_z+\eps^2\partial_G^2)^2+\eps\left[\eta_1(\Delta_z-\eps D_z+\eps^2\partial_G^2)-\eta_2W''(u_\fil)\right]-\left(\Delta_zu_\fil-W'(u_\fil)+\eps D_zu_\fil\right)W'''(u_\fil),
\eeq
where~$L_{\fil}^{u_\fil}$ was defined in~(\ref{e:Lf-uf-def}). From the definition (\ref{e:partialG2}) of $\partial_G^2$ and the form (\ref{e:PrJ}) of
$\tilde{J}_f$ we arrive at the expansion,
\beq
\label{e:pG-expand}
\partial_G^2 = \partial_s^2+\eps \vz\cdot(2\vk\partial_s^2+\partial_s\vk\partial_s) + O(\eps^2).
\eeq
The two dominant operators in $\LL_\fil$ are~$L_{\fil}$, defined in (\ref{e:Lf-def}), and $\eps^2\partial_s^2$, which motivates the introduction of
\beq\label{e:PrFullL}
    \cL_\fil:=L_{\fil}+\eps^2\partial_s^2,
\eeq
which balances the sum of the mostly negative self-adjoint operator $L_{\fil}$, and the non-positive line diffusion
operator associated to $\Gamma_\fil$. With this notation, we may write $\LL_\fil$ as the square of $\cL_\fil$ plus lower order terms
\begin{align}
\label{LL-def}
    \LL_\fil =& \left(\cL_{\fil}-\eps\left(D_z+W'''(U_\fil)U_{\fil,1})\right)\right)^2\\
    &+\eps\left[\eta_1(\Delta_z-\eps D_z+\eps^2\partial_G^2)-\eta_2W''(u_\fil)\right]-\left(\Delta_zu_\fil-W'(u_\fil)+\eps D_zu_\fil\right)W'''(u_\fil)+O(\eps^2),\nonumber
\end{align}
and note that the error terms in~$\LL_\fil$ are relatively compact and uniformly bounded with respect to~$\cL^2_\fil$.
In this section we analyze the spectrum of~$\cL_\fil$ on spaces where the positive spectra of $L_{\fil}$ balance the negative spectra of $\eps^2\partial_s^2$.

\begin{definition}\label{cs-eq:ScaledEigenvaluesPr}
We define the \textbf{scaled eigenfunctions}~$\spsi_{\fil,k}:=\chi(z) J_\fil^{-1/2}\hpsi_{\fil,k}$, where~$\hpsi_{\fil,k}$ is the~$k^{th}$ eigenfunction of~$L_{\fil}$ and~$\chi(z)$ is a~$C^\infty$ cut-off function.
\end{definition}


To analyze the spectrum of~$\LL_\fil$ we follow \cite{DAI2015COMPETITIVE}, expand the Cartesian $z$-Laplacian in polar form
\beq
    L_\fil:=\Delta_z-W''(\phi_\fil)=\partial_R^2+\frac{1}{R}\partial_R+\frac{1}{R^2}\partial_\theta^2-W''(\phi_\fil),
\eeq
and introduce the spaces~$\cZ_m$, defined by
\beq\label{CS-eq:ZmSpace}
    \cZ_m:=\{f(R)\cos(m\theta)+g(R)\sin(m\theta)~\big|~ f,g\in C_c^\infty(0,\infty),m\in\NN\}.
\eeq
These spaces are invariant under  $L_\fil$, and mutually orthogonal in~$L^2(\Omega)$. Moreover, on these spaces the action of
$L_\fil$ reduces to
\beq
    L_\fil(f(R)\cos(m\theta)+g(R)\sin(m\theta)) = \cos(m\theta)L_{\fil,m}f+\sin(m\theta)L_{\fil,m}g,
\eeq
where, consistent with the notation in (\ref{e:Lf0-def}), we have introduced
\beq\label{CS-eq:Lm}
    L_{\fil,m}:=\frac{\partial^2}{\partial R^2}+\frac{1}{R}\frac{\partial }{\partial R}-\frac{m^2}{R^2}-W''(U_\fil).
\eeq
Each operator~$L_{\fil,m}$ is self-adjoint in the~$R$-weighted inner product,
\beq
 (f,g)_R:=\int_0^\infty f(R)g(R)R\,dR,
 \eeq
and the operator~$L_{\fil,1}$ has a 1-dimensional kernel spanned by its ground state $\partial_R \phi_\fil>0$. We deduce that
$L_{\fil,1}\leq0$, and since $(L_{\fil,m}f,f)_R<(L_{\fil,1}f,f)_R$ for $m>1$, it follows $L_{\fil,m}<0$ and in particular
$L_{\fil,m}$ is boundedly invertible for all $m>1$. Conversely, $(L_{\fil,0}f,f)_R>(L_{\fil,1}f,f)_R$ and in particular
 $(L_{\fil,0}\partial_R\phi_{\fil},\partial_R\phi_{\fil})_R>0$ so that $L_{\fil,0}$ must have a nontrivial positive subspace.
We  denote the eigenfunctions and eigenvalues of $L_{\fil,m}$ by $\{\spsi_{\fil,m,j}\}_{j=0}^\infty$ and $\{\lambda_{\fil,m,j}\}_{j=0}^\infty$, respectively,
and drop the subscript $\fil$ when doing so does not cause confusion.
It follows from equation~(\ref{e:FLcp}) that
$$ {\rm span}\{\partial_{z_1}U_\fil,\partial_{z_2}U_\fil\}= {\rm span}\{\phi_\fil^\prime(R) \cos\theta,\phi_\fil^\prime(R) \sin\theta\}\subset \cZ_1\cap \ker L_{f}.$$
The following assumption guarantees that the kernel of $L_f$ is indeed two dimensional,  and that the operators $L_{\fil,j}$ are strictly
negative for $j\ge 1$; in particular, $\lambda_{0,0}>0,$  $\lambda_{0,1}=0$, and $\lambda_{0,j}<0$  for every~$j\ge1$.

\begin{assum}\label{CS-Assum:Kernel}
The operator~$L_{\fil,0}$ has no kernel and a one-dimensional positive eigenspace, the operator $L_{\fil, 1}$ is negative except for
a one dimensional kernel spanned by $\{ \phi_{\fil}^\prime\}$.
\end{assum}

We consider the eigenvalue problem
\beq\label{e:EVP-pr}
    \LL_\fil\Psi_{\fil} = \Lambda_{\fil}\Psi_{\fil},
\eeq
associated to the second variation,~$\LL_\fil$, of~$\cF$ about a filament morphology $u_\fil$, given by (\ref{LL-def}).
We show that there exists constant $U_\fil>0$, independent of $\eps>0$, such that the eigenfunctions associated to $\LL_\fil$ corresponding
to eigenvalues $\Lambda_{\fil}<U_\fil$, comprise two sets: the \emph{pearling eigenmodes}, enumerated as $\{\Psi_{\fil;0,n}\}_{n=N_1}^{N_2}$ and the \emph{meander eigenmodes}, enumerated as $\{\Psi_{\fil;1,n}\}_{n=0}^{N_3}$.  On the filament $\Gamma_\fil$, where $z=0$, the codimension-two
Laplacian $\partial_G^2$, introduced in (\ref{e:partialG2}), reduces to the line-diffusion operator $\partial_s^2,$ where $s$ denotes arc-length along $\Gamma_\fil.$ We denote the corresponding eigenfunctions of $-\partial_s^2$ by $\{\Theta_{\fil,n}\}_{n=0}^\infty$ with eigenvalues $\beta_{\fil,n}\geq0,$
and introduce the sets
\beq\label{e:SigmaPr}
\Sigma_{\fil,j}(r_f):= \left\{n\in\NN_+\,\bigl|~(\lambda_{\fil,j}-\eps^2\beta_{\fil,n})^2\leq \eps^{r_\fil} \right\},\qquad j=0,1,
\eeq
for $r_\fil\in(0,1)$. The line-diffusion eigenvalues grow like $\beta_{\fil,n}\tilde n^2$, and the sizes $N_{\fil,j}:=|\Sigma_{\fil,j}(r_\fil)|$
satisfies the same asymptotic relations as $N_{b,j}$ given in (\ref{Nj0-asymp}), for $d=3$ and $r$ replaced with $r_\fil.$

The spectrum of $\LL_\fil$ cannot be localized by a regular perturbation expansion since the eigenvalues are asymptotically
close together. A perturbation analysis requires bounds on the spectrum that are uniform in $\eps\ll 1$. We establish the tensor product
formulation of the pearling and meander eigenmodes,
\beq
 \Psi_{\fil,j,n} = \spsi_{\fil,j}(z)\Theta_{\fil,n}(s) + O(\eps),
 \eeq
and introduce the~$L^2(\Omega)$ orthogonal projection~$\Pi$ onto the space
\beq\label{e:Xfil}
    X_\fil(\Gamma_\fil):=\text{span}\{\spsi_{\fil,j}(z)\Theta_n(s)~|~j=0,1,~\text{and } n\in\Sigma_{\fil,j}(r_\fil)~\text{respectively}\},
\eeq
which approximates the eigenspaces of $\LL_\fil$ corresponding to pearling and meander eigenmodes.
 The space $X_\fil$ has dimension $N_\fil=N_{\fil,0}+N_{\fil,1},$ see (\ref{e:SigmaPr}). In particular we remark that basis of $X_\fil$ is orthonormal in $L^2(\Omega)$, up to exponentially small terms. Indeed, dropping the $\fil$ subscript,
the basis elements are localized on $\Gamma_\fil$ and their inner product can be written as
\beq
\left(\spsi_k\Theta_n, \spsi_j\Theta_m\right)_{L^2(\Omega)}=\int_{\Gamma_\fil}\int_{0}^\frac{\ell}{\eps} \spsi_j(z,s)\spsi_k(z,s)\Theta_n(s)\Theta_m(s)J_\fil(z,s)dz\,ds= \delta_{nm}\delta_{jk}
                                                                                +O(e^{-\ell/\eps}),
\eeq
where the second equality follows from Definition \ref{cs-eq:ScaledEigenvaluesPr}, of the scaled eigenfunctions, and the factored form (\ref{e:PrJ}) of the
Jacobian. We change to whiskered coordinates in the Laplacian according to (\ref{e:PrClosedLap}), and use the form (\ref{e:PR}) of $u_\fil$ to expand
the operator $\LL_\fil$ as
\beq\label{e:PP-mbbL}
\bL_{\fil}= \cL_\fil^2+\eps\LL_{\fil,1}+\eps^2\LL_{\fil,2}+O(\eps^3),
\eeq
where~$\cL_\fil$ is defined in~(\ref{e:PrFullL}), and
\beq
    \LL_{\fil,1} := -\cL_\fil\circ (D_z+W'''(U_\fil)U_{\fil,1})-(D_z+W'''(U_\fil)U_{\fil,1}+\eta_1)\circ\cL_\fil+\eta_dW''(U_\fil)-(D_zU_\fil-\cL_\fil U_{\fil,1})W'''(U_\fil).\label{e:cLfil1}
\eeq
For~$i\geq1$, the operators $\LL_{\fil,i}$ are compact relative to $\cL_\fil$ in~$H_0^1(\Gamma_\fil^\ell)$.

We localize the pearling and geometric eigenvalues of $\LL_\fil$  via an analysis of its projection onto $X_\fil$,
searching for solutions of the eigenvalue problem,~(\ref{e:EVP-pr}), via the decomposition
\beq
\label{e:PR-EVP-decomp}
    \Psi_\fil = v_\fil+ \Psi_\fil^\perp,
\eeq
where
\beq
\label{e:Pr-v-alpha}
v_\fil=\sum_{k\in\Sigma_{\fil,0}(r_\fil)}\alpha_{0,k}\spsi_{\fil,0}\Theta_k+\sum_{k\in\Sigma_{\fil,1}(r_\fil)}\alpha_{1,k}\spsi_{\fil,1}\Theta_k \in X_\fil
\eeq
and  $\Psi_{\fil}^\perp\in X_\fil^\bot$.  We use the projections $\Pi$ and $\tilde{\Pi}=I-\Pi$ to decompose the operator~$\LL_\fil$ into the $2\times2$ block form,
\beq\label{PP-BreakL}
    \begin{bmatrix}
        M & B           \\[0.3em]
        B^T & \LL^\perp
      \end{bmatrix},
\eeq
where
\begin{align}
\label{def-BreakL-fil}
    M:=\Pi\LL_\fil\Pi,\quad
    B:=\Pi\LL_\fil\tilde{\Pi},\quad
    \LL^\perp:=\tilde \Pi\LL_\fil\tilde \Pi.
\end{align}
The decomposition hinges upon the uniform $L^2$ coercivity of the operator on $\LL_\fil$ on the space $X_\fil^\bot$.
This result was established for bilayers in \cite{HAYRAPETYAN2014SPECTRA}, and is assumed here.

\begin{assum}\label{CS-Assum:coercive}
Fix $K, \ell>0$. There exists $0<r_{\fil,0}<1$ and $\rho>0$ such that for all $r_\fil\in[0,r_{\fil,0})$,  and all admissible $\Gamma_f\in\cG^\fil_{K,\ell}$, we have
\beq
 \left( \LL_\fil w, w\right)_{L^2(\Omega)} \geq \rho \eps^{r_\fil} \|w\|_{L^2(\Omega)}^2, \label{LLfil-CA}
 \eeq
 for all $w\in X_{\fil}^\bot(\Gamma_\fil, r_\fil).$
\end{assum}

In section~\ref{PP-sec:BoundM} we analyze the spectrum of $M$, and show that its spectrum coincides to
leading order with the small eigenvalues of $\LL_\fil$.

\subsection{Eigenvalues of the pearling matrix~$M:=\Pi\LL_\fil\Pi$}\label{PP-sec:BoundM}
Denote~$v\in X_\fil$ by
\beq
\label{e:Xfil-expand}
v(s,z)=\overline{v}_0(s)\spsi_{\fil,0}+\ov_1(s)\spsi_{\fil,1},
\eeq
where for $j=0, 1$ we have introduced
\beq
\ov_j:=\sum_{k\in\Sigma_{\fil,j}(r_\fil)}\alpha_{j,k}\Theta_k.
\eeq
Representing the coefficients of $v$ by  $\valpha_j=(\alpha_{j,k})_{k\in\Sigma_{\fil,j}(r_\fil)}$, for $j=0, 1$ and $\valpha=(\valpha_0,\valpha_1)^T$,
 the action of $\LL_\fil$  on $v$ can be represented by
the matrix $M\in\mbbR^{N_\fil\times N_\fil}$  with entries
\beq\label{PP-Def:M}
    M_{jk}:=(\LL_\fil\spsi_{\fil,I(j)}\Theta_j,\spsi_{\fil,I(k)}\Theta_k)_{L^2(\Omega)},
\eeq
where the index function $I(j)$ takes the value $k$ if $j\in\Sigma_{\fil,k}(r_\fil)$ for $k=0, 1$.
From the expansion of~$\LL_\fil$, (\ref{e:PP-mbbL}), we may fix $q\in\NN_+$ and
group terms in $M$ into two classes
\beq\label{PP-def:M}
    M= M^0 +\eps^q\tilde M,
\eeq
where
\begin{align}
    \label{PP-M0}M^0_{jk}&=(\cL_\fil^2\spsi_{\fil,I(j)}\Theta_j,\spsi_{\fil,I(k)}\Theta_k)_{L^2(\Omega)}+
     \sum_{i=1}^q\eps^i(\LL_i\spsi_{\fil,I(j)}\Theta_j,\spsi_{\fil,I(k)}\Theta_k)_{L^2(\Omega)},\\
    \tilde M_{jk}&=\sum_{i\geq q}\eps^{(i-q)}(\LL_i\spsi_{\fil,I(j)}\Theta_j,\spsi_{\fil,I(k)}\Theta_k)_{L^2(\Omega)}.
\end{align}
For $N_\fil:=N_{\fil,0}+N_{\fil,1}$, Lemma~\ref{l-MCor} applied with $d= 3$, $r_\fil>0$, and $q=2$, implies that an $O(1)$ bound on the
$l^\infty(N_\fil\times N_\fil)$ norm of $\tilde{M}$ implies a $o(\eps)$ bound on the $l^2(N_\fil)\mapsto l^2(N_\fil)$ operator norm, denoted $l^2_*$, of $\eps^q\tilde{M}.$
We further divide $M^0$ into sub-blocks
\beq\label{PP-M0-sub}
M^0 =    \begin{bmatrix}
        M^{0,0} & M^{0,1}           \\[0.3em]
        M^{1,0} & M^{1,1}
      \end{bmatrix},
\eeq
with the $M^{j,k}$ sub-block corresponding the inner products with entries from $\Sigma_{\fil,j}(r_\fil)$ and $\Sigma_{\fil,k}(r_\fil)$ for $j,k=0,1.$
The $M^{0,0}$ and $M^{1,1}$ sub-blocks are called the pearling and geometric sub-blocks, respectively. The following proposition characterizes the leading order entries of the sub-blocks in terms of system dependent parameters
\begin{prop}
\label{p:M0pr}
For an admissible  filament morphology $u_\fil(\cdot; \Gamma_\fil, \hlam)$ with far-field parameter $\hlam$,  the entries of the pearling, geometric,
and cross-term sub-blocks of $M^0$ defined in (\ref{PP-M0-sub}) take the form
\beq \label{e:pr-M00-asymp}
    M^{0,0}_{jk}=
    \begin{cases}
        \eps\left(P_{f,k,0}^2-\hlam S_\fil-\eta_d{\left(\norm{\hpsi_{\fil,0}'}_{L_R}^2+\lambda_{\fil,0}\norm{\hpsi_{\fil,0}}_{L_R}^2\right)}\right) + O(\eps\sqrt{\eps}) &~\text{ if }j=k,\\
      -\eps^2 \int_{\Gamma_\fil}\left(\norm{\nabla_z\hpsi_{\fil,0}}_{L^R}^2+S_{\fil,1,0}\right)|\vkappa|^2\Theta_k\Theta_j\,ds
        +O(\eps^2\sqrt{\eps})&~\text{ if }j\neq k,
    \end{cases}
\eeq
for $j,k\in\Sigma_{\fil,0}(r_\fil),$ where $S_\fil$ and $S_{\fil,1,0}$ are given in (\ref{e:S-pr}) and (\ref{e:S1j-def-fil}) respectively,
\beq \label{e:pr-M11-asymp}
    M^{1,1}_{jk} =
    \begin{cases}
        \eps P_{k,1}^2 + O(\eps^2) &~\text{ if }j=k,\\
        -\eps^2\int_{\Gamma_\fil}\left(\norm{\nabla_z\hpsi_{\fil,1}}_{L^R}^2+ S_{\fil,1,1}\right)|\vkappa|^2\Theta_k\Theta_j\,ds +O(\eps^2\sqrt{\eps})&~\text{ if }j\neq k,
    \end{cases}
\eeq
for $j,k\in\Sigma_{\fil,1}(r_\fil),$ where $S_{\fil,1,1}$ is given in (\ref{e:S1j-def-fil}), and
\beq\label{e:pr-M01}
    M^{0,1}_{jk} =    -\eps S_{\fil,2} \cdot\int_{\Gamma_\fil} \vkappa \Theta_j\Theta_k \, ds +\cL_{\fil;jk}+ O(\eps^2),
\eeq
for $j\in\Sigma_{\fil,0}(r_\fil)$, $k\in\Sigma_{\fil,1}(r_\fil),$ where $S_{\fil,2}$ is given in (\ref{e:pr-S2j-def}) and $\cL_{\fil;jk}=O(\eps^{\frac{3}{2}})$ is given in (\ref{e:cLfil-asymp}). The vector curvature $\vkappa$ of $\Gamma_\fil$ is defined in (\ref{CS-eq:VecKappa}),
while the detuning constants,
\beq\label{PB-Pk}
    P_{\fil,k,I(k)} := \eps^{-1/2}(\lambda_{\fil,I(k)}-\eps^2\beta_k),
\eeq
arise in the definition of $\Sigma_{\fil,I(k)}(r_\fil)$ in (\ref{e:SigmaPr}).
\end{prop}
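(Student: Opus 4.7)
The plan is to follow the proof of Proposition \ref{p:M0} for the codimension-one case, adapting it to the codimension-two geometry of filaments. The three structural changes are: (i) the whiskered coordinates are now $(s,z_1,z_2)$ with Jacobian $J_\fil=\eps^2\tJ_\fil$, $\tJ_\fil=1-\eps z\cdot\vkappa$; (ii) the relevant symmetry is no longer even/odd in a single $z$ variable but rather the angular $\cZ_m$ decomposition of Assumption \ref{CS-Assum:Kernel}, with $\hpsi_{\fil,0}\in\cZ_0$ radial and $\hpsi_{\fil,1}$ in $\cZ_1$ with $\cos\theta,\sin\theta$ content; and (iii) the tangential diffusion is the scalar line-diffusion $\eps^2\partial_s^2$ in place of a full Laplace-Beltrami operator. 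The error control is uniform in $\eps$ by exactly the same mechanism as in the bilayer case, invoking Lemma \ref{l-MCor} and Lemma \ref{l:bdM0}.

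I would first compute the $\cL_\fil^2$ contribution $\cL_{\fil;jk}$. Since each scaled eigenfunction is supported in $\Gamma_\fil^\ell$, changing to whiskered coordinates and exploiting self-adjointness of $\cL_\fil$ in the $\tJ_\fil$-weighted inner product gives
\begin{equation*}
\cL_{\fil;jk}=\eps^2\int_{\Gamma_\fil}\!\int \cL_\fil\!\left(\frac{\hpsi_{\fil,I(j)}\Theta_j}{\sqrt{\tJ_\fil}}\right)\cL_\fil\!\left(\hpsi_{\fil,I(k)}\Theta_k\sqrt{\tJ_\fil}\right)dz\,ds.
\end{equation*}
Using the eigenvalue identity $\cL_\fil\hpsi_{\fil,I(j)}\Theta_j=\eps^{1/2}P_{\fil,j,I(j)}\hpsi_{\fil,I(j)}\Theta_j$ together with the expansions $\tJ_\fil^{\pm1/2}=1\mp\tfrac12\eps z\cdot\vkappa+O(\eps^2)$ and $\nabla_z\tJ_\fil=-\eps\vkappa+O(\eps^2)$, one reads off the leading terms and their commutator corrections. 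Angular parity in $\theta$ kills cross-products of the form $\hpsi\,\nabla_z\hpsi$ when $I(j)=I(k)$, producing $\eps P_{k,I}^2\delta_{jk}$ on the diagonal and the off-diagonal contribution $-\eps^2\norm{\nabla_z\hpsi_{\fil,I}}_{L^R}^2\int_{\Gamma_\fil}|\vkappa|^2\Theta_j\Theta_k\,ds$, in direct analogy with (\ref{e:cLb-asymp}).
\begin{equation*}
\cL_{\fil;jk}=\eps P_{j,I}P_{k,I}\delta_{jk}-\eps^2\norm{\nabla_z\hpsi_{\fil,I}}_{L^R}^2\int_{\Gamma_\fil}|\vkappa|^2\Theta_j\Theta_k\,ds+O(\eps^{5/2}),\qquad I(j)=I(k)=I.
\end{equation*}

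Next I turn to the $\eps\LL_{\fil,1}$ inner products. For the diagonal pearling entries $j=k\in\Sigma_{\fil,0}(r_\fil)$, the Jacobian correction is higher order and the computation reduces to a rotationally invariant integral of $\LL_{\fil,1}$ against the radial ground state. The $\eta_1\Delta_z$ term in $\LL_{\fil,1}$ integrates by parts against $\hpsi_{\fil,0}$ to produce $-\eta_1\norm{\nabla_z\hpsi_{\fil,0}}_{L^R}^2$, and combining with the $\eta_2W''(U_\fil)$ contribution yields the grouping $-\eta_d\bigl(\norm{\hpsi_{\fil,0}'}_{L_R}^2+\lambda_{\fil,0}\norm{\hpsi_{\fil,0}}_{L_R}^2\bigr)$; the background-correction terms involving $U_{\fil,1}=\hlam\Phi_{\fil,2}-\eta_dL_{\fil,0}^{-1}(R\phi_\fil'/2)$ supply the $-\hlam S_\fil$ piece, producing the diagonal formula in (\ref{e:pr-M00-asymp}). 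The analogous computation for $M^{1,1}$ diagonal entries gives $\eps P_{k,1}^2+O(\eps^2)$ because $\LL_{\fil,1}$ paired against $\hpsi_{\fil,1}$ vanishes at leading order by the translational-mode identity inherited from (\ref{e:FLcp}). For the off-diagonal entries of $M^{0,0}$ and $M^{1,1}$, rotational invariance forces the nontrivial $s$-dependence to enter only through curvature-dependent coefficients; parity considerations analogous to the bilayer case eliminate all but the $|\vkappa|^2$-type integrals with weight constants $S_{\fil,1,I}$ playing the role of $S_{1,j}$.

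The main obstacle is the cross-block $M^{0,1}$. Since $\hpsi_{\fil,0}$ is purely radial while $\hpsi_{\fil,1}\in\cZ_1$ carries $\cos\theta,\sin\theta$ content, the $(z_1,z_2)$ integral of any rotationally invariant operator against $\hpsi_{\fil,0}\hpsi_{\fil,1}$ vanishes identically, so every surviving contribution must come from a rotationally non-invariant term. The convective operator $D_z=(\vkappa/\tJ_\fil)\cdot\nabla_z$ appearing in $\LL_{\fil,1}$ supplies precisely such a coupling: $\nabla_z$ acting on the radial $\hpsi_{\fil,0}$ produces a vector that pairs against $\hpsi_{\fil,1}$ through the $\vkappa$-contraction, generating an integral that is vector-valued in the normal plane. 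Tracking powers of $\eps$ and contracting against $\vkappa$ produces the claimed leading term $-\eps S_{\fil,2}\cdot\int_{\Gamma_\fil}\vkappa\,\Theta_j\Theta_k\,ds$, with $S_{\fil,2}$ assembling the $\nabla_z$-gradient pairing of $\hpsi_{\fil,0}$ against the two angular components of $\hpsi_{\fil,1}$ as in (\ref{e:pr-S2j-def}); the subleading $\cL_{\fil;jk}=O(\eps^{3/2})$ piece arises from the $\cL_\fil^2$ computation of the first step now applied to modes of different angular symmetry, where the eigenvalue prefactors $P_{j,0}+P_{k,1}$ no longer annihilate. The residual $O(\eps^2)$ error follows from combining Lemma \ref{l-MCor} with the operator-norm bounds of Lemma \ref{l:bdM0}.
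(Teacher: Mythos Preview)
Your proposal is correct and follows essentially the same route as the paper: compute $\cL_{\fil;jk}$ via self-adjointness and Jacobian expansions, then handle the $\eps\LL_{\fil,1}$ contributions case by case using $s$-dependence and parity/symmetry considerations, exactly mirroring the bilayer Proposition~\ref{p:M0}. Your emphasis on the angular $\cZ_m$ decomposition as the filament substitute for $z$-parity is a clean way to phrase what the paper does more implicitly with ``even/odd in $z$'' language; the two viewpoints are equivalent since $\hpsi_{\fil,0}\in\cZ_0$ is radial and $\hpsi_{\fil,1}\in\cZ_1$ has $\cos\theta,\sin\theta$ content.

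One small misattribution in your cross-block discussion: the leading $O(\eps)$ contribution to $M^{0,1}$ does not come from the operator $D_z$ acting on $\hpsi_{\fil,0}$. The terms $-\cL_\fil\circ D_z$ and $-D_z\circ\cL_\fil$ in $\LL_{\fil,1}$ each pick up a factor $\eps^{1/2}P_{\fil,\cdot,\cdot}$ from the $\cL_\fil$ and hence land at $O(\eps^{3/2})$ in $M$. The genuine $O(\eps)$ piece is the \emph{multiplication} operator $-(D_zU_\fil)W'''(U_\fil)=-\,\vkappa\!\cdot\!\nabla_z\phi_\fil\,W'''(\phi_\fil)+O(\eps)$, which is why $S_{\fil,2}$ in (\ref{e:pr-S2j-def}) carries $\partial_R\phi_\fil$ rather than $\partial_R\hpsi_{\fil,0}$. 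Your mechanism---$\vkappa$-contraction breaking rotational invariance to couple $\cZ_0$ with $\cZ_1$---is right; just trace it through the potential term, not the convective one.
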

\begin{proof}
The scaled eigenfunctions have support within $\Gamma_\fil^\ell$ and hence we may change to the whiskered coordinates. Using the explicit form of the Jacobian,~(\ref{e:PrJ}), the self adjointedness of $\cL_\fil$  in the $L^2(\Omega)$ inner product, and the expansion of $\cL_\fil$, (\ref{e:PrFullL}),
\beq
 \cL_{\fil;jk}:=(\cL_\fil^2\spsi_{I(j)}\Theta_j,\spsi_{I(k)}\Theta_k)_{L^2(\Omega)} =
 \int_{\Gamma_\fil}\int_{0}^{\ell/\eps} \cL_\fil\left( \Frac{\hpsi_{I(j)}\Theta_j}{\sqrt{J_\fil}}\right) \cL_\fil\left( \hpsi_{I(k)} \Theta_k \sqrt{J_\fil}\right)\, dz\, ds,
 \eeq
 where here and below we have dropped the $\fil$ subscript on $\hpsi$ and $\Theta$ and their eigenvalues.
 We deduce from (\ref{e:PrJ}) that $J_\fil^p$ is $O(\eps^{2p})$ for any $p\in\NN_+$, while
 $\nabla_z J_\fil =\eps^3\vkappa$, $\partial_s J_\fil=\eps^3z\cdot\nabla_s\vkappa$, $\partial_s^2 J_\fil =\eps^3z\cdot\partial_s^2 \vkappa$
 and $\Delta_z J_\fil = 0$. Moreover
 \beq
 \cL_\fil \hpsi_{I(j)}\Theta_j = (\lambda_{I(j)}-\eps^2 \beta_j)\hpsi_{I(j)}\Theta_j = \eps^{\frac12} P_{\fil, j,I(j)} \hpsi_{I(j)}\Theta_j,
\eeq
so we identify the leading order terms
\begin{align}
 \cL_\fil\left( \Frac{\hpsi_{I(j)}\Theta_j}{\sqrt{\tJ}}\right) &= \eps^{\frac12}P_{\fil,j,I(j)}  \Frac{\hpsi_{I(j)}\Theta_j}{\sqrt{J_\fil}}
 -
 \nabla_z\hpsi_{I(j)}\cdot\vkappa\Theta_j
  +O(\eps^3),\label{e:cLJ-12}\\
 \cL_\fil\left( \hpsi_{I(k)}\Theta_k\sqrt{J_\fil}\right) &= \eps^{\frac12}P_{\fil,k,I(k)}  \hpsi_{I(k)}\Theta_k \sqrt{J_\fil} + \eps^2 \nabla_z\hpsi_{I(k)}\cdot\vkappa\Theta_k +
 O(\eps^{3}),\label{e:cLsqrtJ}
 \end{align}
 where we used the estimate $\|\partial_s \Theta_j\|_{L^2(\Gamma_\fil)} =O(\eps^{-1})$ to bound the $\partial_s \Theta_j\cdot \partial_s J_\fil$ terms in the error. Combining these,
and using the parity considerations to eliminate the $z$ integrals of $\hpsi \nabla_z \hpsi$ we find
\begin{align}\label{e:cLfil-asymp}
 \cL_{\fil;jk} &=
 \begin{cases}
  \eps P_{\fil,j,I}P_{\fil,k,I}\delta_{jk}  -\eps^2 \|\nabla_z\hpsi_{I}\|_{L^2}^2 \int_{\Gamma_\fil} |\vkappa|^2 \Theta_j\Theta_k\,ds + O(\eps^{\frac{7}{2}}) & \text{if}~I(j)=I(k)=I,\\
  \eps^{\frac{3}{2}}(P_{\fil,j,I(j)}+P_{\fil,k,I(k)})\int_0^\infty \hpsi_{I(j)}\nabla_z \hpsi_{I(k)}\,dz \cdot\int_{\Gamma_\fil}\vkappa\Theta_j\Theta_k\,ds +O(\eps^\frac52)& \text{if}~ I(j)\neq I(k).
  \end{cases}
  \end{align}
 The reduction of the $\LL_{\fil,1}$ inner products are considered case by case. For $j=k\in\Sigma_{\fil,0}(r_\fil)$ we only require leading-order terms. Equations (\ref{e:cLJ-12}) and (\ref{e:cLsqrtJ}) imply that the first two terms in the inner product $(\LL_{\fil,1}\spsi_{I(j)}\Theta_j,\spsi_{I(k)}\Theta_k)_{L^2(\Omega)}$ are of lower order, and the leading order is given by
\begin{align}
\left(\LL_{\fil,1}\spsi_{0}\Theta_j,\spsi_{0}\Theta_k\right)_{L^2(\Omega)} &=
        -\Bigl(\bigl((L_\fil U_{\fil,1})W'''(\phi_\fil)-\eta_dW''(\phi_\fil)\bigr)\spsi_{0}\Theta_j,\spsi_{0}\Theta_k\Bigr)_{L^2(\Omega)}+O(\sqrt\eps)\\
        &
        =-\hlam S_\fil-\eta_d\left(\norm{(\hpsi_{\fil,0})'}_{L_R}^2+\lambda_{\fil,0}\norm{{\hpsi}_{\fil,0}}_{L_R}^2\right)+O(\sqrt\eps),\label{e:PrL1Terms}
\end{align}
where we have introduced the 'background factor'
 \beq\label{e:S-pr}
    S_\fil:=2\pi\int_0^\infty\Phi_{\fil,1}W'''(\phi_\fil)\hpsi_{\fil,0}^2\,RdR,
\eeq
which characterizes the impact of the the far-field value of $u_\fil$ on the pearling eigenvalues.
Equation~(\ref{e:PrL1Terms}), in conjunction with (\ref{e:cLfil-asymp}) yields the $j=k$ case of (\ref{e:pr-M00-asymp}).

 For $j\neq k$, the $\eps^2\partial_s^2$ term in $\LL_\fil$ induce lower order contributions to the inner product.
This is clear, unless the term falls entirely upon $\Theta_i$ or upon $\Theta_j$,
in which case it becomes $\eps^2\beta_i$ or $\eps^2\beta_j$ (which are $O(1)$) but the integral is lower
order because of orthonormality. The term $\eps^2 \partial_s\Theta_i\partial_s \Theta_j$ might formally appear to be leading order, yet an integration by parts returns us to the prior case.
Therefore, the inner product takes the form
 \beq
  \left( \LL_{\fil,1} \spsi_{\fil,I(j)}\Theta_j, \spsi_{\fil,I(k)}\Theta_k\right)_{L^2(\Omega)} = \int_{\Gamma_\fil} \left( \LL_{\fil,1} \frac{\hpsi_{\fil,I(j)}}{\sqrt{J_\fil}},  \frac{\hpsi_{\fil,I(k)}}{\sqrt{J_\fil}}\right)_{\tJ_\fil} \Theta_j\Theta_k \, ds,
 \eeq
which is zero unless the $J_\fil$ inner product has non-trivial $s$ dependence. The only leading order term in $\LL_{\fil,1}$ with non-trivial
$s$ dependence is $-D_zU_\fil=-\vkappa \cdot\nabla_zU_\fil + O(\eps)$, however $\nabla_zU_\fil$ is odd in $z$ so parity issues yield
a non-zero $z$-integral only if $I(j)\neq I(k)$, for which we find
\beq \label{e:M01-est-fil}
 \left( \LL_{\fil,1} \spsi_{\fil,I(j)},  \spsi_{\fil,I(k)}\right)_{J_\fil} =  S_{\fil,2} \vkappa(s)  
 + O(\eps),
\eeq
where we have introduced the scalar constant
\begin{align}
    S_{\fil,2} =&2\pi \int_0^\infty W'''(\phi_\fil){\hpsi_{\fil,0}}{\hpsi_{\fil,1}}\partial_R \phi_\fil \, RdR>0.
    \label{e:pr-S2j-def}
\end{align}

For $I(j)=I(k)$ and~$j\neq k$ the leading order terms occur at next order, and we seek terms which introduce $s$ dependence whose combination
preserves even parity in $z$.  The operator $\LL_{\fil,1}$ can be decomposed into ``even'' operators which preserve $z$ parity and ``odd'' operators
which map odd parity functions to even ones, and conversely. The odd component is the single term
$$[\LL_{\fil,1}]_{\rm odd}=\vkappa\cdot\nabla_z\phi_\fil W'''(\phi_\fil)+O(\eps^2),$$
while its even component can be further separated into terms, $\LL_{\fil,1}^0$ with no $s$ dependence and a single term
$$[\LL_{\fil,1}]_{\rm even}=[\LL_{\fil,1}^0]_{\rm even}+ [D_z\phi_\fil W'''(\phi_\fil)]_{\rm even}= [\LL_{\fil,1}^0]_{\rm even}+\eps (\vz\cdot\vkappa) \kappa\cdot\nabla_zU_\fil W'''(U_\fil) +O(\eps^3),$$
Viewing the integrand in $ \left( \LL_{\fil,1} \spsi_{\fil,I(j)},  \spsi_{\fil,I(k)}\right)_{J_\fil} $ as the action of $\LL_{\fil,1}$ on $\spsi_{\fil,I(j)}$ subsequently multiplied by $\spsi_{\fil,I(k)}$ and $J_\fil$,  we recall (\ref{e:PrJ}) and expand
\beq \label{e:psiJ-exp-fil}
\spsi_{\fil,I(j)}=\hpsi_{\fil,I(j)}J_\fil^{-\frac12} = \hpsi_{b,I(j)}\eps^{-1}\left(1 + \frac12 \eps z\cdot\vkappa +  O(\eps^2)\right).
\eeq
At leading order we obtain
\beq\label{e:LL1-even-fil}
  \left([\LL_{\fil,1}]_{\rm even} \spsi_{\fil,I(j)},  \spsi_{\fil,I(k)}\right)_\tJ = \eps S_{\fil,1,j} |\vkappa|^2 + O(\eps^2),
\eeq
where we have introduced the system dependent quantity
\beq
\label{e:S1j-def-fil}
    S_{\fil,1,j}:=\int_{\RR^2} z\cdot\nabla_z(W''(\phi_\fil))\hpsi_{\fil,I(j)}^2\,dz= 2\pi \int_0^\infty \partial_R (W''(\phi_\fil)) \psi_{\fil,I(j)}^2R^2\,dR,
 \eeq
 for $j=0,1.$
 For the odd part of $\LL_1$, overall even $z$ parity requires one odd term from $\spsi_{b,I(j)}$, $\spsi_{b,I(k)}$ or $J_\fil$, consequently
\beq \label{e:LL1-odd-fil}
  \left([\LL_{\fil,1}]_{\rm odd} \spsi_{\fil,I(j)},  \spsi_{\fil,I(k)}\right)_{J_\fil} =O(\eps^2),
\eeq
and combining (\ref{e:LL1-even-fil}) and (\ref{e:LL1-odd-fil}) yields the $j\neq k$ cases of both (\ref{e:pr-M00-asymp}) and (\ref{e:pr-M11-asymp}).
\end{proof}

The diagonal entries, $M^0_{\text{diag}}$, of $M^0$ are $O(\eps)$, and we wish to show that the matrix of off-diagonal entries,
$M^0_{\text{off-diag}}:=M^0-M^0_{\rm diag}$ has an $o(\eps)$ $l^2_*$ norm, so that the eigenvalues of
$M^0$ correspond with its diagonal entries to leading order. From Lemma \ref{l-MCor} we have appropriate operator norm bounds for all
off-diagonal entries of $O(\eps^2\sqrt{\eps})$. The remainder of the off-diagonal entries can be combined into matrices with $\Theta_j\Theta_k$
factored entries that enjoy bounds independent of the size of the space $X_\fil.$

\begin{lemma}\label{l:prM0}
    Let~$\Gamma_\fil\in\cG^\fil_{K,\ell}$ be an admissible hypersurface, then its curvatures $\vkappa\in W^{2,\infty}(S)$. Let~$f:\RR^2\rightarrow\RR$ be a smooth function, and then there exists $C>0$ such that for any pair of index sets $\Sigma_j\subset\NN_+$ with $|\Sigma_j|=N_j$ for $j=1,2$,
  the matrix~$A\in\RR^{N_1\times N_2}$, defined by its entries
    \beq
        A_{ij} = \int_{\Gamma_\fil} f(\vkappa\,)\Theta_i\Theta_j\,ds,
    \eeq
    where $\{\Theta_k\}_{k=0}^\infty$ are the eigenfunctions of line diffusion operator associated to $\Gamma_\fil$; satisfies
    \beq
        \norm{A}_{\RR^{N_2}\mapsto \RR^{N_1}}\leq C.
    \eeq
 If moreover $\Sigma_1=\Sigma_{\fil,0}(r_\fil)$ and $\Sigma_2=\Sigma_{\fil,1}(r_\fil)$,
 then we have the estimate
 \beq
  \norm{A}_{\RR^{N_2} \mapsto \RR^{N_1}}\leq C\eps^s,
 \eeq
 where $s:=\frac{3}{8}r_\fil>0.$
\end{lemma}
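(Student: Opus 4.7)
The plan is to mirror the bilayer proof of Lemma~\ref{l:bdM0} almost verbatim, with the Laplace--Beltrami operator on $\Gamma_b$ replaced by the line-diffusion operator $-\partial_s^2$ on the one-dimensional filament $\Gamma_\fil$ and the $J_0$-weighted surface integral replaced by the arc-length integral. For the uniform bound~(\ref{l:A-normbdd}) I would pair $\vec v\in\RR^{N_2}$ and $\vec w\in\RR^{N_1}$ with the functions $v(s):=\sum_j v_j \Theta_j(s)$ and $w(s):=\sum_i w_i \Theta_i(s)$ in $L^2(\Gamma_\fil)$, interchange summation and integration to rewrite the bilinear form as $\int_{\Gamma_\fil} f(\vkappa)\,v\,w\,ds$, apply H\"older's inequality, and use $L^2(\Gamma_\fil)$-orthonormality of the $\Theta_k$ to identify $\|v\|_{L^2(\Gamma_\fil)}=\|\vec v\|_{\ell^2}$ and similarly for $w$. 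Admissibility of $\Gamma_\fil\in\cG_{K,\ell}^\fil$ controls $\vkappa$ in $L^\infty$, so by smoothness of $f$ the prefactor $\|f(\vkappa)\|_{L^\infty(\Gamma_\fil)}$ is bounded by a constant depending only on $K$ and $\ell$.

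For the refined bound (\ref{l:A-normbdd01}) with $\Sigma_1=\Sigma_{\fil,0}(r_\fil)$ and $\Sigma_2=\Sigma_{\fil,1}(r_\fil)$, the key is to exploit the asymptotic spectral gap between the two index sets. Writing $-\partial_s^2\Theta_i=\beta_i\Theta_i$ and integrating by parts twice on the closed curve $\Gamma_\fil$ yields the identity
\[
    (\beta_i-\beta_j)A_{ij} = -\int_{\Gamma_\fil}\Theta_i\Bigl[\bigl(\partial_s^2 f(\vkappa)\bigr)\Theta_j + 2\bigl(\partial_s f(\vkappa)\bigr)\bigl(\partial_s\Theta_j\bigr)\Bigr]\,ds.
\]
The $W^{2,\infty}(S)$ control on $\vkappa$ (by admissibility), combined with smoothness of $f$, bounds $\partial_s f(\vkappa)$ and $\partial_s^2 f(\vkappa)$ in $L^\infty$ uniformly in $\eps$. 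Cauchy--Schwarz together with $\|\partial_s\Theta_j\|_{L^2(\Gamma_\fil)}=\sqrt{\beta_j}$ then gives $(\beta_i-\beta_j)|A_{ij}|\le C(1+\sqrt{\beta_j})$. From (\ref{e:SigmaPr}) and Assumption~\ref{CS-Assum:Kernel} ($\lambda_{\fil,0}>0$ and $\lambda_{\fil,1}=0$) we have $\beta_i\ge \alpha\eps^{-2}$ for $i\in\Sigma_{\fil,0}(r_\fil)$ and $\beta_j\le \alpha\eps^{r_\fil/4-2}$ for $j\in\Sigma_{\fil,1}(r_\fil)$ with $\alpha>0$ independent of $\eps$; consequently $\beta_i-\beta_j\gtrsim \eps^{-2}$ and $\sqrt{\beta_j}\lesssim\eps^{r_\fil/8-1}$, producing the entrywise bound $|A_{ij}|\le C\eps^{1+r_\fil/8}$.

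The final step promotes this $\ell^\infty$ estimate to an operator-norm estimate via Lemma~\ref{l-MCor} applied with ambient dimension $d=3$ and parameter $r_\fil$, which converts an $\ell^\infty$ bound into an $\ell^2_*$ bound at the cost of a factor $\eps^{-(1-r_\fil/4)}$. Assembling,
\[
    \|A\|_{\RR^{N_2}\to\RR^{N_1}} \le C\eps^{-(1-r_\fil/4)}\cdot\eps^{1+r_\fil/8} = C\eps^{3r_\fil/8},
\]
which is (\ref{l:A-normbdd01}) with $s=\tfrac{3}{8}r_\fil$. The principal technical obstacle lies in the second step: verifying that admissibility of $\Gamma_\fil$ supplies derivatives of $\vkappa$ in $L^\infty$ uniformly in $\eps$ (so that the two-derivative integration by parts closes), and carefully tracking the two-sided asymptotic separation between $\Sigma_{\fil,0}(r_\fil)$ and $\Sigma_{\fil,1}(r_\fil)$ to capture the $\eps^{-2}$ gain in the denominator. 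The uniform bound is then an essentially formal H\"older computation with no $\eps$-dependent bookkeeping.
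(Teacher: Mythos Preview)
Your proposal is correct and follows essentially the same approach as the paper, which simply states that the proof follows that of Lemma~\ref{l:bdM0} and omits the details. You have faithfully carried out the adaptation: the uniform bound via H\"older and $L^2(\Gamma_\fil)$-orthonormality of the $\Theta_k$, the refined entrywise bound via two integrations by parts exploiting the $\beta_i$--$\beta_j$ spectral gap between $\Sigma_{\fil,0}(r_\fil)$ and $\Sigma_{\fil,1}(r_\fil)$, and the final conversion from $\ell^\infty$ to operator norm via Lemma~\ref{l-MCor} with $d=3$, yielding exactly $s=3r_\fil/8$.
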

\begin{proof} The proof follows that of Lemma~\ref{l:bdM0} and is omitted.
\end{proof}
We have established the following result.
\begin{prop}\label{PP-TheoCor}
Fix  a set $\cG^\fil_{K,\ell}$ of admissible codimension two hypersurfaces,  and let $r_\fil$ be as in (\ref{e:SigmaPr}). Then there exists $\gamma=\gamma(r_\fil)>1$ such that for all
$\eps$ sufficiently small, the eigenvalues of $M$, defined in (\ref{PP-Def:M}) are given, to $O(\eps^\gamma)$, where
$$\gamma=1 + \frac{3r_\fil}{8}>1,$$ 
by its diagonal elements as indicated in (\ref{e:pr-M00-asymp}). In particular the pearling eigenvalues are given to leading
order by the diagonal elements of $M^{0,0}$ given in (\ref{e:pr-M00-asymp}).
\end{prop}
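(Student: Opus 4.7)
The proof should mirror the structure of Proposition~\ref{PB-TheoCor} for the bilayer case: decompose $M$ into its diagonal entries plus off-diagonal remainder, bound the operator norm of the remainder in the $l^2_*$ norm, and then invoke Weyl's inequality for the self-adjoint matrix $M$ to conclude that its eigenvalues are $O(\eps^\gamma)$ perturbations of the diagonal entries. The key point is that $M$ inherits self-adjointness from $\LL_\fil$ and the orthonormality (up to exponentially small terms) of the tensor basis $\{\spsi_{\fil,I(j)}\Theta_j\}$, so the spectral perturbation bound reduces to controlling $\|M - M^0_{\rm diag}\|_{l^2_*}$.

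First, I would use the split $M = M^0 + \eps^q\tilde M$ with $q=2$ from \eqref{PP-def:M}, where the entries of $\tilde M$ are uniformly bounded in $l^\infty$, so Lemma~\ref{l-MCor} (applied with $d=3$, $r=r_\fil$) gives $\|\eps^2\tilde M\|_{l^2_*} = O(\eps^{2-q_*})$ with $q_* = (1-r_\fil/4)$. Since $2-q_* = 1 + r_\fil/4 > 1 + 3r_\fil/8$, this contribution is subdominant. Next, for the off-diagonal sub-blocks of $M^0$, I would apply Proposition~\ref{p:M0pr} entry-by-entry. Inside the diagonal sub-blocks $M^{0,0}$ and $M^{1,1}$ for $j\neq k$ the leading off-diagonal contribution is of the form $-\eps^2\int_{\Gamma_\fil} f(\vkappa)\Theta_j\Theta_k\,ds$, so the first bound in Lemma~\ref{l:prM0} gives operator norm $O(\eps^2)$, with remainders of size $O(\eps^{5/2})$ that become $O(\eps^{5/2-q_*}) = O(\eps^{3/2+r_\fil/4})$ after Lemma~\ref{l-MCor}; both exponents exceed $1 + 3r_\fil/8$.

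The binding constraint comes from the cross-block $M^{0,1}$, whose leading term is $-\eps S_{\fil,2}\int_{\Gamma_\fil}\vkappa\,\Theta_j\Theta_k\,ds$. Here the two index sets $\Sigma_{\fil,0}(r_\fil)$ and $\Sigma_{\fil,1}(r_\fil)$ are disjoint with an $O(\eps^{-2})$ spectral gap in $\beta_n$, so the refined second bound of Lemma~\ref{l:prM0} applies and yields operator norm $O(\eps^{1+3r_\fil/8})$. The $\cL_{\fil;jk}$ piece in \eqref{e:pr-M01} with $I(j)\neq I(k)$ contributes at size $\eps^{3/2}\cdot\eps^{3r_\fil/8} = O(\eps^{3/2+3r_\fil/8})$, which is strictly smaller. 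Collecting all these bounds yields
\begin{equation*}
\|M - M^0_{\rm diag}\|_{l^2_*} \leq C\eps^{\gamma},\qquad \gamma = 1 + \tfrac{3r_\fil}{8}.
\end{equation*}

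Finally, since $M$ and $M^0_{\rm diag}$ are self-adjoint, Weyl's inequality (equivalently, the Bauer--Fike theorem for Hermitian matrices) applied to the ordered eigenvalues gives $|\lambda_k(M) - (M^0_{\rm diag})_{kk}| \leq \|M - M^0_{\rm diag}\|_{l^2_*} = O(\eps^\gamma)$, with the labelling $k \in \Sigma_{\fil,0}(r_\fil) \cup \Sigma_{\fil,1}(r_\fil)$ inherited from the block structure. The main technical obstacle is the cross-block estimate, where a naive bound would only give $O(\eps^{1+s})$ for an arbitrary $s$; it is essential to exploit the spectral gap between $\Sigma_{\fil,0}(r_\fil)$ and $\Sigma_{\fil,1}(r_\fil)$, as encoded in the refined second part of Lemma~\ref{l:prM0}, in order to obtain the sharper exponent $\gamma > 1$ that guarantees the diagonal entries indeed dominate the perturbation.
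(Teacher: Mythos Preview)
Your approach is the same as the paper's: combine Lemma~\ref{l-MCor} for the unstructured tail $\eps^2\tilde M$, Lemma~\ref{l:prM0} for the $\Theta_j\Theta_k$-factored off-diagonal entries supplied by Proposition~\ref{p:M0pr}, and finish with a Hermitian spectral perturbation bound. The paper's own proof is essentially the one-line version of exactly this.

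There is, however, an arithmetic slip that breaks your exponent bookkeeping. You claim $2-q_* = 1+r_\fil/4 > 1+3r_\fil/8$, but in fact $r_\fil/4 = 2r_\fil/8 < 3r_\fil/8$, so the inequality goes the other way. Consequently your crude $l^\infty$ estimate on $\eps^2\tilde M$ via Lemma~\ref{l-MCor} delivers only $O(\eps^{1+r_\fil/4})$, which on your own accounting would become the binding constraint and produce $\gamma = 1+r_\fil/4$ rather than the stated $\gamma = 1+3r_\fil/8$. This still gives $\gamma>1$, so the qualitative conclusion---that the diagonal entries determine the eigenvalues to leading order---survives; but the precise exponent in the Proposition does not follow from the argument as you wrote it. To recover the sharper exponent one must either take $q$ larger in the split \eqref{PP-def:M}, so that the $l^\infty$ tail bound is correspondingly finer, or argue that the higher-order $\LL_{\fil,i}$ contributions inherit the $\int_{\Gamma_\fil} f(\vkappa)\Theta_j\Theta_k\,ds$ structure so that Lemma~\ref{l:prM0} (rather than the cruder Lemma~\ref{l-MCor}) applies to them directly.
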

\begin{proof}
 Lemmas \ref{l:prM0} and \ref{l-MCor}, in conjunction with the estimates in Proposition\,\ref{p:M0pr} establish an $O(\eps^\gamma)$ bound on the operator
norm of the off-diagonal components of $M$.
\end{proof}

\subsection{Bounds on the off-diagonal operators}\label{PP-sec:BoundOffDiag}
The off-diagonal operator~$B=\Pi\LL_\fil\tilde\Pi$, is defined in~(\ref{def-BreakL-fil}). Recalling the decomposition (\ref{e:PP-mbbL}) of~$\LL_\fil$ into its dominant part~$\cL_\fil^2$ and asymptotically small, relatively bounded perturbations, our first step is to bound the dominant part,~$B_0=\Pi\cL_\fil^2\tilde\Pi$, of~$B$. Since, $B_0$, and $B_0^T$ enjoy the same bounds, for  simplicity we address the latter.

\begin{corol}\label{PP-equivnorm}
Fix  a set $\cG^\fil_{K,\ell}$ of admissible interfaces, then for each $m\in \NN_+$ there exists $C>0$ such that for each $\Gamma_\fil\in\cG^\fil_{K,\ell}$ and all $\vv\in\RR^{N_\fil}$ we have
\beq \label{EN-z-Pr}
\|\Delta_z^{m} v\|_{L^2(\Omega)} + \| \eps^{2m}\partial_s^m v\|_{L^2(\Omega)} \leq C \|\vv\|_{l^2},
\eeq
where $v:=\sum_{i\in\Sigma_{\fil,0}}\Theta_iv_i\spsi_{\fil,0} \in X_\fil(\Gamma_\fil)$
\end{corol}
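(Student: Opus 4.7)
The plan is to mirror the proof of Corollary \ref{PB-equivnorm}, making the adaptations dictated by the codimension-two geometry and using Lemma \ref{l:prM0} in place of Lemma \ref{l:bdM0}. The two terms are handled independently: the first via a direct quadratic form estimate in the whiskered coordinates, the second by exploiting the defining constraint on the index set $\Sigma_{\fil,0}(r_\fil)$.

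For the first term, since each $\spsi_{\fil,0}$ is supported in the reach $\Gamma_\fil^\ell$, I would change to whiskered coordinates and expand
\begin{equation*}
\|\Delta_z^{m} v\|_{L^2(\Omega)}^2 = \sum_{i,k\in\Sigma_{\fil,0}(r_\fil)} v_i v_k \int_{\Gamma_\fil} f(\vkappa)(s)\, \Theta_i(s)\Theta_k(s)\, ds,
\end{equation*}
where
\begin{equation*}
f(\vkappa)(s) := \eps^2\!\int_0^{\ell/\eps}\!\big(\Delta_z^m \spsi_{\fil,0}(\cdot,s)\big)^2\, \tJ_\fil(s,z)\, dz.
\end{equation*}
The scalings $\spsi_{\fil,0}\sim \eps^{-1}\hpsi_{\fil,0}$ and $\tJ_\fil=O(1)$ exactly balance the $\eps^2$ prefactor arising from $J_\fil=\eps^2\tJ_\fil$, and since $\vkappa\in W^{2,\infty}(S)$ with uniform bounds over $\cG^\fil_{K,\ell}$, the function $f(\vkappa)$ is smooth in $\vkappa$ with a uniform $L^\infty(\Gamma_\fil)$ bound independent of $\eps$. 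Applying Lemma \ref{l:prM0} to the matrix with entries $\int_{\Gamma_\fil} f(\vkappa)\Theta_i\Theta_k\, ds$ then yields $\|\Delta_z^{m} v\|_{L^2(\Omega)}^2 \leq C\|\vv\|_{l^2}^2$.

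For the second term I would exploit the fact that the $\Theta_i$ are eigenfunctions of the line-diffusion operator $-\partial_s^2$ with eigenvalue $\beta_{\fil,i}$, so that $v$ is an eigenvector-expansion of $v$ in this basis and
\begin{equation*}
\eps^{2m}\partial_s^{2m} v = (-1)^m \sum_{i\in\Sigma_{\fil,0}(r_\fil)} (\eps^{2}\beta_{\fil,i})^m v_i\, \Theta_i\, \spsi_{\fil,0}.
\end{equation*}
The definition (\ref{e:SigmaPr}) of $\Sigma_{\fil,0}(r_\fil)$ gives $|\lambda_{\fil,0}-\eps^2\beta_{\fil,i}|\leq \eps^{r_\fil/2}$, so $(\eps^2\beta_{\fil,i})^m = \lambda_{\fil,0}^m + O(\eps^{r_\fil/2})$ uniformly in $i$. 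Since the family $\{\Theta_i\spsi_{\fil,0}\}$ is orthonormal in $L^2(\Omega)$ up to exponentially small errors (as recorded just before Assumption \ref{CS-Assum:coercive}), Parseval yields $\|\eps^{2m}\partial_s^{2m} v\|_{L^2(\Omega)}^2 \leq C\|\vv\|_{l^2}^2$.

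The only real subtlety is the first term: the $s$-dependence of $\spsi_{\fil,0}$ through $J_\fil^{-1/2}$ means that $\Delta_z^m\spsi_{\fil,0}$ is genuinely $s$-dependent and cannot be factored out of the $s$-integral. That is precisely why Lemma \ref{l:prM0}, rather than a naive orthonormality argument, is needed; once that lemma is invoked the estimate is automatic. The second term, by contrast, is essentially a finite-dimensional calculation once the $\Sigma_{\fil,0}(r_\fil)$ bound is used.
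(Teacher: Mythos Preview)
Your proposal is correct and follows essentially the same route as the paper: both arguments bound the $\Delta_z^m$-term by writing its $L^2$ norm as a quadratic form with entries $\int_{\Gamma_\fil} f(\vkappa)\Theta_i\Theta_k\,ds$ and invoking Lemma~\ref{l:prM0}, and both handle the $\partial_s$-term by using that $\eps^2\beta_{\fil,i}=\lambda_{\fil,0}+O(\eps^{r_\fil/2})$ on $\Sigma_{\fil,0}(r_\fil)$ together with the (approximate) orthonormality of $\{\Theta_i\spsi_{\fil,0}\}$. The only cosmetic difference is that the paper phrases the second step as another application of Lemma~\ref{l:prM0} (with $f(\vkappa)=\lambda_{\fil,0}^m(\spsi_{\fil,0},\spsi_{\fil,0})_{\tJ_\fil}$), while you invoke Parseval directly; these are equivalent.
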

\begin{proof} The proof is similar to that of \ref{PB-equivnorm}.
The bound on the first term on the left-hand side of  (\ref{EN-z-Pr}) follows from Lemma \ref{l:prM0} with the choice of $f$ given by
 $f(\vkappa) = (\Delta_z^{m}\spsi_{\fil,j},\Delta_z^{m}\spsi_{\fil,k})_\tJ$.
For the second term we observe from the form of $v$  that
\beq
 \eps^{2m}\partial_s^m v = \sum_{i\in\Sigma_{\fil}} \eps^{2m}\beta_i^m \spsi_{\fil,I(j)} \Theta_i v_i,
 \eeq
 and hence $\eps^{2m}\partial_s^m v$ is localized on $\Gamma_\fil$ and indeed lies in $X_\fil$.
 From the form, (\ref{e:SigmaPr}), of $\Sigma_{\fil,0}$ and~$\Sigma_{\fil,1}$ it follows that $\eps^{2m}\beta_i^m=\lambda_{\fil,I(j)}^m+O(\eps^{r_{\fil}/2})$.
 Applying  Lemma\,\ref{l:prM0} with the choice of $f$ given by
 $f(\vkappa) = \lambda_{\fil,I(j)}^m (\spsi_{\fil,j},\spsi_{\fil,k})_\tJ$, and using the orthonormality of the line diffusion's eigenmodes in the $L^2(\Gamma_\fil)$ norm yields the stated result.
\end{proof}

We use this to establish the following bound.

\begin{prop}\label{PP-prop:bound}
Fix $\Gamma_\fil\in\cG_{k,\ell}^\fil$, then for $\eps>0$ sufficiently small there exists a constant $C>0$ such that
\beq
\label{e:tPiLL-bnd-pr}
\| \tilde\Pi\LL_\fil v\|_{L^2(\Omega)} \leq C \eps \|\vv\|_{l^2},
\eeq
for all $\vv\in\RR^{N_{\fil,0}}$ with corresponding $v\in X_\fil$ given by (\ref{e:Xfil-expand}).
\end{prop}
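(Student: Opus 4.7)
My plan is to mirror the proof of Proposition~\ref{PB-prop:bound}, adapted to the codimension-two geometry. For $v\in X_\fil$ of the form~(\ref{e:Xfil-expand}), I would first insert the expansion~(\ref{e:psiJ-exp-fil}) of the scaled eigenfunctions to write
\[
v = \eps^{-1}\bigl(\ov_0(s)\hpsi_{\fil,0} + \ov_1(s)\hpsi_{\fil,1}\bigr)\Bigl(1 + \tfrac{1}{2}\eps\, z\cdot\vkappa(s) + O(\eps^2)\Bigr),
\]
and invoke the expansion~(\ref{e:PP-mbbL}) of $\LL_\fil$. The dominant contribution $\cL_\fil^2(\ov_0\hpsi_{\fil,0}+\ov_1\hpsi_{\fil,1})$ already lies in $X_\fil$, since $\cL_\fil$ acts on each basis tensor $\hpsi_{\fil,I(k)}\Theta_k$ by the scalar $\lambda_{\fil,I(k)}-\eps^2\beta_k = O(\eps^{1/2})$, and is therefore annihilated by $\tilde\Pi$. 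The residual $\tilde\Pi\cL_\fil^2 v$ arises only from terms in which at least one $\partial_z$ or $\partial_s$ derivative lands on the $J_\fil^{-1/2}$ correction, or from the $-\eps D_z$ and $\eps^2(\partial_G^2-\partial_s^2)$ cross-terms in~(\ref{e:PrClosedLap})--(\ref{e:pG-expand}), each contributing an extra factor of $\eps$.

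Collecting these residual contributions, I expect an expression of the form
\[
\tilde\Pi \cL_\fil^2 v = \eps^{1/2}\bigl(Q_0\ov_0 + Q_1\ov_1\bigr),
\]
where each $Q_j = Q_{j,0}+\eps Q_{j,1}+\eps^2 Q_{j,2}+\eps^3 Q_{j,3}$ is a differential operator of order $k$ in $\partial_s$ at order $\eps^k$, with coefficients smooth in $(s,z)$, exponentially decaying in $z$, and uniformly bounded in $L^\infty(\Gamma_\fil)$ on the admissible class. Taking the $L^2(\Omega)$ norm, changing to whiskered coordinates via $J_\fil = \eps^2\tJ_\fil$, and integrating out the $z$-dependence against the exponentially localized $\hpsi_{\fil,j}$ reduces the estimate to
\[
\|\tilde\Pi \cL_\fil^2 v\|_{L^2(\Omega)}^2 \le C\eps^2\int_{\Gamma_\fil}\bigl(\oQ_0 \ov_0 + \oQ_1 \ov_1\bigr)^2\, ds,
\]
with $\oQ_j$ a third-order differential operator in $\partial_s$ whose order-$\eps^k$ coefficients are uniformly $L^\infty(\Gamma_\fil)$-bounded. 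One-dimensional elliptic regularity then supplies a uniform-in-$\eps$ operator-norm bound on $\oQ_{j,k}(1-\partial_s^2)^{-k/2}$ acting on $L^2(\Gamma_\fil)$, while Corollary~\ref{PP-equivnorm} controls $\eps^k(1-\partial_s^2)^{k/2}\ov_j$ by $\|\vv\|_{l^2}$, yielding $\|\tilde\Pi \cL_\fil^2 v\|_{L^2(\Omega)} \le C\eps\|\vv\|_{l^2}$.

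Finally, to extend from $\cL_\fil^2$ to the full $\LL_\fil$, I would use the resolvent trick of the bilayer case: pick $\lambda_*\in\rho(\cL_\fil^2)$ and write
\[
\LL_\fil = (\cL_\fil^2-\lambda_*) + \eps\tilde\LL_\fil(\cL_\fil^2-\lambda_*)^{-1}(\cL_\fil^2-\lambda_*) + \lambda_*,
\]
with $\tilde\LL_\fil := \eps^{-1}(\LL_\fil-\cL_\fil^2)$ relatively compact with respect to $\cL_\fil^2$, so that $\tilde\LL_\fil(\cL_\fil^2-\lambda_*)^{-1}$ is $L^2$-bounded uniformly in $\eps$. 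Acting on $v\in X_\fil$, projecting with $\tilde\Pi$ and using $\tilde\Pi\Pi=0$, the $\tilde\Pi\cL_\fil^2 v$ contribution is controlled by the previous step, while $\Pi\cL_\fil^2 v$ is uniformly bounded by Proposition~\ref{PP-TheoCor} together with the $O(\eps)$ scaling of the diagonal of $M$ from~(\ref{e:pr-M00-asymp}), giving~(\ref{e:tPiLL-bnd-pr}). The main obstacle I anticipate is the codimension-two bookkeeping: the vector-valued $\vz\cdot\vkappa$ structure of $J_\fil^{-1/2}$ and the additional $(R,\theta)$ couplings in $\Delta_z$ and $\partial_G^2$ produce more cross-terms than in the codim-one argument, and verifying that the residual coefficients remain exponentially $z$-localized with uniformly bounded $s$-dependence on the admissible class is the delicate part of the calculation.
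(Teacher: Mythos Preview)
Your proposal is correct and follows essentially the same route as the paper: expand $v$ via~(\ref{e:psiJ-exp-fil}), observe that the leading $\cL_\fil^2$-contribution lies in $X_\fil$ and is killed by $\tilde\Pi$, collect the residual into $\partial_s$-differential operators $Q_j$ with exponentially $z$-localized coefficients, integrate out $z$ using $J_\fil=\eps^2\tJ_\fil$, bound the resulting $\oQ_{j,k}$ by elliptic regularity together with the $\eps^k(1-\partial_s^2)^{k/2}\ov_j$ control, and then extend from $\cL_\fil^2$ to $\LL_\fil$ via the same resolvent trick as in Proposition~\ref{PB-prop:bound}. The only slip is the prefactor in your display $\tilde\Pi\cL_\fil^2 v = \eps^{1/2}(Q_0\ov_0+Q_1\ov_1)$: in codimension two the scaling $\spsi_{\fil,k}=\eps^{-1}\hpsi_{\fil,k}(1+O(\eps))$ yields an $O(1)$ residual (no $\eps^{1/2}$), consistent with the $\eps^2$ from the Jacobian producing exactly the $C\eps^2$ in your subsequent norm estimate.
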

\begin{proof}

Any $v \in X_\fil$ has the form (\ref{e:Xfil-expand}), which, using the Taylor expansion of~$J_\fil$, admits the expansion
\beq
v = \eps^{-1}\left( \ov_0(s)\hpsi_{\fil,0} - \ov_1(s)\hpsi_{\fil,1}\right)\left(1 + \frac12 \eps z\cdot\vkappa +  O(\eps^2)\right).
\eeq
Since $\eps^{-1}\cL_\fil^2(\ov_0\hpsi_{\fil,0} + \ov_1\hpsi_{\fil,1})\in X_\fil$, it lies in the kernel of $\tilde\Pi$, and only higher
order terms remain. Consequently we may write
\beq \label{e:Pi0cLv-pr}
\tilde\Pi_0\cL_b^2 v = \left(Q_0\ov+Q_1\ov_1\right),
\eeq
where the differential operators $Q_j$ admit an expansion of the form
 \beq \label{e:Pj-form-PR}
 Q_j:= Q_{j,0} + \eps Q_{j,1}+\eps^2 Q_{j,2} +\eps^3 Q_{j,3},
 \eeq
and each $Q_{j,k}$ is a $k$'th order differential operator in $\partial_s$ with coefficients
that are smooth and decay exponentially in $z$ and have uniform $L^\infty(\Gamma_\fil)$  bounds in $s$ that are independent of
$\eps>0$ sufficiently small. Taking the $L^2(\Omega)$ norm of (\ref{e:Pi0cLv-pr}), we transform to the local variables and integrate out
the $z$ dependence, the result is an expression of the form
\beq
 \|\tilde\Pi\cL_b^2 v \|_{L^2(\Omega)}^2 =  \eps^2 \int_{\Gamma_\fil}\left( \oQ_0\ov_0+\oQ_1\ov_1\right)^2\, ds,
 \eeq
 where the $\oQ_j$ are third order differential operators of the form (\ref{e:Pj-form-PR}) with $L^\infty(\Gamma_\fil)$ coefficients that only depend
 upon $s$.
In particular we deduce that
\beq
\| \eps^{k} \oQ_{j,k}\ov_j \|_{L^2(\Gamma_\fil)} \leq \|Q_{j,k}(1-\partial_s)^{-k/2}\|_{L_*^2(\Omega)} \|\eps^{k} (1-\partial_s)^{k/2} \ov \|_{L^2(\Gamma_\fil)},
\eeq
where the norm $\|\cdot\|_{L^2_*(\Omega)}$ denotes the induced $L^2\mapsto L^2$ operator norm on $L^2(\Gamma_\fil)$. From
classic elliptic regularity theory, the operator norm may be bounded, independent of $\eps>0$ sufficiently small,
while Lemma \ref{l-MCor} implies the existence of $C>0$ such that
\beq
\eps^{k}\|(1-\partial_s)^{k/2}\ov\|_{L^2(\Gamma_\fil)} \leq C \|\vv\|_{l^2},
\eeq
for $k=0,1, 2, 3.$
In particular we deduce the existence of $C>0$, chosen independent of $\eps>0$ sufficiently small that
\beq
\label{e:cL2-bnd-pr}
 \|\tilde{\Pi} \cL^2_b v \|_{L^2(\Omega)} \leq C \eps \|\vv\|_{l^2}.
\eeq

The extension of this bound to the full operator follows the steps at the end of the proof of Proposition~\ref{PB-prop:bound}. %
The result, (\ref{e:tPiLL-bnd-pr}) follows.
\end{proof}

\subsection{Localization of the geometric and pearling eigenvalues of $\LL_\fil$}\label{fil-sec:Spectrum}

The estimates from the previous section permit the localization of the small eigenvalues of $\LL_\fil$ as perturbations of
pearling and geometric eigenvalues of $M$, the estimates are similar to those in section \ref{PB-sec:Spectrum}, with the
coercivity afforded by Assumption\,\ref{CS-Assum:coercive}. This leads us to the following theorem,

\begin{theo}\label{PP-PearlingCond}
For space dimension $d=3$, fix a class $\cG_{K,\ell}^\fil$ of admissible filaments, and let $\LL_\fil$ be the second variation of
$\cF$ about a  filament morphology $u_\fil(\cdot; \Gamma_\fil, \hlam)$ given by (\ref{e:PR}). Fix $r=r_f^*:=\frac{8}{11}$ in the
definition (\ref{e:SigmaPr}) of $\Sigma_{\fil,0}$ and $\Sigma_{\fil,1}$.  Then there exists $C>0$ such that for all $\eps>0$ sufficiently small,
the set $\sigma(\LL_\fil)\cap (-\infty, C\eps^{r_\fil^*})$ consists of the union of pearling eigenvalues
$ \bigl\{\Lambda_{\fil;0,k}\,\bigl |\, k\in\Sigma_{\fil,0}(r_\fil^*)\bigr\}$
and the geometric eigenvalues $\bigl\{\Lambda_{\fil;1,k}\,\bigl | \, k\in\Sigma_{\fil,1}(r_\fil^*)\bigr\}$,
which satisfy the asymptotic expansions
\begin{align}
\label{e:Pevs-fil}
	\Lambda_{\fil;0,k} &= \frac{\eps}{\norm{\hpsi_{\fil,0}}_{L_R}^2}\left(P_{f,k,0}^2-\hlam S_\fil-\eta_d{\left(\norm{\hpsi_{\fil,0}'}_{L_R}^2+\lambda_{\fil,0}\norm{\hpsi_{\fil,0}}_{L_R}^2\right)}\right)+O\!\left(\eps^{2-r_\fil^*}\right), & {\rm for\,} k\in\Sigma_{\fil,0}(r_\fil^*),\\
\label{e:Gevs-fil}
		\Lambda_{\fil;1,k} &= \frac{\eps}{\norm{\hpsi_{\fil,0}}_{L_R}^2} P_{\fil,k,1}^2 + O\!\left(\eps^{2-r_\fil^*}\right), & {\rm for}\, k\in\Sigma_{\fil,1}(r_\fil^*),
\end{align}
where all quantities are as defined in Proposition \ref{p:M0pr}.  Moreover,
the associated codimension-two filament morphology is stable with respect to the pearling eigenvalues, if
\beq\label{fil-eq:PearlingStableCond}
    \hlam S_\fil +\eta_d{\left(\norm{\hpsi_{\fil,0}'}_{L_R}^2+\lambda_{\fil,0}\norm{\hpsi_{\fil,0}}_{L_R}^2\right)}<0,
\eeq
and is pearling unstable if this quantity is positive.
\end{theo}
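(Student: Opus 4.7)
The plan is to mirror the proof of Theorem \ref{PB-PearlingCond} given in section \ref{PB-sec:Spectrum}, replacing the Laplace--Beltrami operator by the line diffusion operator $\partial_s^2$ and using the filament-specific estimates already assembled in Propositions \ref{p:M0pr}, \ref{PP-TheoCor}, \ref{PP-prop:bound} together with Assumption \ref{CS-Assum:coercive}. First I would fix $\Lambda\in\sigma(\LL_\fil)\cap(-\infty,C\eps^{r_\fil})$ and observe that by Assumption \ref{CS-Assum:coercive} the restricted operator $\LL^\perp$ satisfies $\sigma(\LL^\perp)\subset(\rho\eps^{r_\fil},\infty)$, so that $\LL^\perp-\Lambda$ is boundedly invertible with resolvent norm at most $|\Lambda-\rho\eps^{r_\fil}|^{-1}$. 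Decomposing the eigenfunction $\Psi_\fil = v_\fil + \Psi_\fil^\perp$ along the splitting (\ref{PP-BreakL}), the $\tilde\Pi$-projected equation can be solved for $\Psi_\fil^\perp$, yielding the Schur-type reduction
\beq
(M-\Lambda)v_\fil = B(\LL^\perp-\Lambda)^{-1}B^T v_\fil.
\eeq

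Next I would estimate the right-hand side by combining Proposition \ref{PP-prop:bound}, which gives the operator bound $\|B^T\vv\|_{L^2}\leq C\eps\|\vv\|_{l^2}$ (and symmetrically for $B$), with the resolvent bound above. The same self-adjoint lower bound $\|(M-\Lambda)v_\fil\|_{l^2}\geq\mathrm{dist}(\sigma(M),\Lambda)\|v_\fil\|_{l^2}$ used in the bilayer case then yields
\beq
\mathrm{dist}(\sigma(M),\Lambda)\leq \frac{c\eps^2}{|\Lambda-\rho\eps^{r_\fil}|}\leq c\eps^{2-r_\fil},
\qquad \Lambda<\tfrac12\rho\eps^{r_\fil}.
\eeq
By Proposition \ref{PP-TheoCor}, the eigenvalues of $M$ are in turn $O(\eps^\gamma)$-close, with $\gamma=1+\frac{3r_\fil}{8}$, to its diagonal entries, which are given by the expressions (\ref{e:pr-M00-asymp}) and (\ref{e:pr-M11-asymp}). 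A regular perturbation argument (inverting $M$ on a bounded spectral subspace) establishes the converse inclusion: every near-diagonal eigenvalue of $M$ perturbs to a nearby eigenvalue of $\LL_\fil$ with a $O(\eps^{2-r_\fil})$ correction.

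The remaining step is to optimize the overall error rate by balancing the two exponents $2-r_\fil$ from the Schur localization and $\gamma=1+\tfrac{3r_\fil}{8}$ from the off-diagonal matrix bound. Setting $2-r_\fil = 1+\tfrac{3r_\fil}{8}$ yields the sharp choice $r_\fil^* = 8/11$ stated in the theorem, producing a common error of order $\eps^{2-r_\fil^*}=\eps^{14/11}$. Dividing the diagonal expressions (\ref{e:pr-M00-asymp})--(\ref{e:pr-M11-asymp}) by $\|\hpsi_{\fil,0}\|_{L_R}^2$ (the leading-order normalization coming from the $L^2(\Omega)$-orthonormalization of the tensor-product basis of $X_\fil$) produces the asymptotic expansions (\ref{e:Pevs-fil})--(\ref{e:Gevs-fil}). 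The pearling stability criterion (\ref{fil-eq:PearlingStableCond}) is then read off by observing that $P_{f,k,0}^2\geq 0$, with equality attainable up to $O(\eps^r)$ for some $k\in\Sigma_{\fil,0}(r_\fil^*)$ by the density of $\{\eps^2\beta_{\fil,n}\}$ near $\lambda_{\fil,0}$, so that the sign of the smallest pearling eigenvalue is governed by the constant $-\hlam S_\fil-\eta_d(\|\hpsi_{\fil,0}'\|_{L_R}^2+\lambda_{\fil,0}\|\hpsi_{\fil,0}\|_{L_R}^2)$.

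The main technical obstacle, and the essential asymmetry with the bilayer case, is the reliance on Assumption \ref{CS-Assum:coercive}: without it, the resolvent bound used to invert $\LL^\perp-\Lambda$ is unavailable, and the Schur reduction above collapses. Granting this assumption, the only genuinely delicate point is the competition between the number $N_\fil$ of eigenmodes (which grows like $\eps^{-1+r_\fil/4}$ by Weyl's asymptotics in one dimension, tightening the constant in Lemma \ref{l-MCor}) and the size of the individual off-diagonal entries in Proposition \ref{p:M0pr}; this is exactly what fixes the exponent $\gamma=1+3r_\fil/8$ and ultimately forces the choice $r_\fil^*=8/11$ rather than the bilayer value $r_b^*=4\tfrac{d-1}{d+8}$.
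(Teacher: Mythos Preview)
Your proposal is correct and follows essentially the same route as the paper: the text of section \ref{fil-sec:Spectrum} simply points back to the bilayer localization argument of section \ref{PB-sec:Spectrum}, invoking Assumption \ref{CS-Assum:coercive} for the coercivity of $\LL^\perp$, Proposition \ref{PP-prop:bound} for the $O(\eps)$ bound on $B,B^T$, and Proposition \ref{PP-TheoCor} for the diagonal approximation of $M$; the balancing $2-r_\fil = 1+3r_\fil/8$ that you carry out is exactly what yields the stated $r_\fil^*=8/11$. One small remark: your closing comment contrasting $r_\fil^*$ with ``the bilayer value $r_b^*=4\tfrac{d-1}{d+8}$'' may mislead, since for $d=3$ the bilayer formula gives the identical value $8/11$ and the exponent $\gamma=1+3r/8$ coincides with the bilayer $\gamma$ of Proposition \ref{PB-TheoCor} at $d=3$; the two optimizations are literally the same computation, not merely analogous.
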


\section{Conclusion}

Within the strong functionalization scaling of the FCH free energy, we have analyzed the stability of the bilayer and filament morphologies obtained by
the dressing procedure described in section \ref{sec:coordinates} applied to admissible codimension one interfaces and codimension two hypersurfaces.
The dressing procedure allows the arbitrary assignment of the spatially constant far-field chemical potential. Modulo the filament Assumptions\,
\ref{CS-Assum:Kernel} and \ref{CS-Assum:coercive}, we established that the pearling stability of these morphologies is independent of the
choice of the admissible codimensional one interface or codimension two hypersurface. Indeed, at leading order the pearling spectrum is
independent of the geometry hypersurface, and can be characterized in terms of the far-field chemical potential, the system parameters
$\eta_1$ and $\eta_2$, and the shape of double well potential $W$.

Admissible surfaces need not be simply connected, the results apply if the underlying hypersurfaces have disjoint components, so long as they are sufficiently smooth
and the reaches of each component do not overlap. The compact nature of the construction of the codimension-one and codimension-two morphologies, suggests that they
can be additively combined to generate hybrid morphologies with a common far-field chemical potential.
More specifically, given a codimension-one interface $\Gamma_b\in\cG_{K,\ell}^b$ and a
codimension-two hypersurface $\Gamma_\fil\in\cG_{K,\ell}^\fil$ such that the intersection of the reaches, $\Gamma_b^{\ell}\cap \Gamma_\fil^{\ell}$, is empty, then we may
form the hybred morphology
\beq\label{def-upf}
u_{b,\fil}(x;\Gamma_b,\Gamma_\fil,\hlam) = u_b(x;\Gamma_b,\hlam)+u_\fil(x;\Gamma_\fil,\hlam) - \eps\Frac{\hlam}{\alpha_-^2}.
\eeq
For each $K,\ell$ these hybrid functions reside in $H^4(\Omega)$ and are quasi-equilibria solutions of (\ref{e:FCH-static}). The spectral characterizations of Theorems \ref{Thm:Main-bl} and \ref{Thm:Main-pr} may be applied independently to the codimension-one and codimension-two morphologies.

The extension of these results to the weak scaling of the FCH free energy is not immediate. The complexity for the strong scaling arises through the large number of asymptotically small pearling eigenvalues, which requires uniform bounds on their interaction mediated through the non-constant hypersurface curvatures. For the weak scaling the number of pearling eigenvalues is $O(1)$ and may indeed be empty. However the difficulty that arises is that the pearling eigenvalues are smaller,
generically scaling as $O(\eps^2)$, and the coupling with the interfacial curvatures occurs at leading order, rather than at second order as in the strong scaling.
For the weak scaling the large curvatures could induce, or inhibit, the onset of the pearling bifurcation, or the pearling may be localized to regions of high curvature.
The resolution of these issue requires additional investigation.

\newpage

\bibliographystyle{apalike}
\bibliography{NKbib}

\begin{thebibliography}{}

\bibitem[Andreussi et~al., 2012]{andreussi2012revised}
Andreussi, O., Dabo, I., and Marzari, N. (2012).
\newblock Revised self-consistent continuum solvation in electronic-structure
  calculations.
\newblock {\em The Journal of Chemical Physics}, 136(6):064102.

\bibitem[Cahn and Hilliard, 1958]{Cahn1958free}
Cahn, J.~W. and Hilliard, J.~E. (1958).
\newblock Free energy of a nonuniform system. i. interfacial free energy.
\newblock {\em The Journal of Chemical Physics}, 28(2):258--267.

\bibitem[Chen, 1994]{chen1994spectrum}
Chen, X. (1994).
\newblock Spectrum for the allen-chan, chan-hillard, and phase-field equations
  for generic interfaces.
\newblock {\em Communications in Partial Differential Equations},
  19(7-8):1371--1395.

\bibitem[Christlieb et~al., tted]{Christlieb2017Competition}
Christlieb, A., Kraitzman, N., and Promislow, K. (submitted).
\newblock Competition and complexity in amphiphilic polymer morphology.
\newblock {\em submitted}.

\bibitem[Dai and Promislow, 2013]{dai2013geometric}
Dai, S. and Promislow, K. (2013).
\newblock Geometric evolution of bilayers under the functionalized
  cahn--hilliard equation.
\newblock In {\em Proceedings of the Royal Society of London A: Mathematical,
  Physical and Engineering Sciences}, volume 469, page 20120505. The Royal
  Society.

\bibitem[Dai and Promislow, 2015]{DAI2015COMPETITIVE}
Dai, S. and Promislow, K. (2015).
\newblock Competitive geometric evolution of amphiphilic interfaces.
\newblock {\em SIAM Journal on Mathematical Analysis}, 47(1):347--380.

\bibitem[Doelman et~al., 2014]{Doelman2014meander}
Doelman, A., Hayrapetyan, G., Promislow, K., and Wetton, B. (2014).
\newblock Meander and pearling of single-curvature bilayer interfaces in the
  functionalized cahn--hilliard equation.
\newblock {\em SIAM Journal on Mathematical Analysis}, 46(6):3640--3677.

\bibitem[Gompper and Schick, 1990]{gompper1990correlation}
Gompper, G. and Schick, M. (1990).
\newblock Correlation between structural and interfacial properties of
  amphiphilic systems.
\newblock {\em Physical Review Letters}, 65(9):1116.

\bibitem[Hayrapetyan and Promislow, 2014]{HAYRAPETYAN2014SPECTRA}
Hayrapetyan, G. and Promislow, K. (2014).
\newblock Spectra of functionalized operators arising from hypersurfaces.
\newblock {\em Zeitschrift f{\"u}r Angewandte Mathematik und Physik}, pages
  1--32.

\bibitem[Hayrapetyan and Promislow, 2016]{HAYRAPETYAN2016Nonlinear}
Hayrapetyan, G. and Promislow, K. (2016).
\newblock Nonlinear stability of radial bilayers under the functionalized
  cahn-hilliard gradient flow.
\newblock {\em Preprint}.

\bibitem[Promislow and Wu, 2015]{PW-15}
Promislow, K. and Wu, Q. (2015).
\newblock Existence of pearled patterns in the planar functionalized
  cahn-hilliard equation.
\newblock {\em J. Differential Equations}, 259:3298--3343.

\bibitem[Scherlis et~al., 2006]{scherlis2006unified}
Scherlis, D.~A., Fattebert, J.-L., Gygi, F., Cococcioni, M., and Marzari, N.
  (2006).
\newblock A unified electrostatic and cavitation model for first-principles
  molecular dynamics in solution.
\newblock {\em The Journal of Chemical Physics}, 124(7):074103.

\bibitem[Teubner and Strey, 1987]{teubner1987origin}
Teubner, M. and Strey, R. (1987).
\newblock Origin of the scattering peak in microemulsions.
\newblock {\em The Journal of Chemical Physics}, 87(5):3195--3200.

\bibitem[Zhu et~al., 2009]{zhu2009tuning}
Zhu, J., Ferrer, N., and Hayward, R.~C. (2009).
\newblock Tuning the assembly of amphiphilic block copolymers through
  instabilities of solvent/water interfaces in the presence of aqueous
  surfactants.
\newblock {\em Soft Matter}, 5(12):2471--2478.

\bibitem[Zhu and Hayward, 2012]{zhu2012interfacial}
Zhu, J. and Hayward, R.~C. (2012).
\newblock Interfacial tension of evaporating emulsion droplets containing
  amphiphilic block copolymers: Effects of solvent and polymer composition.
\newblock {\em Journal of Colloid and Interface Science}, 365(1):275--279.

\end{thebibliography}
\end{document}